\newtheorem{remark}{Remark}
\newcommand{\normtwoinf}[1]{\|#1\|_{2,\infty}}
\newcommand{\normtwo}[1]{\left\|#1\right\|_{2}}
\newcommand{\normfro}[1]{\left\|#1\right\|_{F}}
\newcommand{\norminf}[1]{\left\|#1\right\|_{\infty}}
\newcommand{\nk}{n \times k}
\def\gap{{\sf gap}}
\def\hA{\widehat{A}}
\def\hV{\widehat{V}}
\def\hU{\widehat{U}}
\def\hX{\widehat{X}}
\def\hY{\widehat{Y}}
\def\orth{\mathbf{O}}
\def\reals{\mathbf{R}}
\def\sep{{\sf sep}}
\def\tU{\widetilde{U}}
\def\tV{\widetilde{V}}
\def\tW{\widetilde{W}}
\def\tE{\widetilde{E}}
\def\tY{\widetilde{Y}}
\def\tA{\widetilde{A}}
\def\tX{\widetilde{X}}
\def\tV{\widetilde{V}}
\newcommand{\emthin}{\thinspace\textemdash\thinspace}
\title{Uniform bounds for invariant subspace perturbations\thanks{Submitted to the editors \today.
\funding{This work was partially funded by the National Science Foundation under awards DMS-1830274 (AD) and DMS-1830247 (YS).}}}
\author{Anil Damle\thanks{Department of Computer Science, Cornell
    University, Ithaca, NY (\email{damle@cornell.edu}).}
\and Yuekai Sun\thanks{Department of Statistics, University of Michigan, Ann Arbor, MI
  (\email{yuekai@umich.edu}).}}
\begin{document}

\maketitle

\begin{abstract}
  For a fixed symmetric matrix $A$ and symmetric perturbation $E$ we develop purely deterministic bounds on how invariant subspaces of $A$ and $A+E$ can differ when measured by a suitable ``row-wise'' metric rather than via traditional measures of subspace distance. Understanding perturbations of invariant subspaces with respect to such metrics is becoming increasingly important across a wide variety of applications and therefore necessitates new theoretical developments. Under minimal assumptions we develop new bounds on subspace perturbations under the two-to-infinity matrix norm and show in what settings these row-wise differences in the invariant subspaces can be significantly smaller than the analogous two or Frobenius norm differences. We also demonstrate that the constitutive pieces of our bounds are necessary absent additional assumptions and, therefore, our results provide a natural starting point for further analysis of specific problems. Lastly, we briefly discuss extensions of our bounds to scenarios where $A$ and/or $E$ are non-normal matrices.
\end{abstract}

\begin{keywords}
  Invariant subspace perturbation theory 
\end{keywords}

\begin{AMS}
  15A18, 15A42, 65F99, 68Q25
\end{AMS}

\section{Introduction}

Given a matrix $A,$ it is natural to try and ``understand'' how properties of $A$ change after it has been perturbed in some way. Understanding can take many forms, as can the way we choose to perturb $A.$ In this work we are primarily concerned with additive perturbations and understanding how spectral properties of the matrix change\textemdash \emph{i.e.,} given a perturbation $E$ how do certain invariant subspaces of $A+E$ relate to those of $A.$ This is a long standing question and one that has received extensive attention over the years. This includes the well-known Davis-Kahan theorem~\cite{davis1970rotation}, work by Wedin~\cite{Wedin1972}, and more general and extensive perturbation theory; see, \emph{e.g.,}~\cite{stewart1990Matrix} for an overview. 

Nevertheless, many problems of growing interest in mathematics, statistics, and computer science require new variants of such theory. Most notably, this manifests as modifications to the metrics we use to assess the similarity of invariant subspaces of $A$ and $A+E.$ Concretely, whereas traditional theory is often interested in classical notions of subspace distance measured by spectral or Frobenius norms, we will be interested in row-wise\footnote{As elaborated on later, we must somewhat carefully define what it means for two subspaces to be close row-wise.} (or closely related) measures of error. 
In Section~\ref{sec:prelim} we will formally outline the specifics of these metrics, contrast them to traditionally considered metrics, and provide additional preliminary material relevant to our work.

The impetus for these new types of bounds is often, though not exclusively, problems arising in statistics and computer science such as matrix completion~\cite{candes2009Exact,keshavan2010matrix}, principal component analysis~\cite{cai2013sparse,nadler2008finite}, robust factor analysis~\cite{fan2018Robust}, spectral clustering~\cite{abbe2017Entrywise,Athreya2017rdpg,damle2018simple,lei2015consistency,rohe2011spectral,vonluxburg2008Consistency}, and more. In these settings $A$ will often represent some model and a given instance of the model $\hA$ can be thought of as a (random) perturbation to this baseline, \emph{i.e.} $\hA=A+E.$ Many models $A$ have highly structured and meaningful invariant subspaces whose properties form the basis for a wide variety of algorithmic development and analysis of the underlying problem. Therefore, given $\hA$ we would like to understand if that structure can still be reliably leveraged. For many of these applications traditional measures of distance do not not necessarily provide adequate control over changes to the invariant subspaces.

A simple, concrete illustration of the types of bounds we will develop is encapsulated in the following scenario. Given a rank-$k$ symmetric matrix $A,$ an $\nk$ matrix $V$ with orthonormal columns representing the subspace associated with the non-zero eigenvalues, and a symmetric perturbation $E,$ when is the dominant invariant subspace of $A+E$ closer to $V$ row-wise than may be expected based on the smallest non-zero eigenvalue of $A$ and the spectral norm of $E$? 

More specifically, if we let $k=1$ this question reduces to understanding when
\[
\min_{s=\pm 1} \|\widehat{v} - sv\|_{\infty} \ll \min_{s=\pm 1} \|\widehat{v} - sv\|_2,
\]
where $v$ is the eigenvector of $A$ associated the eigenvalue $\lambda\neq 0$ and $\widehat{v}$ is the eigenvector of $A+E$ associated with the eigenvalue largest in magnitude (assuming $\|E\|_2$ is small enough for this to be the appropriate pairing). Our main results restricted to this setting answer such a question by providing explicit bounds on $\min_{s=\pm 1} \|\widehat{v} - sv\|_{\infty}.$ In particular we show that
\[
\min_{s=\pm 1} \|\widehat{v} - sv\|_{\infty} \leq 8\|v\|_{\infty}\frac{\|Ev\|^2_2}{\lambda^2} + 2\zeta\frac{\|Ev\|_{\infty}}{\lambda} + 4 \zeta \frac{\|Ev\|_2}{\lambda^2}\max_{i}\|E_{i,:}\|_2,
\]
where $\zeta = \|I-vv^T\|_{\infty}.$ Notably, this upper bound can be substantially smaller than a traditional Davis-Kahan bound on ${\|vv^T-\widehat{v}\widehat{v}^T\|_2}$ that, up to constants, behaves like $\|Ev\|_2/\lambda.$

As illustrated by the above bound and the more general results we present later, in situations where $E$ and $V$ have relatively uniform row norms (\emph{i.e.,} they are incoherent~\cite{candes2009Exact}) we may expect significantly better bounds than what is captured by traditional subspace perturbation theory. We will formalize these results for symmetric matrices in Section~\ref{sec:main} where we also provide proofs and investigate the behavior of our bounds. To complement our theoretical developments, Section~\ref{sec:numerics} provides several numerical examples illustrating our bounds in appropriate scenarios.

Given the potential usefulness of such bounds and the extent of relevant applications, this area has received significant attention over the past several years~\cite{abbe2017Entrywise,cape2017Twotoinfinity,eldridge2017Unperturbed,fan2018eigenvector}. Our main contributions are summarized as:
\begin{enumerate}
\item We develop new deterministic row-wise perturbation bounds for orthonormal bases of invariant subspaces of symmetric matrices. Prior work often entangles deterministic bounds with assumptions and/or analysis tailored to specific random settings. Nevertheless, our deterministic bounds are easily amenable to further analysis in the probabilistic settings as illustrated in Section~\ref{subsec:probSetting}. Furthermore, in Section~\ref{sec:nonnormal} we provide some basic extensions of our results to situations where $A$ and/or $E$ are non-normal matrices.
\item We show that our bounds are sharp by constructing adversarial perturbations that saturate the bounds. 
\item Our perturbation bounds apply under more general conditions than preceding results and we argue that our assumptions are minimal in certain respects by considering specific examples.
\end{enumerate}
While some aspects of our bounds are broadly in alignment with prior work, as noted above others are new, rely on less restrictive assumptions, and are more directly interpretable. We will draw specific comparisons to existing results parallel to the development of our bounds in Section~\ref{sec:main}.

\section{Preliminaries}
\label{sec:prelim}
\subsection{Matrices}
Let $A\in\reals^{n\times n}$ be a symmetric (not necessarily positive-definite) matrix. We arrange its eigenvalues in descending order 
\[
\lambda_1 \ge \dots \ge \lambda_n,
\]
and denote its eigen-decomposition as
\begin{equation}
\label{eqn:eigen-decomp}
A = V_1\Lambda_1 V_1^T + V_2\Lambda_2 V_2^T,
\end{equation}
where $\Lambda_1 =  \diag(\lambda_1,\ldots,\lambda_r)$ and $\Lambda_2 =  \diag(\lambda_{r+1},\ldots,\lambda_n)$ and $V_1\in\reals^{n\times r}$ and $V_2\in\reals^{n\times(n-r)}$ are matrices with orthonormal columns whose columns are the associated eigenvectors.\footnote{In the case of repeated eigenvalues any orthonormal basis for the associated eigenspace suffices.} To make this splitting well defined we assume that $\lambda_r > \lambda_{r+1}.$ In addition, for our later results it is important that $\Lambda_2$ explicitly include any zero eigenvalues of $A$ as they must be incorporated into our measure of how close $\Lambda_1$ and $\Lambda_2$ are. Because we are interested in algebraic orderings of the eigenvalues we use the term \emph{dominant} to refer to the algebraically largest eigenvalues (in contrast to the largest in magnitude). Lastly, for a matrix $A$ we let $\Lambda(A)$ denote the set of eigenvalues of $A.$

\begin{remark}
Note that the restriction to the $r$ algebraically largest eigenvalues is not essential. Our results will only depend on spectral separation (or closely related quantities) and therefore may be easily adapted to any isolated collection of $r$ eigenvalues. However, such an adaptation introduces notational overhead without adding anything fundamentally new. Similarly, with appropriate adaptation these results are applicable to magnitude based ordering of the eigenvalues\textemdash though the ordering itself may be more sensitive to perturbations than the associated subspaces. Therefore, to streamline the exposition we present everything for the $r$ algebraically largest eigenvalues.
\end{remark}

Now, let $\hA = A + E$ represent a perturbation of $A$ by a symmetric matrix $E$ and let $\hV_1\in\reals^{n\times r}$ be a matrix with orthonormal columns whose range is the $r$-dimensional invariant subspace of $\hA$ associated with the algebraically largest eigenvalues. For the moment we will assume $\hat{\lambda}_r > \hat{\lambda}_{r+1}$ so this notation is well defined, later assumptions we make will ensure this property. The primary contributions of this paper are centered around relating $V_1$ and $\hV_1.$ 

Throughout this work we will be interested in projections of matrices onto the invariant subspaces associated with $A$ (represented by $V_1$ and $V_2$). Therefore, for any matrix $B\in\reals^{n\times n}$ define $B_{i,j}$ as $V_i^TBV_j$ with $i,j\in\{1,2\}$. Lastly, for any matrix $B\in\reals^{n\times n}$ we define the Sylvester operator $S_B:\reals^{n\times r}\to\reals^{n\times r}$ as 
\[
S_B:Z\to ZB_{1,1} - B_{2,2}Z.
\] 
Note that we have embedded $V_1$ and $V_2$ directly into the definition of this Sylvester operator for convenience.

\subsection{Norms}
Throughout this paper, we let $\norm{\cdot}_1,$ $\norm{\cdot}_\infty,$ and $\normtwo{\cdot}$ denote the standard $\ell_p$ vector norms and their associated induced matrix norms. Similarly, we let $\normfro{\cdot}$ denote the Frobenius norm. In this work we will also be concerned with the two-to-infinity induced matrix norm. Specifically, we denote this norm by $\normtwoinf{\cdot}$ and note that for an $\nk$ matrix $B$ it can be defined as the maximum $\ell_2$ norm of rows of $B,$ \ie
\[
\normtwoinf{B} = \max_{i=1,\ldots,n}\normtwo{B_{i,:}}.  
\]

We outline a few easily verified properties of $\normtwoinf{\cdot}$ that will be useful later:
\begin{itemize}
\item[]  {\sc Unitary invariance from the right:} For any orthogonal $Z\in\reals^{k\times k}$
\[
\normtwoinf{BZ} = \normtwoinf{B}.
\]
In other words, the norm is invariant under orthogonal transforms from the right (though, notably, not from the left). This property follows immediately from the unitary invariance of the two-norm.
\item[]  {\sc Invariance to signed permutations from the left:} For any signed permutation $\Pi\in\reals^{n\times n}$
\[
\normtwoinf{\Pi B} = \normtwoinf{B}.
\]
This result follows from the invariance of $\|\cdot\|_{\infty}$ to signed permutations.
\item[] {\sc Sub-multiplicative relations:}  The relevant sub-multiplicative relationships are 
\[
{\normtwoinf{B_1B_2} \leq \normtwoinf{B_1}\normtwo{B_2}} \quad \text{and} \quad {\normtwoinf{B_1B_2}\leq\norminf{B_1}\normtwoinf{B_2}.}
\] 
These inequalities follow from the definition of induced matrix norms and consistency of $\|\cdot\|_2$ and $\|\cdot\|_{\infty}$.
\end{itemize}

\subsection{Subspace distances}
Given our fairly loose assumptions on the eigenvalues of $A,$ we cannot always talk about convergence to individual eigenvectors. Instead, we consider the distances between invariant subspaces. We will often (implicitly) associate matrices with orthonormal columns and subspaces, and refer to the range of a matrix $W$ as $\ran{W}.$ Given two $\nk$ matrices with orthonormal columns $W$ and $\tW$ the distance between the subspaces $\ran W$ and $\ran \tW$ is 
\[
\dist(W,\tW)\equiv\|WW^T-\tW\tW^T\|_2.
\]
Equivalent definitions include (see, \emph{e.g.,}~\cite[\S~2.6]{golub1996Matrix}):
\begin{itemize}
\item[] {\sc Complementary subspaces:} Let $W_2 \in\reals^{n \times n-k}$ be an orthonormal basis for the orthogonal complement of the subspace spanned by $W,$ then
\[
\dist(W,\tW)\equiv\|W_2^T\tW\|_2.
\]
\item[] {\sc Sine-$\Theta$ distance:} Let $\Theta(W,\tW)$ be a diagonal matrix containing the principle angles between $W$ and $\tW,$ then 
\[
\dist(W,\tW)\equiv\|\sin \Theta(W,\tW)\|_2.
\]
\end{itemize}

The bounds we develop in Section~\ref{sec:main} will focus on a slightly different measure between $W$ and $\tW.$ Specifically, we will be concerned with the \emph{row-wise} error metric
\begin{equation}
\label{eqn:2inf_dist}
\min\{\normtwoinf{\tW U - W}:U\in\orth^k\},
\end{equation}
where the minimization over orthogonal matrices ensures the metric is appropriate for subspaces (as opposed to relying on a specific choice of basis). Further motivation for this metric is encapsulated by the following proposition.
\begin{proposition}
\label{prop:2infvec}
Given two orthonormal bases $W$ and $\tW,$ for any $w\in\ran{W}$ there exists a $\widetilde{w}\in\ran{\tW}$ such that
\[
\|w-\widetilde{w}\|_{\infty}\leq \min\{\normtwoinf{\tW U - W}:U\in\orth^k\}.
\]
\end{proposition}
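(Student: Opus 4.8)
The plan is to reduce this subspace statement to a single, row-by-row application of the Cauchy--Schwarz inequality after expressing $w$ in terms of the basis $W$. One structural point to flag at the outset: the right-hand side of the claimed inequality does not depend on $w$, so the bound can only hold once $w$ is normalized. I would therefore read the statement with $\normtwo{w}\le 1$ (the case of unit vectors, which is the case of interest), and the argument below in fact gives $\norminf{w-\widetilde{w}}\le \normtwo{w}\cdot\min\{\normtwoinf{\tW U - W}:U\in\orth^k\}$ for arbitrary $w\in\ran{W}$.

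First I would fix a minimizer: since $\orth^k$ is compact and $U\mapsto\normtwoinf{\tW U - W}$ is continuous, the minimum $\rho := \min\{\normtwoinf{\tW U - W}:U\in\orth^k\}$ is attained at some $U^\star\in\orth^k$. Next, because $w\in\ran{W}$ and $W$ has orthonormal columns, I would write $w = Wc$ with $c = W^T w$; orthonormality of the columns of $W$ gives $\normtwo{c} = \normtwo{w}\le 1$. I would then simply \emph{define} the candidate vector $\widetilde{w} := \tW U^\star c$, which lies in $\ran{\tW}$ since it is a linear combination of the columns of $\tW$.

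It then remains to estimate $w - \widetilde{w} = Wc - \tW U^\star c = (W - \tW U^\star)c$ in the $\ell_\infty$ norm. For each row index $i$, the $i$-th entry of this vector is the inner product of the $i$-th row of $W - \tW U^\star$ with $c$, so Cauchy--Schwarz gives $|(w-\widetilde{w})_i| \le \normtwo{(W - \tW U^\star)_{i,:}}\,\normtwo{c} \le \normtwo{(W - \tW U^\star)_{i,:}}$. Maximizing over $i$ yields $\norminf{w-\widetilde{w}} \le \normtwoinf{W - \tW U^\star}$, and since negating a row leaves its $\ell_2$ norm unchanged we have $\normtwoinf{W - \tW U^\star} = \normtwoinf{\tW U^\star - W} = \rho$, which is exactly the claimed bound. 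I do not expect a genuine obstacle here; the only things to be careful about are recording why the minimum over $\orth^k$ is attained (so that $U^\star$, and hence $\widetilde{w}$, is well defined) and the normalization $\normtwo{w}\le 1$, without which the displayed inequality would fail by a factor of $\normtwo{w}$.
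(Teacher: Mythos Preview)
Your argument is correct, and the normalization issue you flag is real: as stated the bound fails for $\normtwo{w}>1$, so reading the proposition for unit (or subunit) vectors is the right fix, and your proof gives the general inequality $\norminf{w-\widetilde{w}}\le\normtwo{w}\cdot\min_{U\in\orth^k}\normtwoinf{\tW U-W}$. The paper does not supply a proof of this proposition, so there is nothing to compare against; your route via $w=Wc$, $\widetilde{w}=\tW U^\star c$, and row-wise Cauchy--Schwarz is the natural one and is complete.
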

In contrast, the analogy to Proposition~\ref{prop:2infvec} for traditional subspace distances would provide a bound on $\|w-\widetilde{w}\|_2$.

Conceptually, the metric~\eqref{eqn:2inf_dist} is closely related to the so-called orthogonal Procrustes problem
\begin{equation}
\label{eqn:F_dist}
\min\{\|\tW U - W\|_F:U\in\orth^k\},
\end{equation}
which is well-studied and has a known solution easily computable via the SVD of $\tW^TW$ (see, \emph{e.g.,} \cite[\S~12.4]{golub1996Matrix}). In fact, the orthogonal Procrustes problem can also be related to subspace distance since (see, \emph{e.g.,}~\cite{cai2013sparse} and~\cite[\S~12.4]{golub1996Matrix})
\[
\|\sin \Theta(W,\tW)\|_2 \leq \min\{\|\tW U - W\|_F:U\in\orth^k\} \leq \sqrt{2k}\|\sin \Theta(W,\tW)\|_2.
\] 
Notably, the entry-wise definitions of $\normtwoinf{\cdot}$ and $\|\cdot\|_F$ immediately show it is plausible that~\eqref{eqn:2inf_dist} can be considerably (by a factor of $1/\sqrt{n}$) smaller than~\eqref{eqn:F_dist}. In short, using $\normtwoinf{\cdot}$ allows us to understand how well the error is distributed over the rows.  

\subsection{Applications of two-to-infinity distances}
From an application perspective there are many reasons why we may be interested in distances measured via $\normtwoinf{\cdot}.$ For example, spectral algorithms for clustering interpret invariant subspaces row-wise to build low-dimensional embeddings of nodes in a graph or points in a point cloud~\cite{von2007tutorial} (specifically, rows of $V_1$ correspond to an $r$-dimensional embedding of the graph nodes or data points). Therefore, concrete analysis of the performance of spectral algorithms on model problems or justification of observed performance on real-world data benefits from ``point-wise'' control over perturbations to the embedding. 

Concretely, related work has been used to show spectral methods are information theoretically optimal for the Stochastic Block Model (SBM) with two blocks in the regime where node degree grows logarithmically with graph size~\cite{abbe2017Entrywise}. (See~\cite{abbe2017Community} for a more general perspective on recent advances for this problem, and~\cite{le2018Concentration} for how random matrix theory and concentration results play a role in community detection.) Algorithmically, it is significantly more difficult to analyze situations with more than two blocks, but we believe our results paired with specific formulations of spectral algorithms~\cite{damle2018simple} may allow for more general analysis. Other problems that benefit from such bounds include so-called synchronization problems~\cite{perry2016optimality,singer2011angular}, recovery of unknown mixture components via spectral methods~\cite{vonluxburg2008Consistency}, and analysis of algorithmic performance on more general graph models such as random dot product graphs~\cite{Athreya2017rdpg}.

\subsection{Separation of matrices}
An important concept for our work is the separation of matrices in various norms. Specifically, for any two matrices $B\in\reals^{\ell \times \ell}$ and $C\in\reals^{m \times m}$ and norm $\|\cdot\|_*$ on $\reals^{m\times \ell}$, define\footnote{This specific form of $\sep$ differs slightly notationally, though not mathematically, from the standard way it is written for the two or Frobenius norm. Since we will ultimately be dealing with norms where $\|B\|_*\neq\|B^T\|_*$ this definition is required for consistency.}
\[
\sep_*(B,C) = \inf\{\|ZB-CZ\|_*:\|Z\|_* = 1\}.
\]
Perhaps the most pervasive use of $\sep$ is in traditional invariant subspace perturbation theory. In fact, for normal matrices $B$ and $C$
\[
\sep_F(B,C) = \min_{\lambda\in\Lambda(B),\mu\in\Lambda(C)} \lvert \lambda - \mu\rvert,
\]
thereby recovering the commonly used notion of an eigengap (see, \emph{e.g.,} \cite{stewart1990Matrix}).

Importantly, and in alignment with our algebraic ordering of eigenvalues above, $\sep$ is shift invariant in any norm, \emph{i.e.} 
\[
\sep_*(B+\xi I,C + \xi I) = \sep_*(B,C)
\]
for any $\xi\in\reals.$ Furthermore, in any unitarily invariant norm $\sep$ is relatively insensitive to perturbations of small spectral norm. In other words (see Proposition 2.1 of \cite{karow2014Perturbation})
\[
\sep_2(B+E_B,C + E_C) \geq \sep_2(B,C) - \|E_B\|_2 - \|E_C\|_2
\]
and
\[
\sep_F(B+E_B,C + E_C) \geq \sep_F(B,C) - \|E_B\|_2 - \|E_C\|_2
\]
for $E_B\in\reals^{\ell\times \ell}$ and $E_C\in\reals^{m\times m}.$ Lastly, given diagonal matrices it is possible to control $\sep$ in a variety of norms necessary for our work. These bounds are captured collectively in Lemma~\ref{lem:sep_diag} (the results about $\sep_2$ and $\sep_F$ are well known).
\begin{lemma}
\label{lem:sep_diag}
Let $D_1\in\reals^{\ell \times \ell}$ and $D_2\in\reals^{m \times m}$ be  diagonal matrices and assume that $\lambda_{\min}(D_1) \geq \lambda_{\max}(D_2),$ then 
\begin{equation}
\sep_2(D_1,D_2) = \sep_F(D_1,D_2) = \sep_{2,\infty}(D_1,D_2) = \lambda_{\min}(D_1) - \lambda_{\max}(D_2).
\end{equation} 
\end{lemma}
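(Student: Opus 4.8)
The plan is to work directly from the definition $\sep_*(D_1,D_2)=\inf\{\|ZD_1-D_2Z\|_*:\|Z\|_*=1\}$, exploiting that diagonality makes the Sylvester map act entrywise. Write $D_1=\diag(\alpha_1,\dots,\alpha_\ell)$ and $D_2=\diag(\beta_1,\dots,\beta_m)$, and set $\delta=\lambda_{\min}(D_1)-\lambda_{\max}(D_2)\ge 0$. For any $Z\in\reals^{m\times\ell}$ one has $(ZD_1-D_2Z)_{ij}=(\alpha_j-\beta_i)Z_{ij}$, and the hypothesis $\lambda_{\min}(D_1)\ge\lambda_{\max}(D_2)$ forces $\alpha_j-\beta_i\ge\delta$ for every $i,j$. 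Both the upper and lower bounds will be read off from this identity, the upper bound for all three norms at once.

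For the upper bound I would pick indices $i^\star,j^\star$ with $\beta_{i^\star}=\lambda_{\max}(D_2)$ and $\alpha_{j^\star}=\lambda_{\min}(D_1)$, and take $Z$ to be the matrix whose only nonzero entry is a $1$ in position $(i^\star,j^\star)$. Then $\normtwo{Z}=\normfro{Z}=\normtwoinf{Z}=1$ and, by the entrywise identity, $ZD_1-D_2Z=\delta Z$, so $\|ZD_1-D_2Z\|_*=\delta$ in each of the three norms; hence $\sep_*(D_1,D_2)\le\delta$ for $*\in\{2,F,(2,\infty)\}$. For the lower bound in the Frobenius and two-to-infinity norms, squaring the entrywise identity and using $\alpha_j-\beta_i\ge\delta$ gives $\normfro{ZD_1-D_2Z}^2=\sum_{i,j}(\alpha_j-\beta_i)^2Z_{ij}^2\ge\delta^2\normfro{Z}^2$ and, row by row, $\normtwoinf{ZD_1-D_2Z}\ge\delta\normtwoinf{Z}$; these are routine.

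The step I expect to be the only real obstacle is the spectral-norm lower bound, since the crude entrywise estimate does not survive the passage to $\normtwo{\cdot}$ (a Hadamard-type product can strictly shrink the spectral norm). The fix is to test against the extremal singular vectors of $Z$: let $\sigma=\normtwo{Z}$, with unit vectors $u\in\reals^m$ and $v\in\reals^\ell$ satisfying $Zv=\sigma u$ and $Z^Tu=\sigma v$. Then
\[
u^T(ZD_1-D_2Z)v=(u^TZ)D_1v-u^TD_2(Zv)=\sigma\,v^TD_1v-\sigma\,u^TD_2u=\sigma\bigl(v^TD_1v-u^TD_2u\bigr),
\]
and since $v^TD_1v\ge\lambda_{\min}(D_1)$ and $u^TD_2u\le\lambda_{\max}(D_2)$ for unit vectors, the right-hand side is at least $\sigma\delta$. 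Because $\normtwo{ZD_1-D_2Z}\ge u^T(ZD_1-D_2Z)v$, this yields $\normtwo{ZD_1-D_2Z}\ge\delta\normtwo{Z}$ and hence $\sep_2(D_1,D_2)\ge\delta$. Combining the matching upper and lower bounds in each norm gives the stated chain of equalities; everything apart from the singular-vector argument is bookkeeping with the entrywise formula.
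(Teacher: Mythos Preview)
Your proof is correct. The paper takes a slightly different route for the lower bounds: it first invokes shift invariance of $\sep$ to reduce to the case where $D_1$ and $D_2$ have non-negative entries, and then for each norm applies a triangle-inequality splitting $\|ZD_1-D_2Z\|_*\ge\|ZD_1\|_*-\|D_2Z\|_*$, bounding the two pieces separately (for the spectral norm the bound $\|ZD_1\|_2\ge\lambda_{\min}(D_1)$ is obtained via the SVD of $Z$). Your approach is more direct: the entrywise identity $(ZD_1-D_2Z)_{ij}=(\alpha_j-\beta_i)Z_{ij}$ dispatches the Frobenius and two-to-infinity lower bounds without any shift, and your bilinear-form argument with the extremal singular vectors handles the spectral norm without splitting. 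The upper bound via the rank-one test matrix $e_{i^\star}e_{j^\star}^T$ is identical in both. What your route buys is that it never needs the non-negativity reduction and is arguably cleaner for the $F$ and $(2,\infty)$ cases; what the paper's route buys is a uniform template (triangle inequality plus a norm-specific estimate) across all three norms, at the cost of the preliminary shift.
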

\begin{proof}
We defer the proof to Appendix~\ref{sec:proofSepDiag}. 
\end{proof}

In addition to the above ``canonical'' definition of separation, some of our bounds requires that we introduce a slight variant of $\sep.$ In particular, we will occasionally consider the separation measured only over matrices in a linear subspace. More specifically, let $W\in\reals^{n\times k}$ be an orthonormal basis for a $k$-dimensional linear subspace and define
\[
\sep_{*,W}(B,C) = \inf\{\|ZB-CZ\|_*:Z\in \ran W, \|Z\|_* = 1\}.
\]
It is immediate that $\sep_{*,W}(B,C) \geq \sep_{*}(B,C)$ for any $W$ and therefore, as will become evident, consideration of this restricted version of $\sep$ can only improve our bounds. For us, the key use of this restricted separation quantity will be when $C = WD_2W^T$ for some diagonal matrix $D_2.$\footnote{One consequence of this restriction is that it will allow us to eliminate any artificial requirement that $\Lambda_1$ be separated from zero when $A$ is not low-rank.} 

In anticipation of its use later, we prove some results about this restricted version of $\sep$ analogous to our earlier statements. First, we generalize the notion of $\sep$ being shift invariant in Lemma~\ref{lem:sepU_diag}.
\begin{lemma}
\label{lem:sepU_diag}
Consider $B\in\reals^{\ell \times \ell}$ and $C\in\reals^{m \times m},$ and let $W\in\reals^{n\times m}$ with $n\geq m$ be a matrix with orthonormal columns. Then,
\[
\sep_{*,W}(B+\xi I, WCW^T+\xi WW^T) = \sep_{*,W}(B, WCW^T)
\]
for any $\xi\in\reals.$
\end{lemma}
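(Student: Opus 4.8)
The plan is to show that, after the substitution $Z = WY$, the two separation quantities are infima of the \emph{same} function over the \emph{same} feasible set, so they must coincide. The only property of $W$ that will be used is $W^TW = I_m$.

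First I would describe the feasible set. In $\sep_{*,W}$ the infimum ranges over matrices whose columns lie in $\ran W$, i.e.\ over $Z = WY$ with $Y\in\reals^{m\times\ell}$, subject to the normalization $\|Z\|_* = 1$. This set depends only on $W$ and on the norm $\|\cdot\|_*$ — not on the matrix arguments of $\sep$ — so both sides of the claimed identity are infima over exactly this set, and it suffices to check that the two objective functions agree at every feasible $Z$.

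Next I would expand the objective on the left. For $Z = WY$, using $W^TW = I_m$,
\[
Z(B+\xi I) - (WCW^T + \xi WW^T)Z = WYB + \xi WY - WCY - \xi WY = W(YB-CY),
\]
and carrying out the same computation with $\xi = 0$ (and $WCW^T$ in the second slot) gives $ZB - WCW^TZ = W(YB-CY)$ as well; the shift terms $\xi WY$ simply cancel. Hence the two objectives are identical on the feasible set, so their infima — and therefore $\sep_{*,W}(B+\xi I, WCW^T + \xi WW^T)$ and $\sep_{*,W}(B, WCW^T)$ — are equal.

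There is no real obstacle here; the one point to be careful about is the bookkeeping of what ``$Z\in\ran W$'' means (so $Z$ is $n\times\ell$ and $Y$ is $m\times\ell$) and the observation that the correct companion of $WCW^T$ is the shift $\xi WW^T$ rather than $\xi I_n$ — the former is $\xi$ times the identity on $\ran W$ and zero on its orthogonal complement, which is precisely why passing to the subspace-restricted separation restores shift invariance. Everything else is the single line of algebra above.
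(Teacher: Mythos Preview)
Your argument is correct and is essentially the same as the paper's one-line proof, which simply notes that for any $Z\in\ran W$ one has $\xi WW^TZ = \xi Z$, so the shift cancels in the objective. You have just made this explicit by writing $Z=WY$ and expanding.
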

\begin{proof}
The proof follows immediately from the fact that for any $Z\in\ran W$ $\xi WW^TZ = \xi Z$.
\end{proof}
Perhaps more importantly, and as illustrated in Lemma~\ref{lem:sepU_orth} in any unitarily invariant norm we can relate this restricted version of $\sep$ directly to $\sep(B,C).$   
\begin{lemma}
\label{lem:sepU_orth}
Consider $B\in\reals^{\ell \times \ell}$ and $C\in\reals^{m \times m},$ and let $W\in\reals^{n\times m}$ with $n\geq m$ be a matrix with orthonormal columns. Then, for any unitarily invariant norm $\|\cdot\|_*$
\[
\sep_{*,W}(B, WCW^T) = \sep_{*}(B,C)
\]
for any $\xi\in\reals.$
\end{lemma}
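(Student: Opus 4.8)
The plan is to reduce the restricted quantity $\sep_{*,W}(B, WCW^T)$ to $\sep_*(B,C)$ by explicitly parametrizing the feasible set over which the infimum is taken. First I would observe that a matrix $Z'\in\reals^{n\times\ell}$ satisfies $Z'\in\ran W$ (i.e.\ every column of $Z'$ lies in the column space of $W$) if and only if $Z' = WY$ for some $Y\in\reals^{m\times\ell}$; since $W$ has orthonormal columns this $Y$ is unique, namely $Y = W^TZ'$, so $Y\mapsto WY$ is a bijection from $\reals^{m\times\ell}$ onto $\{Z':Z'\in\ran W\}$. Under this parametrization the Sylvester-type expression collapses cleanly: using $W^TW = I_m$,
\[
Z'B - WCW^TZ' \;=\; WYB - WC\,(W^TW)\,Y \;=\; W(YB - CY).
\]

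Next I would invoke the one norm fact that does the real work: left multiplication by a matrix with orthonormal columns preserves every unitarily invariant norm. Indeed, for any $X$ we have $(WX)^T(WX) = X^TX$, so $WX$ and $X$ share the same (nonzero) singular values, whence $\|WX\|_* = \|X\|_*$; here one uses that $\|\cdot\|_*$ is unitarily invariant and, implicitly, that it is specified compatibly on matrices with differing numbers of rows, so that adjoining zero singular values does not change its value. Applying this twice gives, for $Z' = WY$,
\[
\|Z'\|_* = \|WY\|_* = \|Y\|_* \qquad\text{and}\qquad \|Z'B - WCW^TZ'\|_* = \|W(YB-CY)\|_* = \|YB-CY\|_*.
\]

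Finally I would assemble these observations: the constraint ``$Z'\in\ran W$ and $\|Z'\|_* = 1$'' corresponds exactly to ``$Y\in\reals^{m\times\ell}$ and $\|Y\|_* = 1$'', and under the bijection the objective $\|Z'B-WCW^TZ'\|_*$ equals $\|YB-CY\|_*$. Substituting into the definition of $\sep_{*,W}$ then yields
\[
\sep_{*,W}(B, WCW^T) = \inf\{\|YB-CY\|_*:Y\in\reals^{m\times\ell},\ \|Y\|_* = 1\} = \sep_*(B,C).
\]

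The computation itself is routine; the only step meriting care is the norm-invariance claim, and in particular the convention that a unitarily invariant norm is defined consistently across matrix sizes (so that padding by zero singular values is harmless) — once that is granted, the norm-preserving bijection closes the argument immediately. I would also note that this reasoning uses only invariance under left multiplication by $W$, not full unitary invariance on both sides nor the shift-invariance of Lemma~\ref{lem:sepU_diag}, so it goes through verbatim in that slightly narrower setting as well.
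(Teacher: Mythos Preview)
Your proof is correct and follows essentially the same approach as the paper: parametrize $Z' = WY$, use $W^TW = I_m$ to simplify the Sylvester expression to $W(YB-CY)$, and invoke unitary invariance twice to transfer both the constraint and the objective from $Z'$ to $Y$. Your write-up is more careful than the paper's in spelling out why $\|WX\|_* = \|X\|_*$ (via equality of singular values) and in flagging the implicit convention that the norm is defined consistently across matrix sizes, but the underlying argument is identical.
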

\begin{proof}
We first rewrite the infimum over $\ran W$ in terms of coefficients $X$ of $Z$ in the orthonormal basis $W$ as
\[
\sep_{*,W}(B,WCW^T) = \inf\{\|WXB-WCW^TWX\|_*:\|X\|_* = 1\},
\]
where we have used the fact that $\|\cdot\|_*$ is unitarily invariant to say that $\|WX\|_*=\|X\|_*.$ Using $W^TW=I$ and the unitary invariance of $\|\cdot\|_*$ once more concludes the proof.
\end{proof}

We now use these results to control a restricted version of $\sep_{(2,\infty)}$ (something that will be of particular importance to us) in terms of more traditional and directly interpretable quantities. These results are encapsulated in Lemma~\ref{lem:sepU_lowerbound} and we stress that they are worse case bounds that may be far from achieved in practice or provably loose in specific cases.\footnote{Concrete examples being when $C = 0$ or $W=I$ and $B$ and $C$ are diagonal (see Lemma~\ref{lem:sep_diag}) in which case $\sep_F = \sep_2 = \sep_{2,\infty}.$} Nevertheless, the fact that the $2,\infty$ norm is not unitarily invariant from the left significantly changes the landscape of possible outcomes.
\begin{lemma}
\label{lem:sepU_lowerbound}
Consider $B\in\reals^{\ell \times \ell}$ and $C\in\reals^{m \times m},$ and let $W\in\reals^{n\times m}$ with $n\geq m$ be a matrix with orthonormal columns. Then,
\[
\sep_{(2,\infty),W}(B, WCW^T) \geq \max \left\{\frac{1}{\sqrt{n}},\beta_W\right\}\sep_F(B,C),
\]
where
\[
\beta_{W} = \inf\left\{\frac{\normtwoinf{WX}}{\normtwoinf{W}}:\|X\|_F= 1\right\}.
\]
\end{lemma}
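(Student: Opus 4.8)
The plan is to reduce the restricted $(2,\infty)$ separation to an optimization over coefficient matrices, exactly as in the proof of Lemma~\ref{lem:sepU_orth}, and then apply two short chains of norm inequalities — one producing the $1/\sqrt n$ factor and one producing the $\beta_W$ factor.

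First I would make the change of variables $Z = WX$ with $X\in\reals^{m\times\ell}$, valid because $Z\in\ran W$. Using $W^TW = I$ we get $WCW^TZ = WCX$, hence
\[
ZB - WCW^TZ = W(XB - CX), \qquad \normtwoinf{Z} = \normtwoinf{WX},
\]
so that
\[
\sep_{(2,\infty),W}(B, WCW^T) = \inf\{\normtwoinf{W(XB-CX)} : \normtwoinf{WX} = 1\}.
\]
It then suffices to show $\normtwoinf{W(XB-CX)} \ge \max\{1/\sqrt n,\beta_W\}\,\sep_F(B,C)$ for every feasible $X$, and take the infimum.

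For the $1/\sqrt n$ estimate I would use that a matrix with $n$ rows satisfies $\normtwoinf{\cdot}\ge n^{-1/2}\normfro{\cdot}$, then $\normfro{W(XB-CX)} = \normfro{XB-CX}$ (orthonormal columns of $W$), then $\normfro{XB-CX}\ge \sep_F(B,C)\normfro{X}$ by the definition of $\sep_F$, and finally $\normfro{X} = \normfro{WX}\ge\normtwoinf{WX} = 1$. For the $\beta_W$ estimate I would use, by homogeneity of the definition of $\beta_W$, that $\normtwoinf{WY}\ge\beta_W\normtwoinf{W}\normfro{Y}$ for every $Y$; apply this with $Y = XB-CX$ and again bound $\normfro{Y}\ge\sep_F(B,C)\normfro{X}$; and then lower-bound $\normfro{X}$ via the submultiplicative relation $\normtwoinf{WX}\le\normtwoinf{W}\normtwo{X}\le\normtwoinf{W}\normfro{X}$, which under the normalization $\normtwoinf{WX}=1$ gives $\normfro{X}\ge 1/\normtwoinf{W}$. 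Multiplying through, the two occurrences of $\normtwoinf{W}$ cancel and leave $\normtwoinf{W(XB-CX)}\ge\beta_W\,\sep_F(B,C)$. Combining the two estimates and passing to the infimum proves the claim.

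The only delicate point, and the step I expect to be the main obstacle, is the bookkeeping of the factor $\normtwoinf{W}$ in the $\beta_W$ branch: the definition of $\beta_W$ inserts $\normtwoinf{W}$ into the numerator of the lower bound for $\normtwoinf{WY}$, and one must recover a compensating $1/\normtwoinf{W}$ from the normalization $\normtwoinf{WX}=1$ via submultiplicativity — otherwise the bound degrades by the factor $\normtwoinf{W}\le 1$. Everything else is routine manipulation with the norm inequalities collected in Section~\ref{sec:prelim}. As a sanity check one may note $\beta_W\ge 1/\sqrt n$ always, since $\normtwoinf{WX}\ge n^{-1/2}\normfro{WX} = n^{-1/2}\normfro{X}$ while $\normtwoinf{W}\le 1$, so the maximum is in fact attained by $\beta_W$; it is nonetheless natural to keep the immediate $1/\sqrt n$ bound explicit.
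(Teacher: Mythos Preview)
Your proof is correct and follows essentially the same approach as the paper's: the same change of variables $Z=WX$, the same two chains of norm inequalities (one yielding $1/\sqrt{n}$ via $\normtwoinf{\cdot}\ge n^{-1/2}\normfro{\cdot}$ and $\normfro{X}\ge\normtwoinf{WX}$, one yielding $\beta_W$ via $\normtwoinf{WX}\le\normtwoinf{W}\normfro{X}$ and the definition of $\beta_W$). The paper writes the argument as a ratio rather than normalizing $\normtwoinf{WX}=1$, and introduces $T=(XB-CX)/\normfro{X}$ where you invoke homogeneity directly, but these are purely cosmetic differences; your closing observation that $\beta_W\ge 1/\sqrt{n}$ is a nice addition not stated in the paper.
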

\begin{proof}
We prove two lower bounds on $\sep_{(2,\infty),W}(B, WCW^T)$ that always hold and then maximize over them. First, observe that for any $Z\in \ran W$ there exists an $X$ such that $Z = WX,$ hence 
\begin{align*}
\frac{\normtwoinf{ZB-WCW^TZ}}{\normtwoinf{Z}} &= \frac{\normtwoinf{WXB-WCX}}{\normtwoinf{WX}} \\
&\geq \frac{\|W(XB-CX)\|_F}{\sqrt{n}\|X\|_F}\\
&\geq \frac{\sep_F(B,C)}{\sqrt{n}},
\end{align*}
where we have used that $\frac{1}{\sqrt{n}}\|A\|_F \leq \normtwoinf{A} \leq \|A\|_F$ for any $A\in\reals^{n\times m}.$ 

Once again using that there exists an $X$ such that $Z = WX$ we see that 
\begin{align*}
\frac{\normtwoinf{ZB-WCW^TZ}}{\normtwoinf{Z}} &= \frac{\normtwoinf{WXB-WCX}}{\normtwoinf{WX}} \\
&\geq \frac{\normtwoinf{W(XB-CX)}}{\normtwoinf{W}\|X\|_F}\\
&\geq \frac{\normtwoinf{WT}}{\normtwoinf{W}},
\end{align*}
where $T = (XB-CX) / \|X\|_F.$ Since $\|T\|_F\geq \sep_F(B,C)$ we take an infimum of the lower bound over $X$ to conclude the proof.
\end{proof}

Lastly, in Lemma~\ref{lem:sepU_normlowerbound} we provide a direct bound on $\sep_{2,\infty}$ in terms of traditional matrix norms. In some situations, this may provide the most direct control over $\sep_{2,\infty}$ while in others it may be vacuous and one must resort to Lemma~\ref{lem:sepU_lowerbound} instead.
\begin{lemma}
\label{lem:sepU_normlowerbound}
Consider $B\in\reals^{\ell \times \ell}$ and $C\in\reals^{m \times m},$ and let $W\in\reals^{n\times m}$ with $n\geq m$ be a matrix with orthonormal columns. Then,
\[
\sep_{(2,\infty),W}(B, WCW^T) \geq \sigma_{\min}(B) - \|WCW^T\|_{\infty},
\]
where $\sigma_{\min}(B)$ is the minimal singular value of $B.$
\end{lemma}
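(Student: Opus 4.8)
The plan is to fix an arbitrary feasible $Z$, that is $Z \in \ran W$ with $\normtwoinf{Z} = 1$, derive the pointwise bound $\normtwoinf{ZB - WCW^TZ} \ge \sigma_{\min}(B) - \norminf{WCW^T}$, and then take the infimum over all such $Z$; in the paper's notation $\norminf{WCW^T} = \|WCW^T\|_\infty$, so this is exactly the claim. (Note that the restriction $Z \in \ran W$ plays no role in the argument, consistent with the earlier observation that $\sep_{*,W} \ge \sep_*$.) The starting point is the reverse triangle inequality, valid in any norm,
\[
\normtwoinf{ZB - WCW^TZ} \ge \normtwoinf{ZB} - \normtwoinf{WCW^TZ},
\]
after which I bound the two terms separately.

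For the subtracted term I would invoke the sub-multiplicative relation $\normtwoinf{B_1B_2} \le \norminf{B_1}\normtwoinf{B_2}$ recorded in Section~\ref{sec:prelim}, applied with $B_1 = WCW^T$ and $B_2 = Z$, giving $\normtwoinf{WCW^TZ} \le \norminf{WCW^T}\normtwoinf{Z} = \norminf{WCW^T}$. For the leading term I would argue row by row: the $i$-th row of $ZB$ is $Z_{i,:}B$, and since $B$ is square, $\normtwo{Z_{i,:}B} = \normtwo{B^TZ_{i,:}^T} \ge \sigma_{\min}(B)\,\normtwo{Z_{i,:}}$; taking the maximum over $i$ yields $\normtwoinf{ZB} \ge \sigma_{\min}(B)\normtwoinf{Z} = \sigma_{\min}(B)$. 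Substituting both estimates into the displayed inequality and passing to the infimum over feasible $Z$ completes the proof.

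There is no real obstacle here; the one point worth flagging is that $\normtwoinf{\cdot}$ admits no sub-multiplicative bound ``from the right'' that would directly produce $\normtwoinf{ZB} \ge \sigma_{\min}(B)\normtwoinf{Z}$, so the elementary row-wise estimate above is precisely the mechanism supplying that lower bound. As the statement already cautions, the bound is informative only when $\sigma_{\min}(B) > \|WCW^T\|_\infty$, and otherwise one falls back on Lemma~\ref{lem:sepU_lowerbound}.
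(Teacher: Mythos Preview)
Your proof is correct and follows essentially the same route as the paper: drop the subspace constraint (since $\sep_{(2,\infty),W}\ge\sep_{(2,\infty)}$), apply the reverse triangle inequality, bound $\normtwoinf{WCW^TZ}$ via the $\norminf{\cdot}$-$\normtwoinf{\cdot}$ sub-multiplicativity, and bound $\normtwoinf{ZB}$ below by $\sigma_{\min}(B)\normtwoinf{Z}$. Your explicit row-wise justification of the last inequality is a welcome addition, as the paper states it without elaboration.
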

\begin{proof}
Staring from $\sep_{(2,\infty),W}(B, WCW^T) \geq \sep_{(2,\infty)}(B, WCW^T)$ we have that
\begin{align*}
\frac{\normtwoinf{ZB-WCW^TZ}}{\normtwoinf{Z}} &\geq \frac{\normtwoinf{ZB} - \normtwoinf{WCW^TZ} }{\normtwoinf{Z}}\\
&\geq \frac{\sigma_{\min}\normtwoinf{Z} - \|WCW^T\|_{\infty}\normtwoinf{Z} }{\normtwoinf{Z}}\\
\geq \sigma_{\min}(B) - \|WCW^T\|_{\infty}.
\end{align*}
\end{proof}

\section{Main result}
\label{sec:main}
Given the notation and concepts from Section~\ref{sec:prelim} we may now proceed to present our core results bounding $\normtwoinf{\cdot}$ changes in invariant subspaces of symmetric matrices $A$ under symmetric perturbation; the proofs appear in Section~\ref{subsec:proof}. Notably, our main result includes bounds for a specific $U\in\orth^r,$ not just the minimum over all orthogonal matrices.

\begin{theorem}
\label{thm:main}
Let $A\in\reals^{n\times n}$ be symmetric with eigen-decomposition 
\[
A = V_1\Lambda_1V_1^T +V_2\Lambda_2V_2^T
\]
following the conventions of~\eqref{eqn:eigen-decomp} and
\[
\gap = \min\{\sep_2(\Lambda_1,\Lambda_2),\sep_{(2,\infty),V_2}(\Lambda_1,V_2\Lambda_2V_2^T)\}.
\] 
If $\|E\|_2 \le \frac{\gap}{5}$ then
\begin{align}
\normtwoinf{\hV_1\tU - V_1} &\le 8\normtwoinf{V_1}\left(\frac{\|E_{2,1}\|_2}{\sep_2(\Lambda_1,\Lambda_2)}\right)^2\\&\phantom{\le}  + 2\frac{\normtwoinf{V_2E_{2,1}}}{\gap}+ 4\frac{\normtwoinf{V_2E_{2,2}V_2^T}\|E_{2,1}\|_2}{\gap\times \sep_2(\Lambda_1,\Lambda_2)}, \nonumber
\label{eq:mainP}
\end{align}
where $\hV_1$ is any matrix with orthonormal columns whose range is the dominant $r$-dimensional invariant subspace of $\hA,$ and $\tU$ solves the orthogonal Procrustes problem
\begin{equation*}
\min\{\|\hV_1U - V_1\|_F:U\in\orth^r\}.
\end{equation*}
\end{theorem}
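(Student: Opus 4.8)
The plan is to parametrize the dominant invariant subspace of $\hA$ by the classical Riccati relation and then track the two-to-infinity norm through it, invoking the restricted separation $\sep_{(2,\infty),V_2}$ exactly at the one place where a left--unitary-invariance argument is unavailable. First I would note that $\|E\|_2 \le \gap/5 \le \sep_2(\Lambda_1,\Lambda_2)/5$ together with Weyl's inequality keeps $\hat\lambda_r > \hat\lambda_{r+1}$, so $\hV_1$ is well defined, and a Davis--Kahan argument~\cite{davis1970rotation} gives $\|V_2^T\hV_1\|_2 < 1$; hence $V_1^T\hV_1$ is invertible and $X := (V_2^T\hV_1)(V_1^T\hV_1)^{-1}$ makes sense with the columns of $V_1 + V_2X$ spanning $\ran \hV_1$. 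Invariance of $\ran \hV_1$ under $\hA$ then forces the Riccati equation
\[
S_A(X) = X\Lambda_1 - \Lambda_2 X = E_{2,1} + E_{2,2}X - XE_{1,1} - XE_{1,2}X =: R.
\]
Comparing the SVD of $\hV_1^T V_1 = (V_1+V_2X)^T V_1$ with the constraint $\hV_1^T\hV_1 = I$ shows, for \emph{any} orthonormal representative $\hV_1$ of the subspace, that the orthogonal Procrustes minimizer satisfies $\hV_1\tU = (V_1 + V_2X)(I + X^TX)^{-1/2}$. Therefore
\[
\hV_1\tU - V_1 = V_1\big((I+X^TX)^{-1/2} - I\big) + V_2X(I+X^TX)^{-1/2},
\]
and the submultiplicative relations for $\normtwoinf{\cdot}$ together with $\|(I+X^TX)^{-1/2}-I\|_2 \le \tfrac12\|X\|_2^2$ and $\|(I+X^TX)^{-1/2}\|_2 \le 1$ give $\normtwoinf{\hV_1\tU - V_1} \le \tfrac12\normtwoinf{V_1}\|X\|_2^2 + \normtwoinf{V_2X}$.

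It then remains to bound $\|X\|_2$ and $\normtwoinf{V_2X}$. Taking spectral norms in the Riccati equation and using that $\sep_2(\Lambda_1,\Lambda_2) = \lambda_r - \lambda_{r+1}$ (Lemma~\ref{lem:sep_diag}) yields
\[
\sep_2(\Lambda_1,\Lambda_2)\,\|X\|_2 \le \|E_{2,1}\|_2 + \|E\|_2\big(2\|X\|_2 + \|X\|_2^2\big);
\]
combined with $\|E\|_2 \le \sep_2(\Lambda_1,\Lambda_2)/5$ and a short \emph{a priori}/continuity argument keeping $\|X\|_2$ small, this gives $\|X\|_2 \le 2\|E_{2,1}\|_2/\sep_2(\Lambda_1,\Lambda_2)$, which accounts for the $8\normtwoinf{V_1}(\|E_{2,1}\|_2/\sep_2(\Lambda_1,\Lambda_2))^2$ summand (the leftover constant being folded into the later terms).

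The heart of the argument is the bound on $\normtwoinf{V_2X}$. Multiplying the Riccati equation on the left by $V_2$ and using $V_2^TV_2 = I$, the matrix $Y := V_2X$, whose columns lie in $\ran V_2$, solves the restricted Sylvester equation
\[
Y\Lambda_1 - (V_2\Lambda_2V_2^T)Y = V_2R, \qquad V_2R = V_2E_{2,1} + (V_2E_{2,2}V_2^T)Y - YE_{1,1} - YE_{1,2}X.
\]
By the definition of $\sep_{(2,\infty),V_2}$ and $\gap \le \sep_{(2,\infty),V_2}(\Lambda_1, V_2\Lambda_2V_2^T)$ we get $\normtwoinf{Y} \le \normtwoinf{V_2R}/\gap$, and expanding $V_2R$ via the triangle inequality and both submultiplicative relations for $\normtwoinf{\cdot}$ gives
\[
\gap\,\normtwoinf{Y} \le \normtwoinf{V_2E_{2,1}} + \normtwoinf{V_2E_{2,2}V_2^T}\,\|X\|_2 + \normtwoinf{Y}\,\|E\|_2(1 + \|X\|_2).
\]
Since $\|E\|_2(1+\|X\|_2)/\gap$ is bounded well below $1$ under the hypothesis, the $\normtwoinf{Y}$ term on the right is absorbed, leaving $\normtwoinf{Y}$ at most a fixed constant times $(\normtwoinf{V_2E_{2,1}} + \normtwoinf{V_2E_{2,2}V_2^T}\|X\|_2)/\gap$; substituting the bound on $\|X\|_2$ into the second piece and collecting constants produces the $2\normtwoinf{V_2E_{2,1}}/\gap$ and $4\normtwoinf{V_2E_{2,2}V_2^T}\|E_{2,1}\|_2/(\gap\times\sep_2(\Lambda_1,\Lambda_2))$ terms.

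The one genuinely new idea here is recognizing that $\normtwoinf{V_2X}$ is governed by a Sylvester equation that itself lives inside $\ran V_2$, so that the correct denominator is $\sep_{(2,\infty),V_2}$ and not the possibly vanishing unrestricted $\sep_{(2,\infty)}$; everything else is bookkeeping. I expect the fiddliest points to be (i) checking that the Procrustes alignment collapses to $(V_1+V_2X)(I+X^TX)^{-1/2}$ for an arbitrary orthonormal representative $\hV_1$, and (ii) ordering the two bootstrap steps — first $\|X\|_2$, then $\normtwoinf{Y}$ — so that the numerical constants $5$, $8$, $2$, $4$ close up cleanly under only the assumption $\|E\|_2 \le \gap/5$.
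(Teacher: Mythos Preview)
Your proposal is essentially the paper's proof: the same parametrization $\hV_1\tU = (V_1+V_2X)(I+X^TX)^{-1/2}$, the same splitting into the $V_1$ and $V_2$ components, and the same key step of pushing the $V_2X$ piece through a Sylvester equation in $\ran V_2$ so that $\sep_{(2,\infty),V_2}$ appears in the denominator. The one substantive difference is that the paper \emph{constructs} $\hX$ as a root of the quadratic map via a Newton--Kantorovich argument (obtaining $\|\hX\|_2 \le 4\|E_{2,1}\|_2/\sep_2(\Lambda_1,\Lambda_2)$ as a byproduct), whereas you define $X$ directly from the already-existing $\hV_1$ and bound $\|X\|_2$ by a bootstrap/continuity argument on the Riccati equation; both routes work, yours is arguably more elementary, and either set of constants closes under $\|E\|_2\le\gap/5$ once the quadratic-in-$\|X\|_2$ inequality is solved carefully (your flagged point~(ii)).
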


\begin{corollary}
\label{cor:main}
Following the notation of Theorem~\ref{thm:main} and under the same assumptions, we have that 
\begin{align}
\min\{\normtwoinf{\hV_1U - V_1}:U\in\orth^r\} &\le 8\normtwoinf{V_1}\left(\frac{\|E_{2,1}\|_2}{\sep_2(\Lambda_1,\Lambda_2)}\right)^2\\&\phantom{\le}+ 2\frac{\normtwoinf{V_2E_{2,1}}}{\gap} + 4\frac{\normtwoinf{V_2E_{2,2}V_2^T}\|E_{2,1}\|_2}{\gap\times \sep_2(\Lambda_1,\Lambda_2)}, \nonumber
\label{eq:main}
\end{align}
where $\hV_1$ is any matrix with orthonormal columns whose range is the dominant $r$-dimensional invariant subspace of $\hA.$
\end{corollary}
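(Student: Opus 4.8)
The plan is to obtain the Corollary from Theorem~\ref{thm:main} in one line — the $\tU$ produced there lies in $\orth^r$, so $\min\{\normtwoinf{\hV_1 U - V_1}:U\in\orth^r\}\le\normtwoinf{\hV_1\tU - V_1}$, and the right-hand side is bounded by Theorem~\ref{thm:main}, whose bound does not depend on which orthonormal basis $\hV_1$ of the dominant subspace of $\hA$ was chosen. So the substance is Theorem~\ref{thm:main}, which I would prove via the classical Stewart parametrization of a nearby invariant subspace. First I would use $\|E\|_2\le\gap/5\le\sep_2(\Lambda_1,\Lambda_2)/5$, Weyl's inequality, and Lemma~\ref{lem:sep_diag} to verify that $\hat\lambda_r>\hat\lambda_{r+1}$, so that the dominant $r$-dimensional subspace of $\hA$ is well defined, and that $\sep_2(\hat\Lambda_1,\Lambda_2)\ge\frac{4}{5}\sep_2(\Lambda_1,\Lambda_2)>0$, where $\hat\Lambda_1=\hV_1^T\hA\hV_1$. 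The Sylvester identity $(V_2^T\hV_1)\hat\Lambda_1-\Lambda_2(V_2^T\hV_1)=V_2^TE\hV_1$ together with $\|V_2^TE\hV_1\|_2\le\|E\|_2$ then gives $\|V_2^T\hV_1\|_2\le\frac{1}{4}$; in particular the column space of $\hV_1$ meets that of $V_2$ only at the origin, so there is a unique $P\in\reals^{(n-r)\times r}$ such that $V_1+V_2P$ spans the dominant subspace of $\hA$. I would then take $\hV_1=(V_1+V_2P)(I+P^TP)^{-1/2}$: since $V_1^T\hV_1=(I+P^TP)^{-1/2}$ is symmetric positive definite, this is exactly the Procrustes-aligned basis, so the quantity we bound is the one in the statement.

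Writing $\hV_1-V_1=V_1\bigl[(I+P^TP)^{-1/2}-I\bigr]+V_2P(I+P^TP)^{-1/2}$ and applying the submultiplicative inequality $\normtwoinf{BC}\le\normtwoinf{B}\|C\|_2$ with $\|(I+P^TP)^{-1/2}-I\|_2\le\frac{1}{2}\|P\|_2^2$ and $\|(I+P^TP)^{-1/2}\|_2\le1$ reduces the theorem to bounding $\|P\|_2$ and $\normtwoinf{V_2P}$. Both follow from the Riccati equation that invariance of the dominant subspace of $\hA$ forces on $P$: substituting the parametrization into $(I-\hV_1\hV_1^T)\hA\,\hV_1\hV_1^T=0$ and projecting onto the column spaces of $V_1$ and $V_2$ yields $P\Lambda_1-\Lambda_2P=E_{2,1}+E_{2,2}P-PE_{1,1}-PE_{1,2}P$. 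For $\|P\|_2$ I would test this against the definition of $\sep_2(\Lambda_1,\Lambda_2)$, use $\|E_{i,j}\|_2\le\|E\|_2$ and $\|P\|_2<1$ (a consequence of $\|V_2^T\hV_1\|_2\le\frac14$), and absorb the terms proportional to $\|P\|_2$ using $\|E\|_2\le\gap/5\le\sep_2(\Lambda_1,\Lambda_2)/5$, obtaining $\|P\|_2\le c_1\|E_{2,1}\|_2/\sep_2(\Lambda_1,\Lambda_2)$ with a small explicit $c_1$.

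For $\normtwoinf{V_2P}$ I would multiply the Riccati equation on the left by $V_2$, giving $(V_2P)\Lambda_1-(V_2\Lambda_2V_2^T)(V_2P)=V_2\bigl(E_{2,1}+E_{2,2}P-PE_{1,1}-PE_{1,2}P\bigr)$. Since $V_2P$ lies in the column space of $V_2$, the definition of the restricted separation then yields $\normtwoinf{V_2P}\le\normtwoinf{V_2(E_{2,1}+E_{2,2}P-PE_{1,1}-PE_{1,2}P)}/\sep_{(2,\infty),V_2}(\Lambda_1,V_2\Lambda_2V_2^T)$, the denominator being positive by (for example) Lemma~\ref{lem:sepU_lowerbound} and at least $\gap$ by the definition of $\gap$. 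Expanding the numerator, the $E_{2,1}$ term contributes $\normtwoinf{V_2E_{2,1}}$; the $E_{2,2}P$ term is at most $\normtwoinf{V_2E_{2,2}}\|P\|_2\le\normtwoinf{V_2E_{2,2}V_2^T}\|P\|_2$ (using $\normtwoinf{V_2E_{2,2}}=\normtwoinf{(V_2E_{2,2}V_2^T)V_2}\le\normtwoinf{V_2E_{2,2}V_2^T}$), into which the bound on $\|P\|_2$ is inserted; and the $PE_{1,1}$ and $PE_{1,2}P$ terms are at most a small multiple of $\normtwoinf{V_2P}$ (using $\|E\|_2\le\gap/5\le\sep_{(2,\infty),V_2}/5$ and $\|P\|_2<1$) and are moved to the left. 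This produces $\normtwoinf{V_2P}\le c_2\normtwoinf{V_2E_{2,1}}/\gap+c_3\normtwoinf{V_2E_{2,2}V_2^T}\,\|E_{2,1}\|_2/(\gap\,\sep_2(\Lambda_1,\Lambda_2))$; feeding $\|P\|_2$ and $\normtwoinf{V_2P}$ back into the split of the previous paragraph and tracking the (small) constants gives the stated inequality, with $8$, $2$, $4$ comfortably large enough.

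The step I expect to be hardest is the $\normtwoinf{V_2P}$ bound. It hinges on recognizing that the leading part of the Riccati right-hand side stays inside the column space of $V_2$, so the correct object to invert against is the \emph{restricted} separation $\sep_{(2,\infty),V_2}(\Lambda_1,V_2\Lambda_2V_2^T)$ rather than an unrestricted $\sep_{(2,\infty)}$, which can be vacuous — and this is also exactly what frees the hypothesis from any spurious requirement that $\Lambda_1$ be separated from zero. It also requires controlling the bilinear corrections $E_{2,2}P$, $PE_{1,1}$, $PE_{1,2}P$ in a norm that is \emph{not} unitarily invariant from the left, which is what forces the appearance of $\normtwoinf{V_2E_{2,1}}$ and $\normtwoinf{V_2E_{2,2}V_2^T}$ in place of the naive $\normtwoinf{E_{2,1}}$-type quantities. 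The remaining ingredients — the Stewart parametrization, the $\sin\Theta$-type estimate, and the spectral bound on $\|P\|_2$ — are routine given $\|E\|_2\le\gap/5$.
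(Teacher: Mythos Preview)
Your deduction of the Corollary from Theorem~\ref{thm:main} is exactly the paper's: the Procrustes $\tU$ is a feasible point for the minimum, so the infimum over $U\in\orth^r$ inherits the bound. Your sketch of Theorem~\ref{thm:main} also follows the paper's overall strategy\emthin Stewart parametrization $\hV_1=(V_1+V_2P)(I+P^TP)^{-1/2}$, the Riccati equation for $P$, the split $\hV_1-V_1=V_1[(I+P^TP)^{-1/2}-I]+V_2P(I+P^TP)^{-1/2}$, and the use of the restricted separation $\sep_{(2,\infty),V_2}$ to control $\normtwoinf{V_2P}$\emthin with one genuine methodological difference. The paper establishes existence of $P$ together with the bound $\|P\|_2\le 4\|E_{2,1}\|_2/\sep_2(\Lambda_1,\Lambda_2)$ by applying the Newton--Kantorovich theorem to the Riccati map, and only afterwards verifies via Bauer--Fike that the resulting subspace is the dominant one. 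You instead begin with Weyl's inequality to certify the dominant subspace exists, use a Davis--Kahan-type Sylvester identity to get $\|V_2^T\hV_1\|_2\le\frac14$ (so the parametrization is well posed and $\|P\|_2<1$ a priori), and then read off the $\|P\|_2$ bound directly from the Riccati equation. Your route is more elementary\emthin no fixed-point machinery\emthin and yields slightly sharper intermediate constants; the paper's has the virtue of packaging existence and the quantitative bound in a single step. The second half of the argument, where $\sep_{(2,\infty),V_2}$ enters and $\normtwoinf{V_2E_{2,1}}$ and $\normtwoinf{V_2E_{2,2}V_2^T}$ appear, is the same in both structure and spirit.
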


First, we briefly remark on the assumptions and implications of Theorem~\ref{thm:main}. The condition $\|E\|_2 \le \frac{\gap}{5}$ is standard in the literature; it ensures the two parts of $\Lambda(\hA)$ corresponding to the $r$-largest eigenvalues and the $n-r$ smallest eigenvalues of $A$ are disjoint.\footnote{Technically, the constant in the denominator just needs to be bigger than 4.} The first term on the right hand side is also expected, it looks like a traditional Davis-Kahan bound reduced by of the incoherence of $V_1$ and an additional factor of $\|E\|_2.$ The second term captures how (in)coherent $V_2V_2^TEV_1$ is, a term we often expect to be well controlled. Lastly, the third term is controlled by the incoherence of $E$ itself (relative to its spectral norm).\footnote{Note that if we define $\mu=\sqrt{n}\normtwoinf{V_1}$ it is possible to further simplify the bound by observing that $\normtwoinf{V_2V_2^TE} \leq \|V_2V_2^T\|_{\infty}\normtwoinf{E} \leq (1+\mu^2)\normtwoinf{E}.$} 

As we will see later, it is often the case that either the second or third term dominates the upper bound. In fact, in Section~\ref{sec:observations} we provide an illustrative example showing that both terms are necessary as part of our bound and are effectively tight. Furthermore, many random models for $E$ have the property that $\normtwoinf{V_2E_{2,2}V_2^T}$ and $\|E_{2,1}\|_2$ are on the same order, so in these settings using Theorem~\ref{thm:main} may only marginally improve on the classical Davis-Kahan bound. However, the third term can be more sharply controlled when $E$ is drawn from suitable random models by modifying the proof \emthin Theorem~\ref{thm:probG} explicitly shows how such an improvement is constructed. Lastly, in Section~\ref{sec:observations} we will argue that the presence of $\sep_{(2,\infty),V_2}(\Lambda_1,V_2\Lambda_2V_2^T)$ is essential, though Lemma~\ref{lem:sepU_lowerbound} provides some control over it via more interpretable quantities.

\begin{remark}
Of particular note, when $A$ is rank $r$ and, therefore, $\Lambda_2 = 0$ Theorem~\ref{thm:main} simplifies significantly since 
\[
\gap = \sep_2(\Lambda_1,\Lambda_2) =\sep_{(2,\infty),V_2}(\Lambda_1,V_2\Lambda_2V_2^T) = \min_{i=1,\ldots,r}\lvert \lambda_i \rvert.
\]
\end{remark} 

\begin{remark}
Eq.~\eqref{eq:mainP} of Theorem~\ref{thm:main} is a particularly useful result since there are circumstances where it is possible to estimate $\tU$ given only $\hV_1$ and some structural assumptions about $V_1.$ Algorithms based around this paradigm have been developed for spectral clustering~\cite{damle2018simple} and localization of basis functions in Kohn-Sham Density Functional Theory~\cite{damle2015compressed,damle2017computing}. 
\end{remark}

Prior to embarking on a proof of the main result, we present a corollary of independent interest. Corollary~\ref{cor:infBound} simplifies our result in the case where the infinity norm of $E$ is sufficiently bounded relative to the spectral gap and the incoherence of $V_1.$ This assumption allows us to remove the third term in the upper bound of Theorem~\ref{thm:main}.

\begin{corollary}
\label{cor:infBound}
Let $A\in\reals^{n\times n}$ be symmetric with eigen-decomposition 
\[
A = V_1\Lambda_1V_1^T +V_2\Lambda_2V_2^T
\] 
following the conventions of~\eqref{eqn:eigen-decomp},
\[
\gap = \min\{\sep_2(\Lambda_1,\Lambda_2),\sep_{(2,\infty),V_2}(\Lambda_1,V_2\Lambda_2V_2^T)\},\]
and $\mu = \sqrt{n}\normtwoinf{V_1}$. If $\|E\|_2 \le \frac{\gap}{5}$ and $\|E\|_{\infty}\leq \gap / (4+4\mu^2)$ then
\begin{equation*}
\normtwoinf{\hV_1\tU - V_1} \le 8\normtwoinf{V_1}\left(\frac{\|E_{2,1}\|_2}{\sep_2(\Lambda_1,\Lambda_2)}\right)^2 + 4\frac{\normtwoinf{V_2E_{2,1}}}{\gap},
\end{equation*}
where $\hV_1$ is any matrix with orthonormal columns whose range is the dominant $r$-dimensional invariant subspace of $\hA$ and $\tU$ solves the orthogonal Procrustes problem
\begin{equation*}
\min\{\|\hV_1U - V_1\|_F:U\in\orth^r\}.
\end{equation*}
\end{corollary}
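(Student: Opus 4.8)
The plan is to start from the conclusion of Theorem~\ref{thm:main}, namely
\[
\normtwoinf{\hV_1\tU - V_1} \le 8\normtwoinf{V_1}\left(\frac{\|E_{2,1}\|_2}{\sep_2(\Lambda_1,\Lambda_2)}\right)^2 + 2\frac{\normtwoinf{V_2E_{2,1}}}{\gap} + 4\frac{\normtwoinf{V_2E_{2,2}V_2^T}\|E_{2,1}\|_2}{\gap\times\sep_2(\Lambda_1,\Lambda_2)},
\]
which holds whenever $\|E\|_2 \le \gap/5$; this hypothesis is also assumed in the corollary. The only work is therefore to show that, under the extra assumption $\|E\|_{\infty} \le \gap/(4+4\mu^2)$, the third term is dominated by the second term, so that the sum of the second and third terms is at most $4\normtwoinf{V_2E_{2,1}}/\gap$.

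The key step is to bound $\normtwoinf{V_2E_{2,2}V_2^T}$. Since $V_2E_{2,2}V_2^T = V_2V_2^TEV_2V_2^T = (I - V_1V_1^T)E(I-V_1V_1^T)$, I would use the sub-multiplicative relation $\normtwoinf{B_1B_2}\le\norminf{B_1}\normtwoinf{B_2}$ from the preliminaries to write $\normtwoinf{V_2E_{2,2}V_2^T} \le \|V_2V_2^T\|_{\infty}\normtwoinf{E(I-V_1V_1^T)} \le \|V_2V_2^T\|_{\infty}\,\norminf{E}\,\normtwoinf{I-V_1V_1^T}$; alternatively, and as hinted in the footnote after Theorem~\ref{thm:main}, one can push the $\|\cdot\|_{\infty}$ factor onto $E$ directly, $\normtwoinf{V_2V_2^TE} \le \|V_2V_2^T\|_{\infty}\normtwoinf{E}$, and then estimate $\|V_2V_2^T\|_{\infty} = \|I - V_1V_1^T\|_{\infty} \le 1 + \|V_1V_1^T\|_{\infty} \le 1 + \mu^2$, using $\|V_1V_1^T\|_{\infty} \le \normtwoinf{V_1}\cdot\normtwoinf{V_1^T}$ — wait, more carefully, $\|V_1V_1^T\|_\infty \le n\normtwoinf{V_1}^2 = \mu^2$ via Cauchy--Schwarz on each entry. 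Combining gives $\normtwoinf{V_2E_{2,2}V_2^T} \le (1+\mu^2)\norminf{E} \le (1+\mu^2)\|E\|_{\infty}$, and a further application of $\normtwoinf{E}\le\norminf{E}$-type bounds (or simply $\norminf{E}\le\normtwo{E}$ is \emph{not} needed here) keeps everything in terms of $\|E\|_{\infty}$.

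With this in hand, the third term becomes
\[
4\frac{\normtwoinf{V_2E_{2,2}V_2^T}\|E_{2,1}\|_2}{\gap\times\sep_2(\Lambda_1,\Lambda_2)} \le 4\frac{(1+\mu^2)\|E\|_{\infty}}{\sep_2(\Lambda_1,\Lambda_2)}\cdot\frac{\|E_{2,1}\|_2}{\gap}.
\]
The assumption $\|E\|_{\infty} \le \gap/(4+4\mu^2) = \gap/(4(1+\mu^2))$ makes the prefactor $4(1+\mu^2)\|E\|_{\infty}/\sep_2(\Lambda_1,\Lambda_2) \le \gap/\sep_2(\Lambda_1,\Lambda_2) \le 1$ (since $\gap \le \sep_2(\Lambda_1,\Lambda_2)$ by definition of the minimum). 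Hence the third term is at most $\|E_{2,1}\|_2/\gap$, which I would bound by $\normtwoinf{V_2E_{2,1}}/\gap$ — \emph{careful}: that inequality goes the wrong way in general since $\normtwoinf{\cdot}\le\normtwo{\cdot}$. Instead I should be slightly less aggressive: keep the prefactor bound as $4(1+\mu^2)\|E\|_\infty/\sep_2 \le 1$ but then note the third term is $\le \|E_{2,1}\|_2/\gap$, and this does \emph{not} directly combine with $2\normtwoinf{V_2E_{2,1}}/\gap$. The clean fix, and I expect this is what the authors intend, is to instead bound $\|E_{2,1}\|_2 = \normtwo{V_2^TEV_1}$ against $\normtwoinf{V_2E_{2,1}}$ is impossible, so one should rather re-derive: the cleanest route is to observe $\normtwoinf{V_2E_{2,2}V_2^T}\le\|V_2V_2^T\|_\infty\normtwoinf{V_2E_{2,1}}\cdot(\text{something})$ — hmm, that is not right dimensionally either. \textbf{The main obstacle} is precisely this bookkeeping: getting the third term absorbed into $2\normtwoinf{V_2E_{2,1}}/\gap$ (turning the constant $2$ into $4$) requires showing $4\normtwoinf{V_2E_{2,2}V_2^T}\|E_{2,1}\|_2/(\gap\,\sep_2) \le 2\normtwoinf{V_2E_{2,1}}/\gap$, i.e. $2\normtwoinf{V_2E_{2,2}V_2^T}\|E_{2,1}\|_2/\sep_2 \le \normtwoinf{V_2E_{2,1}}$. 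This does not follow from norm inequalities alone; it must come from re-examining the proof of Theorem~\ref{thm:main} in Section~\ref{subsec:proof}, where the third term arises from a specific expansion term of the form $\normtwoinf{V_2E_{2,2}V_2^T\hV_1\cdots}$ that can alternatively be grouped with the $\normtwoinf{V_2E_{2,1}}$ term when $\|E\|_\infty$ is small. So the right plan is: revisit the proof of Theorem~\ref{thm:main}, identify the step producing the third summand, and re-bound that step under the hypothesis $\|E\|_{\infty}\le\gap/(4+4\mu^2)$ using $\normtwoinf{V_2E_{2,2}V_2^T}\le(1+\mu^2)\|E\|_\infty\le\gap/4$, so that factor combines with $\|E_{2,1}\|_2/\sep_2(\Lambda_1,\Lambda_2)\le\|E\|_2/\gap\cdot(\gap/\sep_2)\le 1/5$-type bounds and the series of geometric-type estimates in that proof collapses the third contribution into the coefficient of $\normtwoinf{V_2E_{2,1}}$, upgrading its constant from $2$ to $4$. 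Everything else in the corollary is then immediate from Theorem~\ref{thm:main}.
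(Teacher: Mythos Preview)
Your diagnosis is right in two respects: the corollary cannot be obtained by post-processing the final three-term bound of Theorem~\ref{thm:main} (the inequality $\|E_{2,1}\|_2 \le \normtwoinf{V_2E_{2,1}}$ indeed goes the wrong way), and one must return to the proof of the theorem. You also correctly isolate the ingredient $\|V_2V_2^T\|_\infty \le 1+\mu^2$.

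What is missing is the precise mechanism. Your sketch still proposes to bound $\normtwoinf{V_2E_{2,2}V_2^T}$ on its own and then pair it with $\|\hY\|_2 \lesssim \|E_{2,1}\|_2/\sep_2(\Lambda_1,\Lambda_2)$; that is exactly how the third term of Theorem~\ref{thm:main} was produced in the first place, and no amount of ``$\|E_{2,1}\|_2/\sep_2 \le 1/5$--type bounds'' will convert $\|E_{2,1}\|_2/\gap$ into $\normtwoinf{V_2E_{2,1}}/\gap$. The paper's move is different: at the stage of~\eqref{eqn:thm_split},
\[
\normtwoinf{\hY} \le \frac{2\normtwoinf{V_2E_{2,1}}}{\gap} + \frac{2\normtwoinf{V_2V_2^TE\hY}}{\gap},
\]
one keeps $\hY$ attached and bounds the second term \emph{self-referentially} in $\normtwoinf{\hY}$,
\[
\normtwoinf{V_2V_2^TE\hY} \le \|V_2V_2^T\|_\infty\,\normtwoinf{E\hY} \le (1+\mu^2)\,\|E\|_\infty\,\normtwoinf{\hY},
\]
using $\normtwoinf{B_1B_2}\le\|B_1\|_\infty\normtwoinf{B_2}$ twice. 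Moving this to the left gives
\[
\normtwoinf{\hY}\Big(1 - \tfrac{2(1+\mu^2)\|E\|_\infty}{\gap}\Big) \le \frac{2\normtwoinf{V_2E_{2,1}}}{\gap},
\]
and the hypothesis $\|E\|_\infty \le \gap/(4+4\mu^2)$ makes the parenthesis at least $1/2$, whence $\normtwoinf{\hY} \le 4\normtwoinf{V_2E_{2,1}}/\gap$. Plugging this into~\eqref{eq:halfMain} finishes the corollary. Your ``geometric-type estimates'' phrase is in the right spirit (this is the closed form of iterating $a \le b + ca$), but the essential point you did not name is that the right-hand side must be controlled by $\normtwoinf{\hY}$, not $\|\hY\|_2$; the quantity $\|E_{2,1}\|_2/\sep_2$ plays no role here.
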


\subsection{Related work}
The most closely related results to our own are the two-to-infinity bounds in \cite{cape2017Twotoinfinity}, though other results exist for single eigenvectors~\cite{eldridge2017Unperturbed} and for similar, though distinct, measures of subspace perturbations~\cite{abbe2017Entrywise}. The results in~\cite{cape2017Twotoinfinity} concern orthonormal bases of the singular subspaces of (possibly non-symmetric) matrices. However, when specialized to orthonormal bases of the invariant subspaces of symmetric matrices our results lead to sharper bounds. Specifically, in \cite{cape2017Twotoinfinity} the authors establish a general decomposition of $\hV_1\tU - V_1$, where $\tU$ solves
\[
\min\{\|\hV_1U - V_1\|_F:U\in\orth^r\},
\]
and deduce general bounds on $\normtwoinf{\hV_1\tU - V_1}$ through repeated use of the triangle inequality. 

More concretely, the first bound in~\cite{cape2017Twotoinfinity} (Theorem 3.7) is most similar to our main result, albeit proved in a significantly different manner. For symmetric positive definite matrices, combining Theorems 3.7 and 6.9 in~\cite{cape2017Twotoinfinity} (and recasting them using our notation) yields
\begin{align}
\label{eqn:cape2017}
\normtwoinf{\hV_1\tU - V_1} &\le 4 \normtwoinf{V_1}\left(\frac{\|E\|_2}{\sep_2(\Lambda_1,\Lambda_2)}\right)^2 + 2 \frac{\normtwoinf{V_2E_{2,1}}}{\lambda_r}\\&\phantom{\le} + 4 \frac{\normtwoinf{V_2E_{2,2}V_2^T}\|E\|_2}{\lambda_r \sep_2(\Lambda_1,\Lambda_2)} + 4 \frac{\normtwoinf{V_2\Lambda_2 V_2^T}\|E\|_2}{\lambda_r \sep_2(\Lambda_1,\Lambda_2)}.\nonumber
\end{align}

In the case where $A$ has rank $r$~\eqref{eqn:cape2017} is comparable to our main result.\footnote{While we are able to use the potentially smaller quantity $\|E_{2,1}\|_2$ in place of $\|E\|_2,$ for many random models on $E$ these two quantities behave essentially the same.} However, if $A$ is not low-rank our result implies tighter upper bounds. In this case, $\normtwoinf{V_2\Lambda_2V_2^T}$ is non-zero, and the right hand side of the bound from~\cite{cape2017Twotoinfinity} in~\eqref{eqn:cape2017} is dominated by a term depending on $\|E\|_2$\emthin the same behavior immediately implied by Davis-Kahan. If, for example, $E\in\reals^{n\times n}$ is a matrix of \iid\ $\mathcal{N}(0,1/n^2)$ random scalars, their bound implies $\normtwoinf{\hV_1\tU - V_1}$ vanishes at the rate $\tilde{\mathcal{O}}\left(1/\sqrt{n}\right)$. On the other hand, our bound shows that $\normtwoinf{\hV_1\tU - V_1}$ vanishes at the faster rate $\tilde{\mathcal{O}}(1/n)$\textemdash an observation illustrated in Section~\ref{sec:numerics}.

In \cite{abbe2017Entrywise} the authors develop similar results to Theorem~\ref{thm:main} as corollaries to their main results. Specifically, their final expressions in Theorem 2.1 and Corollary 2.1 are row-wise perturbations bounds on orthonormal bases of invariant subspaces. However, this is not the main focus of their work, and these results are generally looser than our bounds. Furthermore, in contrast to our results their bounds are not purely deterministic; they rely on probabilistic assumptions on the error matrix $E$ and additional assumptions on $A$ itself.

\subsection{Proof of the main result} 
\label{subsec:proof}
At a high level, our proof has two parts. In the first part, we develop a specific characterization of $\hV_1$ parametrized by a matrix $\hX$ that is a root of a quadratic matrix equation. In the second part, we show that $\normtwoinf{\hV_1 - V_1}$ is small under our stated assumptions. Throughout the proof we extensively leverage notation from Section~\ref{sec:prelim} to refer to projections of arbitrary matrices $B$ with respect to the representations $V_1$ and $V_2$ of invariant subspaces associated with $A$\emthin recall that for any $B\in\reals^{n\times n}$ $B_{i,j} = V_i^TBV_j.$

{\it Part 1:} Our starting point is the bound
\[
\min\{\|\hV_1U - V_1\|_{2,\infty}:U\in\orth^r\} \le \|\hV_1\tU - V_1\|_{2,\infty},
\]
where $\tU$ is the solution of the orthogonal Procrustes problem:
\begin{equation}
\min\{\|\hV_1U - V_1\|_F:U\in\orth^r\}.
\label{eq:orthogonalProcrustesProblem}
\end{equation}
Notably, the solution to this problem is well known and computable given $\hV_1$ and $V_1,$ which will prove useful in our numerical experiments. More pertinent to our needs at the moment, $\hV_1\tU$ is the closest matrix with orthonormal columns to $V_1$ in Frobenius norm whose range is the dominant $r$-dimensional invariant subspace of $\hA$.

We start by constructing a matrix with orthonormal columns $\hV_1$ whose range is the dominant $r$-dimensional invariant subspace of $\hA$ and a matrix $\hV_2$ characterizing the orthogonal complement of $\hV_1$. We will pick $\hV_1$ such that the solution to~\eqref{eq:orthogonalProcrustesProblem} is the identity, thereby simplifying the remainder of the proof. Nevertheless, any bound for this specific choice of $\hV_1$ simultaneously holds for any orthonormal basis of $\ran \hV_1$ since the discrepancy may be formally resolved by solving the orthogonal Procrustes problem. Specifically, consider
\begin{equation}
\hV_1 = (V_1 + V_2\hX)(I_r + \hX^T\hX)^{-\frac12},
\label{eq:hV1}
\end{equation}
and
\begin{equation}
\hV_2 = (V_2 - V_1\hX^T)(I_{(n-r)} + \hX\hX^T)^{-\frac12}
\label{eq:hV2}
\end{equation}
for some $\hX\in\reals^{(n-r)\times r}$. 

\begin{remark}
A clean construction of this characterization is to start with the general formula for an arbitrary invariant subspace $\hV_1 = V_1H + V_2X$ for some $H\in\reals^{r\times r}$ and $X\in\reals^{(n-r)\times r}.$ Requiring that $\hV_1^T\hV_1 = I$ ensures that $H$ is non-singular as long as $\|X\|_2 < 1,$ which is guaranteed by our assumptions. Letting $\hX = XH^{-1}$ the condition that $\hV_1$ has orthonormal columns shows that
\[
H^2 + H\hX^T\hX H = I.
\] 
Multiplying on the left and right by $H^{-1}$ we conclude that $H^{-2} = I+\hX^T\hX$ and arrive at~\eqref{eq:hV1}.
\end{remark}

It is not hard to check that $\hV_1$ and $\hV_2$ have orthonormal columns and their ranges are complementary subspaces of $\reals^n$. Thus $\ran\hV_1$ is an invariant subspace of $\hA$ if and only if 
\begin{equation}
\begin{aligned}
0 &= \hV_2^T\hA\hV_1 = -\hA_{2,1} + \hX\hA_{1,1} - \hA_{2,2}\hX + \hX\hA_{1,2}\hX.
\end{aligned}
\label{eq:hX}
\end{equation}
In other words, $\hX$ is a root of the map $F:\reals^{(n-r)\times r}\to\reals^{(n-r)\times r}$ defined as 
\[
\begin{aligned}
F:X\to -\hA_{2,1} + X\hA_{1,1} - \hA_{2,2}X + X\hA_{1,2}X.
\end{aligned}
\]
We find a root of $F$ by appealing to a Newton-type method (for root-finding). Starting at $X_0 = 0$, we construct the sequence
\begin{equation}
X_{t+1} \gets X_t - S_{\hA}^{-1}(F(X_t)).
\label{eq:NewtonIterationX}
\end{equation}
To characterize the limit of $(X_t)$ we appeal to the Newton-Kantorovich theorem (Theorem \ref{thm:NewtonKantorovich}). We remark that this construction is similar, but not identical to, that in \cite[\S 3]{stewart1973Error}.

\begin{lemma}
\label{lem:NewtonKantorovichX}
As long as $\|E\|_2 \le \frac{\sep_2(\Lambda_1,\Lambda_2)}{4}$, $(X_t)$ converges to $\hX$ such that $\hX$ satisfies \eqref{eq:hX} and $\|\hX\|_2 \le \frac{4\|E_{2,1}\|_2}{\sep_2(\Lambda_1,\Lambda_2)}$.
\end{lemma}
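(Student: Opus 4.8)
The plan is to verify the hypotheses of the Newton--Kantorovich theorem (Theorem~\ref{thm:NewtonKantorovich}) for the map $F$, whose root $\hX$ is exactly what makes $\ran\hV_1$ an invariant subspace of $\hA$ via~\eqref{eq:hX}. Write $\delta = \sep_2(\Lambda_1,\Lambda_2)$ and observe that $F(X) = -\hA_{2,1} + S_{\hA}(X) + X\hA_{1,2}X$, so $F$ is a degree-two polynomial map with $F'(0) = S_{\hA}$; thus~\eqref{eq:NewtonIterationX} is Newton's iteration with the Jacobian frozen at $X_0 = 0$. Moreover the second derivative of $F$ is the constant bilinear form $(H_1,H_2)\mapsto H_1\hA_{1,2}H_2 + H_2\hA_{1,2}H_1$, so $F'$ is globally Lipschitz with constant at most $2\|\hA_{1,2}\|_2$; since $V_1^TAV_2 = 0$ we have $\hA_{1,2} = E_{1,2}$, giving the Lipschitz constant $K := 2\|E_{1,2}\|_2 \le 2\|E\|_2$.

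Next I would pin down the remaining two Newton--Kantorovich quantities. The initial residual is governed by $F(X_0) = F(0) = -\hA_{2,1}$, and since $V_2^TAV_1 = 0$ we have $\hA_{2,1} = E_{2,1}$, so this residual equals $\|S_{\hA}^{-1}E_{2,1}\|_2$. For the norm of $S_{\hA}^{-1}$, use that $\hA_{1,1} = \Lambda_1 + E_{1,1}$ and $\hA_{2,2} = \Lambda_2 + E_{2,2}$, so the spectral-norm stability of $\sep$ (Proposition~2.1 of~\cite{karow2014Perturbation}) gives
\[
\sep_2(\hA_{1,1},\hA_{2,2}) \ge \delta - \|E_{1,1}\|_2 - \|E_{2,2}\|_2 \ge \delta - 2\|E\|_2 \ge \delta/2 > 0
\]
under the hypothesis $\|E\|_2 \le \delta/4$ (recall $\delta > 0$ since $\lambda_r > \lambda_{r+1}$). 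Positivity of $\sep_2(\hA_{1,1},\hA_{2,2})$ forces $S_{\hA}$ to have trivial kernel, hence to be invertible, and the definition of $\sep_2$ then yields $\|S_{\hA}^{-1}\|_{2\to 2} \le 1/\sep_2(\hA_{1,1},\hA_{2,2}) \le 2/\delta =: \beta$; consequently the initial residual is at most $\eta := \beta\|E_{2,1}\|_2$.

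Finally I would check the Newton--Kantorovich smallness condition and read off the conclusion. Using $\|E_{2,1}\|_2 \le \|E\|_2 \le \delta/4$,
\[
h := \beta K \eta \le 2\beta^2\|E\|_2\|E_{2,1}\|_2 \le 2\left(\frac{2}{\delta}\right)^2\left(\frac{\delta}{4}\right)^2 = \frac{1}{2},
\]
so Theorem~\ref{thm:NewtonKantorovich} applies and $(X_t)$ converges to a root $\hX$ of $F$ — equivalently, one satisfying~\eqref{eq:hX} — lying in the closed ball of radius $(1-\sqrt{1-2h})/(\beta K)\le 2\eta$ about $X_0 = 0$. Hence $\|\hX\|_2 \le 2\eta = 2\beta\|E_{2,1}\|_2 = 4\|E_{2,1}\|_2/\sep_2(\Lambda_1,\Lambda_2)$, the claimed bound. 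I do not expect a serious obstacle here: the only point needing care is the separation bookkeeping — pushing the perturbed estimate for $\sep_2(\hA_{1,1},\hA_{2,2})$ through the chain of inequalities so that $h$ stays at or below the Newton--Kantorovich threshold $1/2$ precisely on the range $\|E\|_2 \le \delta/4$ — and a minor caveat is that Theorem~\ref{thm:NewtonKantorovich} must be invoked in its frozen-Jacobian form; should its statement cover only full Newton, one instead runs the classical majorizing-sequence argument directly for~\eqref{eq:NewtonIterationX}, in the spirit of~\cite[\S3]{stewart1973Error}.
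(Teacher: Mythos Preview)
Your approach is essentially the paper's: verify Newton--Kantorovich for $F$ at $X_0=0$ with $J=S_{\hA}$, using the perturbation stability of $\sep_2$ to control $\|S_{\hA}^{-1}\|_2$. One bookkeeping difference is worth flagging: the paper's stated version of Theorem~\ref{thm:NewtonKantorovich} takes $K$ to be a bound on $\|J^{-1}\circ\partial^2 F\|$ with the (unsymmetrized) convention $\partial^2 F(X)[X_1,X_2]=X_1\hA_{1,2}X_2$, so their $K=\|E_{1,2}\|_2/\sep_2(\hA_{1,1},\hA_{2,2})$ and they land at $h\le 1/4$, safely inside the strict hypothesis $h<1/2$. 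Your symmetrized Hessian picks up an extra factor of $2$, and your chain of estimates gives only $h\le 1/2$, which sits exactly on the boundary where the theorem as stated does not apply. This is not a real obstruction---either adopt the paper's convention, or note that in the downstream use one actually has $\|E\|_2\le\gap/5$, which makes your $h$ strictly less than $1/2$---but as written the final invocation of Theorem~\ref{thm:NewtonKantorovich} needs that tweak.
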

\begin{proof}
We defer the proof to Appendix~\ref{secA:NKX}.
\end{proof}

Since $\hX$ satisfies \eqref{eq:hX}, $\ran\hV_1$ is an invariant subspace of $\hA$. It remains to show that $\ran\hV_1$ is the dominant $r$-dimensional invariant subspace of $\hA$. We block-diagonalize $\hA$ to obtain

\begin{equation}
\label{eqn:AhatDiag}
\begin{bmatrix}\hV_1\mid\hV_2\end{bmatrix}^T\hA\begin{bmatrix}\hV_1\mid\hV_2\end{bmatrix} = \begin{bmatrix}\hV_1^T\hA\hV_1 & 0 \\ 0 & \hV_2^T\hA\hV_2\end{bmatrix}.
\end{equation}
The first diagonal block is
\[
\begin{aligned}
\hV_1^T\hA\hV_1 &= (I_r + \hX^T\hX)^{-\frac12}(V_1 + V_2\hX)^T\hA(V_1 + V_2\hX)(I_r + \hX^T\hX)^{-\frac12} \\
&= (I_r + \hX^T\hX)^{-\frac12}(\hA_{1,1} + \hA_{1,2}\hX + \hX^T\hA_{2,1} + \hX^T\hA_{2,2}\hX)(I_r + \hX^T\hX)^{-\frac12}.
\end{aligned}
\]
Recalling $\hX$ satisfies \eqref{eq:hX}, we have
\[
\hX^T\hA_{2,1} + \hX^T\hA_{2,2}\hX = \hX^T\hX\hA_{1,1} + \hX^T\hX\hA_{1,2}\hX.
\]
Plugging this expression into the right side of the preceding display, we obtain
\[
\begin{aligned}
\hV_1^T\hA\hV_1 &= (I + \hX^T\hX)^{-\frac12}(\hA_{1,1} + \hA_{1,2}\hX + \hX^T\hX\hA_{1,1} + \hX^T\hX\hA_{1,2}\hX)(I + \hX^T\hX)^{-\frac12} \\
&= (I + \hX^T\hX)^{-\frac12}(I + \hX^T\hX)(\hA_{1,1} + \hA_{1,2}\hX)(I + \hX^T\hX)^{-\frac12} \\
&= (I + \hX^T\hX)^{\frac12}(\hA_{1,1} + \hA_{1,2}\hX)(I + \hX^T\hX)^{-\frac12}.
\end{aligned}
\]
In other words, the first diagonal block is similar to $\hA_{1,1} + \hA_{1,2}\hX$. This implies that
\[
\begin{aligned}
\Lambda(\hV_1^T\hA\hV_1) &= \Lambda(\hA_{1,1} + \hA_{1,2}\hX) \\
&= \Lambda(\Lambda_1 + E_{1,1} + E_{1,2}\hX) \\
&\subset\Lambda(\Lambda_1) + (\|E_{1,1}\|_2 + \|E_{1,2}\|_2\|\hX\|_2)[-1,1] & \text{(Bauer-Fike theorem)} \\
&\subset\Lambda(\Lambda_1) + ({\textstyle\|E\|_2 + \frac{4\|E\|_2^2}{\sep_2(\Lambda_1,\Lambda_2)}})[-1,1] &\textstyle \text{($\|\hX\|_2 \le \frac{4\|E\|_2}{\sep_2(\Lambda_1,\Lambda_2)}$)} \\
&\textstyle\subset\Lambda(\Lambda_1) + 2\|E\|_2[-1,1] &\textstyle \text{($\frac{\|E\|_2}{\sep_2(\Lambda_2,\Lambda_2)} \le \frac14$),}
\end{aligned}
\]
where, as before, $\Lambda(A)$ is defined to be the set of eigenvalues of the matrix $A.$

Similarly, it is possible to show that the second diagonal block is similar to $\hA_{2,2} - \hX\hA_{1,2}$ and
\[\textstyle
\Lambda(\hV_2^T\hA\hV_2) \subset\Lambda(\Lambda_2) + 2\|E\|_2[-1,1].
\]
Recalling $\|E\|_2 \le \frac{\sep_2(\Lambda_1,\Lambda_2)}{5}$, we have
\[
\begin{aligned}
\min\{\lambda_1:\lambda_1\in\Lambda(\hV_1^T\hA\hV_1)\} &\ge \lambda_r - 2\|E\|_2\\ &> \lambda_{r+1} + 2\|E\|_2\\ &> \max\{\lambda_2:\lambda_2\in\sigma(\hV_2^T\hA\hV_2)\},
\end{aligned}
\]
which implies $\ran(\hV_1)$ is the dominant $r$-dimensional invariant subspace of $\hA$ as claimed.

Finally, it is well-known that the optimal point $\tU$ of the orthogonal Procrustes problem \eqref{eq:orthogonalProcrustesProblem} is the unitary factor in the polar decomposition of $\hV_1^TV_1.$ Since
\[
\begin{aligned}
\hV_1^TV_1 &= (I + \hX^T\hX)^{-\frac12}(V_1 + V_2\hX)^TV_1 \\
&= (I + \hX^T\hX)^{-\frac12}(V_1^TV_1 + \hX V_2^TV_1) \\
&= (I + \hX^T\hX)^{-\frac12}
\end{aligned}
\]
is symmetric and positive definite, that unitary factor is the identity. Therefore, as desired, $\hV_1$ is the closest matrix to $V_1$ in Frobenius distance among all matrices of the form $\hV_1U$, where $U\in\orth^r$. Note that this is exactly the set of matrices with orthonormal columns whose range is the dominant $r$-dimensional invariant subspace of $\hA$. 

{\it Part 2:} For the remainder of the proof $\hV_1$ is as defined in \eqref{eq:hV1} and we proceed to explicitly bound
\[
\normtwoinf{\hV_1-V_1}.
\]
We start by decomposing the error $\hV_1 - V_1$ into its components in $\ran V_1$ and $(\ran V_1)^\perp$ (recall that $\ran V_2 = (\ran V_1)^\perp$). Specifically, from~\eqref{eq:hV1} it follows that
\begin{equation}
\hV_1-V_1 = V_1((I_r + \hX^T\hX)^{-\frac12} - I_r) + V_2\hX(I+\hX^T\hX)^{-\frac12}.
\label{eq:error}
\end{equation}
We now proceed to address each part of this decomposition of the error separately.

The $(2,\infty)$-norm of the first term on the right side of \eqref{eq:error} is at most
\[
\begin{aligned}
\normtwoinf{V_1((I_r + \hX^T\hX)^{-\frac12} - I_r)} &\leq \normtwoinf{V_1} \|(I_r + \hX^T\hX)^{-\frac12} - I_r\|_2.
\end{aligned}
\]
Since 
\[
\|(I_r + \hX^T\hX)^{-\frac12} - I_r\|_2 = \lvert1-(1+\|\hX\|_2^2)^{-1/2}\rvert,
\]
we can use the fact that for any $x>0$
\begin{align*}
\lvert1-(1+x)^{-1/2}\rvert &= \left\lvert\frac{\sqrt{1+x}-1}{\sqrt{1+x}}\right\rvert \\
&\leq \left\lvert\frac{x}{\sqrt{1+x} + 1}\right\rvert\\
&\leq \frac{1}{2}x
\end{align*}
to conclude that
\[
\|(I_r + \hX^T\hX)^{-\frac12} - I_r\|_2 \leq \frac{1}{2}\|\hX\|_2^2.
\]
Furthermore, by Lemma \ref{lem:NewtonKantorovichX} $\|\hX\|_2 \le \frac{4\|E_{2,1}\|_2}{\sep_2(\Lambda_1,\Lambda_2)}$ and therefore
\[
\normtwoinf{V_1((I_r + \hX^T\hX)^{-\frac12} - I_r)} \le 8\normtwoinf{V_1}\left(\frac{\|E_{2,1}\|_2}{\sep_2(\Lambda_1,\Lambda_2)}\right)^2.
\]

At this point we have control over the first term in~\eqref{eq:error}. In addition, since the $(2,\infty)$-norm of the second term on the right side of \eqref{eq:error} is at most
\[
\normtwoinf{V_2\hX(I_r+\hX^T\hX)^{-\frac12}} \le \normtwoinf{V_2\hX}\|(I_r+\hX^T\hX)^{-\frac12}\|_2 \le \normtwoinf{V_2\hX}
\]
it follows from the triangle inequality that
\begin{equation}
\normtwoinf{\hV_1 - V_1} \le 8\normtwoinf{V_1}\left(\frac{\|E_{2,1}\|_2}{\sep_2(\Lambda_1,\Lambda_2)}\right)^2 + \normtwoinf{V_2\hX}.
\label{eq:halfMain}
\end{equation}

For the remainder of the proof we focus on bounding $\normtwoinf{V_2\hX}$. At first glance, we are tempted to appeal to the compatibility of $\normtwoinf{\cdot}$ and $\|\cdot\|_2$ to obtain
\[
\normtwoinf{V_2\hX} \le \normtwoinf{V_2}\|\hX\|_2 \le \normtwoinf{V_2}\frac{2\|E_{2,1}\|_2}{\sep_2(\Lambda_1,\Lambda_2)}.
\]
Unfortunately, this bound is generally inadequate because $\normtwoinf{V_2}$ may be much larger than $\normtwoinf{V_1}$.\footnote{In fact, we have that $\normtwoinf{\begin{bmatrix}V_1 & V_2 \end{bmatrix}}=1.$} Instead, we must study $\normtwoinf{V_2\hX}$ directly. To start, observe that $V_2\hX$ satisfies
\begin{equation}
\label{eq:hY}
0 = -V_2\hA_{2,1} + V_2\hX\hA_{1,1} - V_2\hA_{2,2}V_2^TV_2\hX + V_2\hX\hA_{1,2}V_2^TV_2\hX.
\end{equation}
In other words, $V_2\hX$ is a root of the map $G:\reals^{n\times r}\to\reals^{n\times r}$ defined as 
\[
\begin{aligned}
G:Y\to -V_2\hA_{2,1} + Y\hA_{1,1} - V_2\hA_{2,2}V_2^TY + Y\hA_{1,2}V_2^TY.
\end{aligned}
\]
Letting $\hY = V_2\hX,$ we rearrange~\eqref{eq:hY} to obtain
\[
\begin{aligned}
\hY\hA_{1,1} - V_2\Lambda_2V_2^T\hY &= -V_2\hA_{2,1} + \hY\hA_{1,2}V_2^T\hY + V_2E_{2,2}V_2^T\hY \\
&= -V_2E_{2,1} + \hY E_{1,2}V_2^T\hY + V_2E_{2,2}V_2^T\hY,
\end{aligned}
\]
where we have used that $A_{1,2}=A_{2,1}^T=0.$ 

We take norms to see that
\[
\begin{aligned}
&\normtwoinf{\hY\hA_{1,1} - V_2\Lambda_2V_2^T\hY} \\
&\quad\le \normtwoinf{V_2E_{2,1}} + \normtwoinf{\hY E_{1,2}V_2^T\hY} + \normtwoinf{V_2E_{2,2}V_2^T\hY} \\
&\quad\le \normtwoinf{V_2E_{2,1}} + \normtwoinf{\hY}\|\hY^TV_2E_{1,2}^T\|_2 + \normtwoinf{V_2E_{2,2}V_2^T\hY} \\
&\quad\le \normtwoinf{V_2E_{2,1}} + \normtwoinf{\hY}\|V_2E_{1,2}^T\|_2\|\hY\|_2 + \normtwoinf{V_2E_{2,2}V_2^T\hY}.
\end{aligned}
\]
It now follows from Lemma~\ref{lem:NewtonKantorovichX} that
\[
\begin{aligned}
&\normtwoinf{\hY\hA_{1,1} - V_2\Lambda_2V_2^T\hY} \\
&\quad\le \normtwoinf{V_2E_{2,1}} + \normtwoinf{\hY}\frac{2\|E_{2,1}\|_2^2}{\gap} + \normtwoinf{V_2E_{2,2}V_2^T\hY}.
\end{aligned}
\]
Next, observe that the left side is at least
\[
\begin{aligned}
\normtwoinf{\hY\hA_{1,1} - V_2\Lambda_2V_2^T\hY} &\ge \normtwoinf{\hY\Lambda_1 - V_2\Lambda_2V_2^T\hY} - \normtwoinf{\hY}\|E_{1,1}\|_2 \\
&\ge \sep_{(2,\infty),V_2}(\Lambda_1,V_2\Lambda_2V_2^T)\normtwoinf{\hY} - \normtwoinf{\hY}\|E_{1,1}\|_2 \\
&\ge \sep_{(2,\infty),V_2}(\Lambda_1,V_2\Lambda_2V_2^T)\normtwoinf{\hY} - \normtwoinf{\hY}\|E\|_2 \\
&\ge \frac34\gap\normtwoinf{\hY},
\end{aligned}
\]
and, therefore,
\[
\left(\frac{3}{4}\gap - \frac{2\|E_{2,1}\|_2^2}{\gap}\right)\normtwoinf{\hY}  \le \normtwoinf{V_2E_{2,1}} + \normtwoinf{V_2V_2^TE\hY}.
\]
Since $2\|E_{2,1}\|_2/\gap \le 1$ and $\|E_{2,1}\|_2\le \gap/4$ by assumption (using $\|E_{2,1}\|_2\leq\|E\|_2$) we have that
\begin{equation}
\label{eqn:thm_split}
\normtwoinf{\hY} \le \frac{2\normtwoinf{V_2E_{2,1}}}{\gap} + \frac{2\normtwoinf{V_2V_2^TE\hY}}{\gap}.
\end{equation}

Prior to concluding the proof, we summarize our results up to this point in Lemma~\ref{lem:Y}. We partly pause to highlight a natural launching point for problem specific analysis, particularly in settings where it is possible to control $\normtwoinf{V_2V_2^TE\hY}$ in a tighter manner than suggested by our worst-case bounds that follow.    
\begin{lemma}
\label{lem:Y}
Let $A\in\reals^{n\times n}$ be symmetric with an eigen-decomposition $A = V_1\Lambda_1V_1^T +V_2\Lambda_2V_2^T$ following the conventions of~\eqref{eqn:eigen-decomp} and
\[
\gap = \min\{\sep_2(\Lambda_1,\Lambda_2),\sep_{(2,\infty),V_2}(\Lambda_1,V_2\Lambda_2V_2^T)\}.
\]
If $\|E\|_2 \le \frac{\gap}{5}$ then
\begin{equation*}
\normtwoinf{\hV_1\tU - V_1} \le 8\normtwoinf{V_1}\left(\frac{\|E_{2,1}\|_2}{\sep_2(\Lambda_1,\Lambda_2)}\right)^2 + 2\frac{\normtwoinf{V_2E_{2,1}}+\normtwoinf{V_2E_{2,2}V_2^T\hY}}{\gap},
\end{equation*}
where $\hV_1$ is any matrix with orthonormal columns whose range is the dominant $r$-dimensional invariant subspace of $\hA,$ $\tU$ solves the orthogonal Procrustes problem
\begin{equation*}
\min\{\|\hV_1U - V_1\|_F:U\in\orth^r\},
\end{equation*}
and $\hY = V_2\hX$ where $\hX$ is the root of
\[
F:X\to -\hA_{2,1} + X\hA_{1,1} - \hA_{2,2}X + X\hA_{1,2}X
\]
found by the iteration~\eqref{eq:NewtonIterationX} starting at $X_0=0$ and thereby satisfying Lemma~\ref{lem:NewtonKantorovichX}.

\end{lemma}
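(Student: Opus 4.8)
The plan is to consolidate the two stages of the proof of Theorem~\ref{thm:main} developed above into the single inequality claimed here; no genuinely new ingredient is needed, only a reorganization. The first observation is that $\normtwoinf{\hV_1\tU-V_1}$ does not depend on which orthonormal basis $\hV_1$ of the dominant $r$-dimensional invariant subspace of $\hA$ we pick: replacing $\hV_1$ by $\hV_1 Q$ with $Q\in\orth^r$ sends the Procrustes optimum to $Q^T\tU$, and $(\hV_1 Q)(Q^T\tU)=\hV_1\tU$. Hence it suffices to exhibit one convenient basis and bound $\normtwoinf{\hV_1-V_1}$ for it. I would take $\hV_1=(V_1+V_2\hX)(I_r+\hX^T\hX)^{-\frac12}$ as in~\eqref{eq:hV1}, with $\hX$ the root of the quadratic map $F$ produced by the Newton iteration~\eqref{eq:NewtonIterationX} started at $X_0=0$. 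Because $\|E\|_2\le\gap/5\le\sep_2(\Lambda_1,\Lambda_2)/4$, Lemma~\ref{lem:NewtonKantorovichX} yields convergence, the fact that $\hX$ satisfies~\eqref{eq:hX} (so $\ran\hV_1$ is invariant), and the bound $\|\hX\|_2\le 4\|E_{2,1}\|_2/\sep_2(\Lambda_1,\Lambda_2)$. The block-diagonalization~\eqref{eqn:AhatDiag}, Bauer--Fike, and $\|E\|_2\le\gap/5$ then identify $\ran\hV_1$ as the \emph{dominant} subspace, and the computation $\hV_1^TV_1=(I_r+\hX^T\hX)^{-\frac12}\succ 0$ forces the Procrustes optimum for this $\hV_1$ to be $I$, matching the statement.

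With $\hV_1$ fixed I would split $\hV_1-V_1$ via~\eqref{eq:error} into a piece in $\ran V_1$ plus a piece in $\ran V_2$. The $\ran V_1$ piece has $(2,\infty)$-norm at most $\normtwoinf{V_1}\,\|(I_r+\hX^T\hX)^{-\frac12}-I_r\|_2$; the elementary bound $\lvert 1-(1+x)^{-1/2}\rvert\le x/2$ with $x=\|\hX\|_2^2$ together with Lemma~\ref{lem:NewtonKantorovichX} makes this at most $8\normtwoinf{V_1}(\|E_{2,1}\|_2/\sep_2(\Lambda_1,\Lambda_2))^2$, the first term of the claim. Since $(I_r+\hX^T\hX)^{-\frac12}$ has spectral norm at most one, the $\ran V_2$ piece has $(2,\infty)$-norm at most $\normtwoinf{V_2\hX}=\normtwoinf{\hY}$ with $\hY:=V_2\hX$. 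After the triangle inequality (cf.~\eqref{eq:halfMain}), the claim reduces to proving $\normtwoinf{\hY}\le 2\bigl(\normtwoinf{V_2E_{2,1}}+\normtwoinf{V_2E_{2,2}V_2^T\hY}\bigr)/\gap$.

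This bound on $\normtwoinf{\hY}$ is the heart of the argument, and I expect it to be the main obstacle, since the naive estimate $\normtwoinf{V_2\hX}\le\normtwoinf{V_2}\|\hX\|_2$ is useless: $\normtwoinf{V_2}$ can be of order one even when $\normtwoinf{V_1}$ is tiny. Instead I would exploit that $\hY=V_2\hX$ inherits a Sylvester-type identity from~\eqref{eq:hX}: using $A_{1,2}=A_{2,1}=0$ and rearranging gives
\[
\hY\hA_{1,1}-V_2\Lambda_2V_2^T\hY=-V_2E_{2,1}+\hY E_{1,2}V_2^T\hY+V_2E_{2,2}V_2^T\hY .
\]
Taking $\normtwoinf{\cdot}$, the right side is controlled via the submultiplicative relations for $\normtwoinf{\cdot}$, the identity $\|E_{1,2}\|_2=\|E_{2,1}\|_2$, and $\|\hY\|_2=\|\hX\|_2\le 4\|E_{2,1}\|_2/\sep_2(\Lambda_1,\Lambda_2)$, so the quadratic term $\hY E_{1,2}V_2^T\hY$ contributes at most $\normtwoinf{\hY}$ times $2\|E_{2,1}\|_2^2/\gap$. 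The left side is bounded below, using $\hY\in\ran V_2$, $\hA_{1,1}=\Lambda_1+E_{1,1}$, $\|E_{1,1}\|_2\le\|E\|_2\le\gap/5$, and $\sep_{(2,\infty),V_2}(\Lambda_1,V_2\Lambda_2V_2^T)\ge\gap$, by $\frac34\gap\,\normtwoinf{\hY}$. Moving the quadratic term to the left and using $\|E_{2,1}\|_2\le\gap/5$ (so $2\|E_{2,1}\|_2/\gap\le 1$, hence $2\|E_{2,1}\|_2^2/\gap\le\|E_{2,1}\|_2\le\gap/4$) leaves a coefficient of $\normtwoinf{\hY}$ at least $\frac12\gap$ on the left, which is precisely where the slack in $\|E\|_2\le\gap/5$ is spent. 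Solving for $\normtwoinf{\hY}$ gives the reduced estimate; noting $V_2E_{2,2}V_2^T\hY=V_2V_2^TE\hY$ to match the notation carried forward, and adding the two terms, completes the proof.
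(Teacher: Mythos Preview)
Your proposal is correct and follows essentially the same route as the paper: the same parametrization $\hV_1=(V_1+V_2\hX)(I_r+\hX^T\hX)^{-1/2}$, the same use of Lemma~\ref{lem:NewtonKantorovichX} and Bauer--Fike to identify the dominant subspace, the same Procrustes computation, the split~\eqref{eq:error}, and the same Sylvester-type rearrangement for $\hY$ leading to~\eqref{eqn:thm_split}. Your handling of the constants (the $\frac34\gap$ lower bound and the $\frac12\gap$ remaining after absorbing the quadratic term) matches the paper's.
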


Moving forward, Lemma~\ref{lem:Y} immediately implies that
\[
\normtwoinf{\hY} \le 2\frac{\normtwoinf{V_2E_{2,1}}}{\gap} + 4\frac{\normtwoinf{V_2E_{2,2}V_2^T}\|E_{2,1}\|_2}{\gap\times \sep_2(\Lambda_1,\Lambda_2)},
\]
where we have used the sub-multiplicative relationships
\[
\normtwoinf{V_2E_{2,2}V_2^T\hY} \leq \normtwoinf{V_2E_{2,2}V_2^T}\|\hY\|_2
\] 
in conjunction with Lemma~\ref{lem:NewtonKantorovichX} to bound $\|\hY\|_2$. This concludes the proof of~\eqref{eq:mainP} in Theorem~\ref{thm:main} and Corollary~\ref{cor:main} follows immediately. 

We now briefly retrace our steps to prove Corollary~\ref{cor:infBound}. In particular, returning to Lemma~\ref{lem:Y} we can instead conclude that
\[
\normtwoinf{\hY}\left(1 -\frac{2(1+\mu^2)}{\gap}\frac{\normtwoinf{E\hY}}{\normtwoinf{\hY}}\right) \le \frac{2\normtwoinf{V_2E_{2,1}}}{\gap},
\]
where we have used the observation that 
\[
\begin{aligned}
\|V_2V_2^T\|_\infty &= \|I_n - V_1V_1^T\|_\infty \\
&\le 1 + \max\{{\textstyle\sum_{j=1}^n|v_i^Tv_j|}:i\in[n]\} \\
&\le 1 + n\normtwoinf{V_1}^2\\
&\le 1 + \mu^2,
\end{aligned}
\]
and that $\hY\in\ran{V_2}.$
Now, since $\normtwoinf{E\hY}/\normtwoinf{\hY} \leq \|E\|_{\infty}$ if we further assume that $\|E\|_{\infty}\leq \gap / (4+ 4\mu^2)$ we get that
\[
\left(1 -\frac{2(1+\mu^2)}{\gap}\frac{\normtwoinf{E\hY}}{\normtwoinf{\hY}}\right) \ge \frac{1}{2}.
\]
Therefore, 
\[
\normtwoinf{\hY} \le 4\frac{\normtwoinf{V_2E_{2,1}}}{\gap},
\]
which concludes the proof of Corollary~\ref{cor:infBound}.

\subsection{Observations and implications}
\label{sec:observations}
We now discuss several aspects of our bounds in greater detail. In particular, we first construct specific examples that show any of the 3 terms in the bound of Theorem~\ref{thm:main} may tightly control the error and therefore are all necessary. We then argue why $\sep_{(2,\infty),V_2}(\Lambda_1,V_2\Lambda_2V_2^T)$ should be directly included in our bounds by showing that in the worst case it may be considerably smaller than $\sep_F(\Lambda_1,\Lambda_2).$ Lastly, we discuss the use of our bound in certain probabilistic scenarios motivated by applications and highlight how our bounds can facilitate further analysis of those situations.

\subsubsection{When the upper bound is tight}
The first term of our upper bound represents the projection of the error onto $V_1$ while the latter two terms arise from the projection onto $V_2.$ Therefore, we focus on the latter piece to understand if both the terms are necessary and examine our potentially loose use of the triangle inequality and sub-multiplicative bounds in the proof. To accomplish this, we construct a specific example and examine the behavior of our bound. 

We bounded the projection of the error onto $\ran V_2$ as 
\begin{equation}
\label{eqn:tight}
\normtwoinf{V_2V_2^T(\hV_1-V_1)}\leq\frac{2\normtwoinf{V_2E_{2,1}}}{\gap} + 4\frac{\normtwoinf{V_2E_{2,2}V_2^T}\|E_{2,1}\|_2}{\gap\times \sep_2(\Lambda_1,\Lambda_2)}.
\end{equation}
While the first term on the right hand side of~\eqref{eqn:tight} is a natural part of our bound given the quadratic form~\eqref{eq:hY}, the second term arose from the sub-multiplicative bound
\[
\normtwoinf{V_2E_{2,2}V_2^T\hY} \leq \normtwoinf{V_2E_{2,2}V_2^T}\|\hY\|_2 \leq \frac{2}{\sep_2(\Lambda_1,\Lambda_2)}\normtwoinf{V_2E_{2,2}V_2^T}\|E_{2,1}\|_2.
\] 
Nevertheless, both terms are necessary\textemdash there are perturbations that saturate each part of the bound.

To show this, we build an example that demonstrates two clear regimes\textemdash one where the first term of~\eqref{eqn:tight} controls the error tightly and one where the second term does. We accomplish this by picking $E$ such that for the resulting $\hY$ 
\[
\normtwoinf{E\hY} \approx \normtwoinf{V_2E_{2,2}V_2^T}\frac{\|E\|_2}{\gap}.
\]
To keep thing simple, we consider the $r=1$ case with $\lambda_1 = 1$ and $\lambda_2,\ldots,\lambda_n = 0,$ which implies $\gap = 1.$ We then let $V_1 = \mathbf{1}/\sqrt{n}$ and observe that if $E_{1,2} = 0$ and $E_{1,1} = 0$ then $\hY$ satisfies
\[
(I-V_2E_{2,2}V_2^T)\hY = V_2E_{2,1}.
\]
The core insight in our construction is that we can now choose $V_2E_{2,2}V_2^T$ and $V_2E_{2,1}$ carefully to accomplish our goal. This is because we can essentially determine $\hY$ (in fact, to first order it looks like $V_2E_{2,1}$ if the norm of $E$ is sufficiently small).

Now, let 
\[
V_2E_{2,2}V_2^T = V_2V_2^T(e_1\mathbf{1}_{\pm}^T/\sqrt{n} + \mathbf{1}_{\pm}e_1^T/\sqrt{n})V_2V_2^T.
\]
In this case there exists a $y$ with $y_1=1$ and $y_2,\ldots,y_n = \mathcal{O}(1/\sqrt{n})$ such that $(I-V_2E_{2,2}V_2^T)y = \mathcal{O}(1/\sqrt{n})$ entry-wise. Setting $V_2E_{2,1}$ to be proportional to $V_2V_2^T(I-V_2E_{2,2}V_2^T)y$ lets us deterministically construct an $E$ where $\hY$ essentially saturates the sub-multiplicative bound.\footnote{Practically one can make $E$ symmetric by setting $E_{1,2}$ appropriately without destroying the example and scale $E$ by $n^{-1/3}$ so that we expect convergence in $n$. Details are available in the online materials referred to in the numerical experiments section. Choices of scaling constants in individual parts of $E$ control where the crossover point occurs between the two bounds.} The preceding construction yields a purely deterministic example illustrating in Figure~\ref{fig:twoboundsV2} that either part of~\eqref{eqn:tight} can be dominant. Similarly, Figure~\ref{fig:twoboundsV1} shows, as expected, that our bound on the projection of the error onto $V_1$ tightly captures the asymptotic behavior.

\begin{figure}[ht]
    \begin{subfigure}[t]{0.49\columnwidth}
	  \includegraphics[width=1\columnwidth]{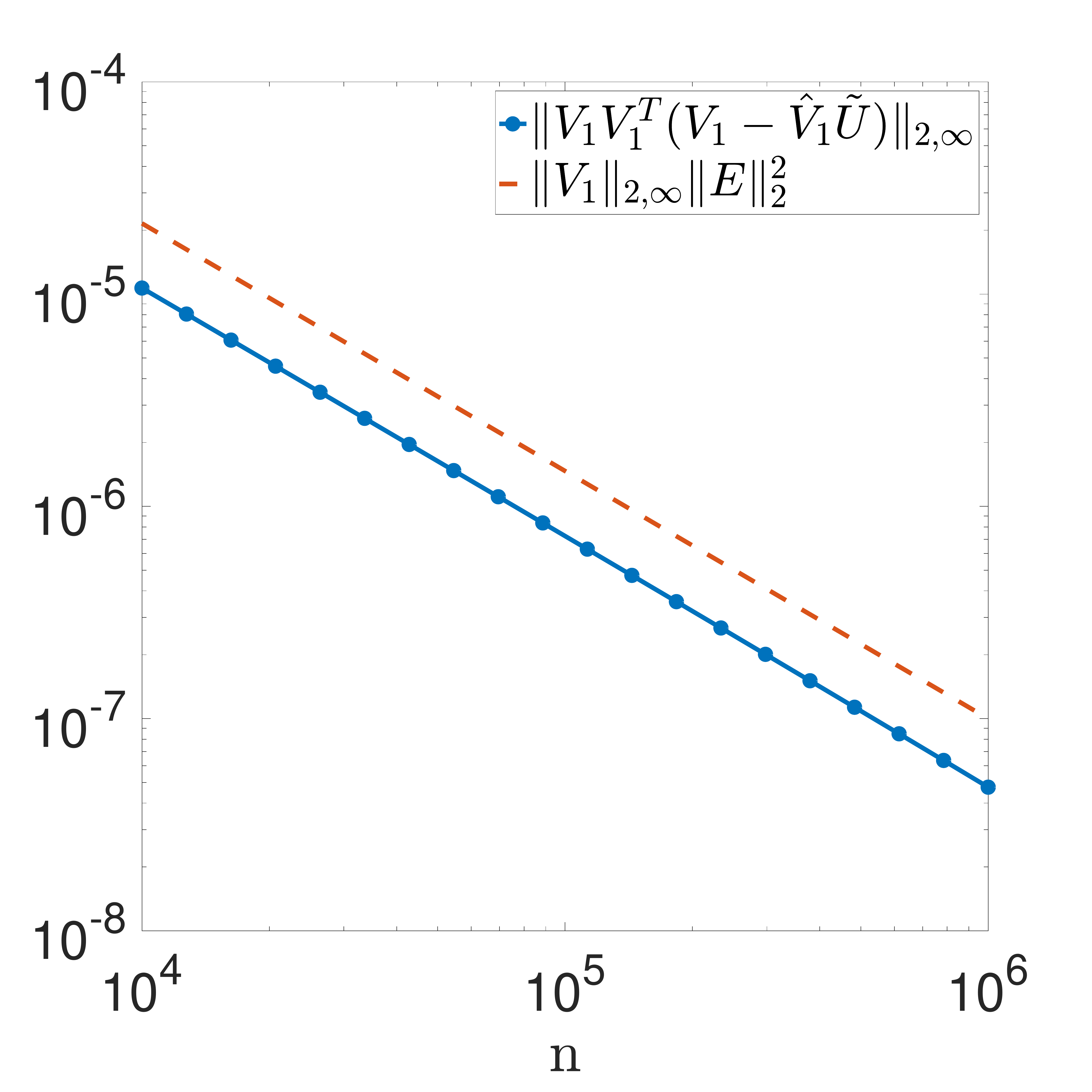}
	  \caption{Projection onto $V_1$}
	  \label{fig:twoboundsV1}
  \end{subfigure}
  \begin{subfigure}[t]{0.49\columnwidth}
	  \includegraphics[width=1\columnwidth]{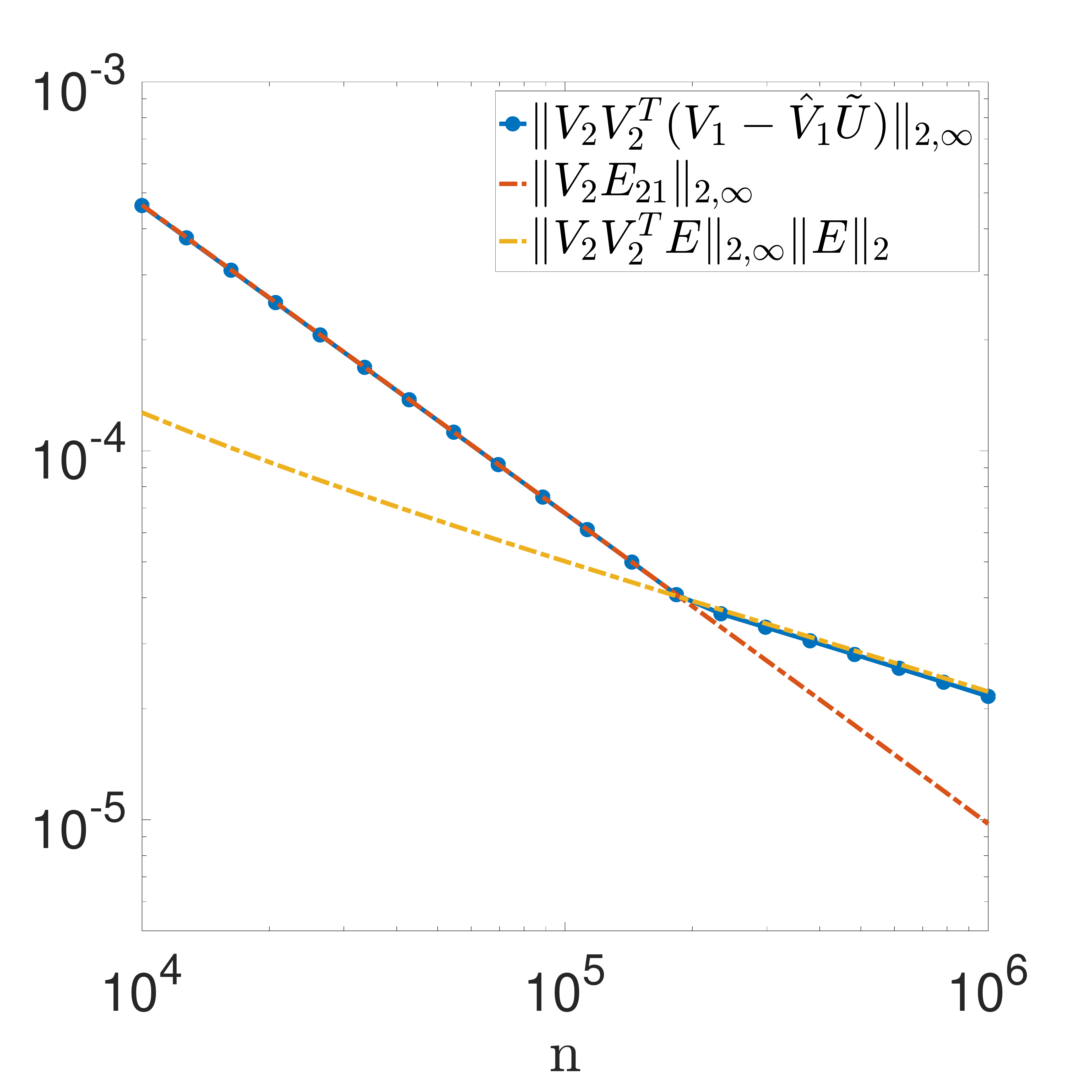}
	  \caption{Projection onto $V_2$}
	  \label{fig:twoboundsV2}
  \end{subfigure}
  \caption{Asymptotic behavior of $\min_{\tU=\pm 1}\normtwoinf{\hV_1 - V_1\tU}$ split into the component of the error in $\ran V_1$ and $\ran V_2.$ This example shows that either part of the upper bound in Theorem~\ref{thm:main} associated with the error projected onto $\ran V_2$ can tightly control the rate of decay. Similarly, our control over the projection of the error onto $\ran V_1$ matches the observed rate for this example. Note that, as indicated by the legend, we have not included constants (they would appear to be slightly loose in this case) and used the upper bound $\normtwoinf{V_2E_{2,2}V_2^T}\leq\normtwoinf{V_2V_2^TE}.$ Therefore the dotted lines technically represent our upper bounds to within small constants.}
  \label{fig:twobounds}
\end{figure}

\subsubsection{Inclusion of norm specific separation}
In the proof of Theorem~\ref{thm:main} $\sep_{(2,\infty),V_2}(\Lambda_1,V_2\Lambda_2V_2^T)$ arises somewhat naturally. Nevertheless, ideally one would be able to generically relate it tightly to traditional notions of an eigengap. Unfortunately, the lower bound provided in Lemma~\ref{lem:sepU_lowerbound} is essentially tight. To show this we explicitly construct an example that achieves (to within a small constant) the lower bound $\sep_{(2,\infty),V_2}(\Lambda_1,V_2\Lambda_2V_2^T) \geq \sep_F(\Lambda_1,\Lambda_2)/\sqrt{n}$.

Assume $n$ is even and let $\mathbf{1}$ be the vector of all ones and $(\mathbf{1}_{\pm})_i = -1$ if $i > n/2$ and $1$ otherwise. Now, define $v_1 = \begin{bmatrix} 0 & \mathbf{1}^T\end{bmatrix}^T/\sqrt{n}$ and $v_2 = \begin{bmatrix} 0 & \mathbf{1}_{\pm}^T\end{bmatrix}^T/\sqrt{n}$ and consider the $(n+1)\times (n+1)$ matrix  
\[
A = 2*v_1v_1^T + \begin{bmatrix}e_1 & v_2\end{bmatrix}\begin{bmatrix}0 & 1\\ 1 & 0\end{bmatrix}\begin{bmatrix}e_1 & v_2\end{bmatrix}^T
\]
In our framework this corresponds to setting $\Lambda_1 = 2,$ $\Lambda_2 = \diag(1,-1,0,\ldots,0)\in\reals^{n\times n},$ and letting $V_2$ to be any matrix with orthonormal columns spanning the orthogonal complement of $v_1$ such that 
\[
\begin{bmatrix}e_1 & v_2\end{bmatrix}\begin{bmatrix}0 & 1\\ 1 & 0\end{bmatrix}\begin{bmatrix}e_1 & v_2\end{bmatrix}^T = V_2\Lambda_2V_2^T.
\]

In this case, by picking the vector $q = e_1 +2v_2$ (which is in the range of $V_2$ and satisfies $\normtwoinf{q} = 1$ if $n\geq 4$) we see that 
\begin{align*}
\normtwoinf{q\Lambda_1 - V_2\Lambda_2 V_2^Tq} &= \normtwoinf{2q - 2e_1 - v_2}\\
&= \normtwoinf{3v_2} \\
& = \frac{3}{\sqrt{n}}.
\end{align*}
Therefore, $\sep_{(2,\infty),V_2}(\Lambda_1,V_2\Lambda_2V_2^T) \leq 3/\sqrt{n}$ and since $\sep_F(\Lambda_1,\Lambda_2)=1$ this shows that Lemma~\ref{lem:sepU_lowerbound} is essentially tight. Nevertheless, Lemmas~\ref{lem:sepU_lowerbound} and~\ref{lem:sepU_normlowerbound} also show situations where $\sep_{(2,\infty),V_2}$ and $\sep_F$ are more closely related. Ultimately, the range of possible relationships between $\sep_{(2,\infty),V_2}$ and $\sep_F$ motivates the inclusion of $\sep_{(2,\infty),V_2}$ directly in any worst-case deterministic bound.

\subsubsection{Probabilistic settings}
\label{subsec:probSetting}
While the two preceding sections illuminate why various terms in our constructed bounds are necessary, one may expect that in random settings these terms can be controlled more effectively and the expected behavior may be far from the worst case. In particular, if we consider $E = \sigma Z$ where $Z$ is symmetric and $Z_{i,j}$ are \iid\ $ \mathcal{N}(0,1)$ (up to the symmetry constraint) random variables we have by standard properties of Gaussian random matrices (see, \emph{e.g.,} \cite[\S~4.4]{vershynin2018HighDimensional})
\[
\Pr(\|E\|_2 > 3\sigma\sqrt{n}) \lesssim e^{-\frac{n}{2}}.
\]
Furthermore, under an assumption that $V_1$ is incoherent (recall that $\mu = \sqrt{n}\normtwoinf{V_1}$) and $E$ is independent from $V_1$ we may assert (again via standard properties of Gaussian random matrices and a union bound~\cite[\S~4.4]{vershynin2018HighDimensional}) that 
\[
\normtwoinf{E_{2,1}} \le (1+\mu^2)\normtwoinf{EV_1} \lesssim \sigma\sqrt{\log n}
\]
with high probability. Ideally, these results would directly imply that in such a setting 
\begin{equation}
\label{eqn:conjecture}
\min\{\normtwoinf{\hV_1U - V_1}:U\in\orth^r\} \lesssim \sigma\sqrt{\log n}
\end{equation}
with high probability. Unfortunately, this does not follow directly from Theorem~\ref{thm:main} as for certain values of $\sigma$ the error bound is dominated by the term $\normtwoinf{E}\|E\|_2 \lesssim \sigma^2 n.$ 

Figure \ref{fig:twobounds} shows that this is not an artifact of our analysis; it is possible to construct examples that saturate the error bound. However, these examples are adversarial. In particular, the independence among the entries of $Z$ permits more direct control of $\normtwoinf{E\hY}$ and \cite{abbe2017Entrywise} appeal to a leave-one-out technique to achieve such direct control. Based on our analysis, we conjecture that their column-wise independence condition may be relaxed to independence among the $E_{i,j}$'s ($i,j\in\{1,2\}$). While we defer a thorough analysis of this problem in the probabilistic setting for future work, Theorem~\ref{thm:probG} illustrates how additional mild assumptions on $E$ can be used to improve our bounds. While we have stated Theorem~\ref{thm:probG} for the Procrustes solution, the result for the minimum over $U\in\orth^r$ (analogous to Corollary~\ref{cor:main}) follows immediately.
\begin{theorem}
\label{thm:probG}
Let $A\in\reals^{n\times n}$ be symmetric with eigen-decomposition 
\[
A = V_1\Lambda_1V_1^T +V_2\Lambda_2V_2^T
\] 
following the conventions of~\eqref{eqn:eigen-decomp}, 
\[
\gap = \min\{\sep_2(\Lambda_1,\Lambda_2),\sep_{(2,\infty),V_2}(\Lambda_1,V_2\Lambda_2V_2^T)\}
\] 
be independent of $n,$ and $\normtwoinf{V_1}\lesssim 1/\sqrt{n}.$ If $E = \sigma Z$ where $Z$ is symmetric, $Z_{i,j}$ are \iid\ $\mathcal{N}(0,1)$ random variables (up to the symmetry constraint) and $\sigma \lesssim 1/\sqrt{n}$ then with probability $1-o(1)$
\begin{align*}
\min\{\normtwoinf{\hV_1\tU - V_1}:U\in\orth^r\} &\lesssim \sigma^2 \sqrt{n} + \sigma \sqrt{\log n} +(\sigma \sqrt{n})^3,
\end{align*}
where $\hV_1$ is any matrix with orthonormal columns whose range is the dominant $r$-dimensional invariant subspace of $\hA,$ and $\tU$ solves the orthogonal Procrustes problem
\begin{equation*}
\min\{\|\hV_1U - V_1\|_F:U\in\orth^r\}.
\end{equation*}
\end{theorem}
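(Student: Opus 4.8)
The plan is to specialize Theorem~\ref{thm:main} (or more precisely the intermediate Lemma~\ref{lem:Y}, which does not prematurely apply the sub-multiplicative bound on $\normtwoinf{V_2E_{2,2}V_2^T\hY}$) to the Gaussian model $E=\sigma Z$ and then bound each surviving term using standard concentration for Gaussian matrices. First I would verify the deterministic hypothesis: since $\sigma\lesssim 1/\sqrt n$, with probability $1-o(1)$ we have $\|E\|_2\le 3\sigma\sqrt n\lesssim 1$, so for $n$ large enough $\|E\|_2\le \gap/5$ (using that $\gap$ is a constant independent of $n$), and Theorem~\ref{thm:main} applies. The first term of the bound is $8\normtwoinf{V_1}(\|E_{2,1}\|_2/\sep_2(\Lambda_1,\Lambda_2))^2$; using $\normtwoinf{V_1}\lesssim 1/\sqrt n$, $\|E_{2,1}\|_2\le\|E\|_2\lesssim\sigma\sqrt n$, and $\sep_2(\Lambda_1,\Lambda_2)\gtrsim 1$, this is $\lesssim \sigma^2\sqrt n$. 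The second term, $2\normtwoinf{V_2E_{2,1}}/\gap$, is handled by noting $\normtwoinf{V_2E_{2,1}}\le\|V_2V_2^T\|_\infty\,\normtwoinf{E_{2,1}}\le (1+\mu^2)\normtwoinf{EV_1}$ with $\mu=\sqrt n\normtwoinf{V_1}\lesssim 1$; since each row of $EV_1$ is a Gaussian vector in $\reals^r$ with covariance $\sigma^2 V_1^TV_1=\sigma^2 I_r$ (up to the symmetry constraint, which only helps), a union bound over the $n$ rows gives $\normtwoinf{EV_1}\lesssim\sigma\sqrt{\log n}$ with probability $1-o(1)$, so this term is $\lesssim\sigma\sqrt{\log n}$.

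The crux is the third term. Rather than using $\normtwoinf{V_2E_{2,2}V_2^T\hY}\le\normtwoinf{V_2E_{2,2}V_2^T}\|\hY\|_2$ (which would yield $\sigma^2 n$ and is too lossy, as Figure~\ref{fig:twobounds} shows), I would instead iterate the fixed-point relation~\eqref{eqn:thm_split},
\[
\normtwoinf{\hY}\le\frac{2\normtwoinf{V_2E_{2,1}}}{\gap}+\frac{2\normtwoinf{V_2V_2^TE\hY}}{\gap},
\]
one more step: substitute the Frobenius/spectral bound $\|\hY\|_2\le 4\|E_{2,1}\|_2/\sep_2(\Lambda_1,\Lambda_2)\lesssim\sigma\sqrt n$ only inside a \emph{second-order} term, and keep $\hY$ explicit to first order. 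Concretely, one writes $\hY = \hY^{(1)} + (\hY - \hY^{(1)})$ where $\hY^{(1)}$ solves the linearized equation $\hY^{(1)}\Lambda_1 - V_2\Lambda_2V_2^T\hY^{(1)} = -V_2E_{2,1}$, so that $\normtwoinf{\hY^{(1)}}\le\normtwoinf{V_2E_{2,1}}/\sep_{(2,\infty),V_2}(\Lambda_1,V_2\Lambda_2V_2^T)\lesssim\sigma\sqrt{\log n}$ and $\|\hY-\hY^{(1)}\|_2\lesssim\|E\|_2\|\hY\|_2\lesssim\sigma^2 n$. Then $\normtwoinf{V_2V_2^TE\hY}\le\normtwoinf{V_2V_2^TE\hY^{(1)}}+\|V_2V_2^T\|_\infty\|E\|_\infty\|\hY-\hY^{(1)}\|_2$. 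For a Gaussian matrix $\|E\|_\infty\lesssim\sigma\sqrt n\log n$ (max of $n$ sums of $n$ i.i.d. Gaussians, row-wise), giving a contribution $\lesssim\sigma^3 n^{3/2}\log n = \tilde{\mathcal O}((\sigma\sqrt n)^3)$ which matches the stated $(\sigma\sqrt n)^3$ term up to logs. The remaining piece $\normtwoinf{V_2V_2^TE\hY^{(1)}}$ must be shown to be $\lesssim\sigma\sqrt{\log n}$: since $\hY^{(1)}$ is a fixed linear image of $E_{2,1}=V_2^TEV_1$, this is a quadratic form in the Gaussian entries of $E$ and can be controlled by a Hanson--Wright-type bound together with the incoherence $\|V_2V_2^T\|_\infty\lesssim 1$, yielding the $\sigma\sqrt{\log n}$ scale after a union bound over rows.

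The main obstacle is precisely this last estimate: decoupling the dependence of $\hY^{(1)}$ on $E$ so that $\normtwoinf{V_2V_2^TE\hY^{(1)}}$ does not pick up an extra factor of $\sqrt n$. The honest route is a leave-one-out argument in the spirit of~\cite{abbe2017Entrywise}: fix a row index $i$, replace $E$ by a copy $E^{(i)}$ with its $i$-th row and column zeroed out, control $\|\hY - \hY^{(i)}\|$ (the perturbation to the fixed point from removing one row/column, which is $O(\sigma)$ in the relevant norm), and then bound $e_i^TV_2V_2^TE^{(i)}\hY^{(i)}$ using the independence of row $i$ of $E$ from $\hY^{(i)}$, so that conditionally it is a Gaussian vector whose norm concentrates at scale $\sigma\|\hY^{(i)}\|_2 \lesssim \sigma\cdot\sigma\sqrt n$ — wait, that reintroduces the bad factor, so one must instead exploit that $\hY^{(i)}$ lies (to leading order) in a direction aligned with $V_1$'s column span where its \emph{typical} row is $O(\sigma/\sqrt n)$, i.e. use $\normtwoinf{\hY^{(i)}}\lesssim\sigma\sqrt{\log n}$ rather than $\|\hY^{(i)}\|_2$. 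Getting this bookkeeping right — separating the spectral-norm size of $\hY$ from its row-wise size at every stage and making sure the $(\sigma\sqrt n)^3$ term absorbs all the genuinely second-order contributions — is the delicate part; everything else is a routine application of Gaussian concentration and union bounds.
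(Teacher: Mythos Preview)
Your overall scaffolding is right: invoke Lemma~\ref{lem:Y}, dispatch the first two terms with standard Gaussian concentration, and focus on $\normtwoinf{V_2E_{2,2}V_2^T\hY}$. But the actual handling of that term is where your proposal stalls, and it differs from the paper's argument.

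The paper does \emph{not} linearize $\hY$ or use Hanson--Wright or leave-one-out. Instead it introduces a second perturbation $\tE$ drawn from the same Gaussian law as $E$, conditioned so that $\tE_{1,1}=E_{1,1}$ and $\tE_{1,2}=E_{1,2}$, but with $\tE_{2,2}$ an independent copy of $E_{2,2}$. Let $\tY$ be the corresponding fixed point for $A+\tE$. Because, for a symmetric Gaussian matrix, the blocks $V_i^TEV_j$ are jointly Gaussian and uncorrelated whenever $\{i,j\}\neq\{i',j'\}$ (by $V_1^TV_2=0$), the matrix $E_{2,2}$ is independent of $(E_{1,1},E_{2,1},\tE_{2,2})$ and hence of $\tY$. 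Conditioning on $\tY$, each row of $V_2E_{2,2}V_2^T\tY$ is therefore a Gaussian vector of scale $\sigma\|\tY\|_2\lesssim\sigma^2\sqrt{n}$, and a union bound gives $\normtwoinf{V_2E_{2,2}V_2^T\tY}\lesssim\sigma^2\sqrt{n\log n}$ directly. The remainder $\|\hY-\tY\|_2$ is controlled deterministically by Lemma~\ref{lem:prob}, producing exactly the $(\sigma\sqrt n)^3$ term.

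Two specific issues in your sketch: first, you state that $\normtwoinf{V_2V_2^TE\hY^{(1)}}$ ``must be shown to be $\lesssim\sigma\sqrt{\log n}$'' and then call the scale $\sigma\cdot\sigma\sqrt n$ a ``bad factor.'' It is not: $\sigma^2\sqrt n$ already appears in the theorem (from the $\normtwoinf{V_1}\|E_{2,1}\|_2^2$ term), so that scale is exactly what is needed here, and your leave-one-out computation was in fact arriving at the right answer before you abandoned it. Second, and more importantly, your linearized $\hY^{(1)}$ depends only on $E_{2,1}$, which for GOE is \emph{independent} of $E_{2,2}$; so $\normtwoinf{V_2E_{2,2}V_2^T\hY^{(1)}}\lesssim\sigma^2\sqrt{n\log n}$ follows by conditioning and Gaussian concentration with no decoupling inequality, no Hanson--Wright, and no leave-one-out. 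Had you noticed this independence, your route would close immediately and yield the paper's bound; the paper's independent-copy trick is just a slightly different packaging of the same idea that additionally keeps $\tY$ an exact (rather than linearized) fixed point.
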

\begin{proof}
The key idea behind this proof is to consider a second perturbation $\tE$ drawn from the same distribution as $E$ conditioned on $\tE_{1,2} = E_{1,2}$ and $\tE_{1,1}=E_{1,1}$\emthin nevertheless, $E_{2,2}$ and $\tE_{2,2}$ are still independent. Given $E$ and $\tE$ appropriate control of $\|E\|_2$ and $\|\tE\|_2$~\cite[\S~4.4]{vershynin2018HighDimensional} with probability $1-o(1)$ allow us to invoke Lemma~\ref{lem:prob} to get that
\begin{align*}
\normtwoinf{\hV_1\tU - V_1} &\lesssim \frac{1}{\sqrt{n}}\|E_{2,1}\|_2^2 + \normtwoinf{V_2E_{2,1}}+\normtwoinf{V_2E_{2,2}V_2^T\tY}\\ &\phantom{\lesssim} + \|E_{2,2}\|_2\normtwoinf{V_2E_{2,2}V_2^T}\|E_{2,1}\|_2+\normtwoinf{V_2E_{2,2}V_2^T}\|E_{2,1}\|_2^3
\end{align*}
where $\tY\in\reals^{n\times r}$ is in the range of $V_2$ and satisfies
\[
0 = -V_2\tA_{2,1} + \tY\tA_{1,1} - V_2\tA_{2,2}V_2^T\tY + \tY\tA_{1,2}V_2^T\tY.
\]
This implies that $\tY$ is independent of $E_{2,2}$ and satisfies $\|\tY\|_2\lesssim \|E_{2,1}\|_2.$ The stated bound follows immediately from the following bounds on $E$ that all hold with probability $1-o(1)$~\cite[\S~4.4]{vershynin2018HighDimensional}
\begin{align*}
\|E_{2,1}\|_2 &\lesssim \sigma\sqrt{n}\\
\normtwoinf{V_2E_{2,1}} &\lesssim \sigma\sqrt{\log n}\\
\normtwoinf{V_2E_{2,2}V_2^T\tY} &\lesssim \sigma^2\sqrt{n \log n}\\
\|E_{2,2}\|_2 &\lesssim \sigma\sqrt{n}\\
\normtwoinf{V_2E_{2,2}V_2^T} &\lesssim \sigma\sqrt{n},\\
\end{align*}
where because $\normtwoinf{V_1}\lesssim 1/\sqrt{n}$ we have that $\|V_2V_2^T\|_{\infty} \lesssim 1.$
\end{proof}
\begin{remark}
If we consider $\mu$ to be constant in $n$ (synonymous with $\normtwoinf{V_1}\lesssim 1/\sqrt{n}$), the rate of convergence of $\min\{\normtwoinf{\hV_1U - V_1}:U\in\orth^r\}$ implied by Theorem~\ref{thm:probG} is faster than that given by direct application Theorem~\ref{thm:main}. In particular, we get that  $\min\{\normtwoinf{\hV_1U - V_1}:U\in\orth^r\}\rightarrow 0$ as $(\sigma \sqrt{n})^3$ vs $(\sigma \sqrt{n})^2.$ While an improvement, we believe more intricate probabilistic techniques will be necessary to prove an upper bound that matches our conjectured rate of $\sigma \sqrt{\log n}$ (see~\eqref{eqn:conjecture}) in this setting.
\end{remark}

\section{Numerical simulations}
\label{sec:numerics}
We now provide numerical simulations to illustrate the effectiveness of our bounds and elaborate on a key difference between them and prior work. We consider two settings, one where $A$ is low-rank and one where $A$ is not low-rank and $\normtwoinf{V_2\Lambda_2V_2^T}$ is constant with respect to $n.$  In all these experiments, and as before, we let $\mathbf{1}$ be the vector of all ones and $(\mathbf{1}_{\pm})_i = -1$ be $1$ in the first half of the entires and $-1$ in the second half. We also let $E=\sigma Z$ where $Z$ is a symmetric matrix whose entries are \iid\ $\mathcal{N}(0,1).$ Code to generate these plots (and Figure~\ref{fig:twobounds}) is available at \texttt{https://github.com/asdamle/rowwise-perturbation}.

\subsection{A is low-rank}
Assume $n$ is even, let 
\[
V_1 = \frac{1}{\sqrt{n}}\begin{bmatrix}\mathbf{1} & \mathbf{1}_{\pm}\end{bmatrix},
\]
and consider $A = V_1V_1^T.$ In this setting, $\gap =1$ and if $\sigma = 1/n$ Theorem~\ref{thm:main} shows that $\min\{\normtwoinf{\hV_1U - V_1}:U\in\orth^r\} \lesssim_P \frac{\sqrt{\log{n}}}{n},$ where the term controlling the rate with respect to $n$ is $\normtwoinf{V_2E_{2,1}}.$ Furthermore, Theorem~\ref{thm:main} shows that $\normtwoinf{\hV_1\tU - V_1} \lesssim_P \frac{\sqrt{\log{n}}}{n}.$ Therefore, we conduct an experiment where for increasing values of $n$ we construct $\hA$ for several instances of $E,$ compute $\hV_1$ and the solution to the orthogonal Procrustes problem, and measure $\normtwoinf{\hV_1\tU - V_1}.$ Figure~\ref{fig:scale_n12} clearly shows the expected behavior for both the traditional subspace distance and our bound on $\normtwoinf{\cdot}.$ We also explicitly include our upper bound from Theorem~\ref{thm:main}, showing that it tightly describes the behavior to within constants. Lastly, Figure~\ref{fig:scale_n12} shows a clear distinction between our upper bound and simply using Davis-Kahan to upper bound $\normtwoinf{\hV_1\tU - V_1}$ (viable since $\normtwoinf{\cdot}\leq\|\cdot\|_2$)\emthin it is not just a matter of constants, in this case the rate of decay with respect to $n$ is fundamentally different for the two bounds.

Perhaps more interestingly, we also consider the case where $E = (1/n^{3/4})Z.$ In this case, simply applying concentration bounds to Theorem~\ref{thm:main} predicts $\normtwoinf{\hV_1\tU - V_1} \lesssim_P \frac{\sqrt{\log n}}{\sqrt{n}}.$ However, as expected in this setting, the bound used to control $\normtwoinf{E\hY}$ is loose and Figure~\ref{fig:scale_n14} shows that the error acts as if $E$ and $\hY$ were independent (though they are decidedly not) yielding an observed convergence rate of $\frac{\sqrt{\log n}}{n^{3/4}}.$ Nevertheless, as before we explicitly compute and plot the upper bound from Theorem~\ref{thm:main} for comparison. Notably, in this setting the improved bound of Theorem~\ref{thm:probG} is sufficient to correctly capture the observed asymptotic behavior.

\begin{figure}[ht]
  \centering 
  \begin{subfigure}[t]{0.49\columnwidth}
	  \includegraphics[width=1\columnwidth]{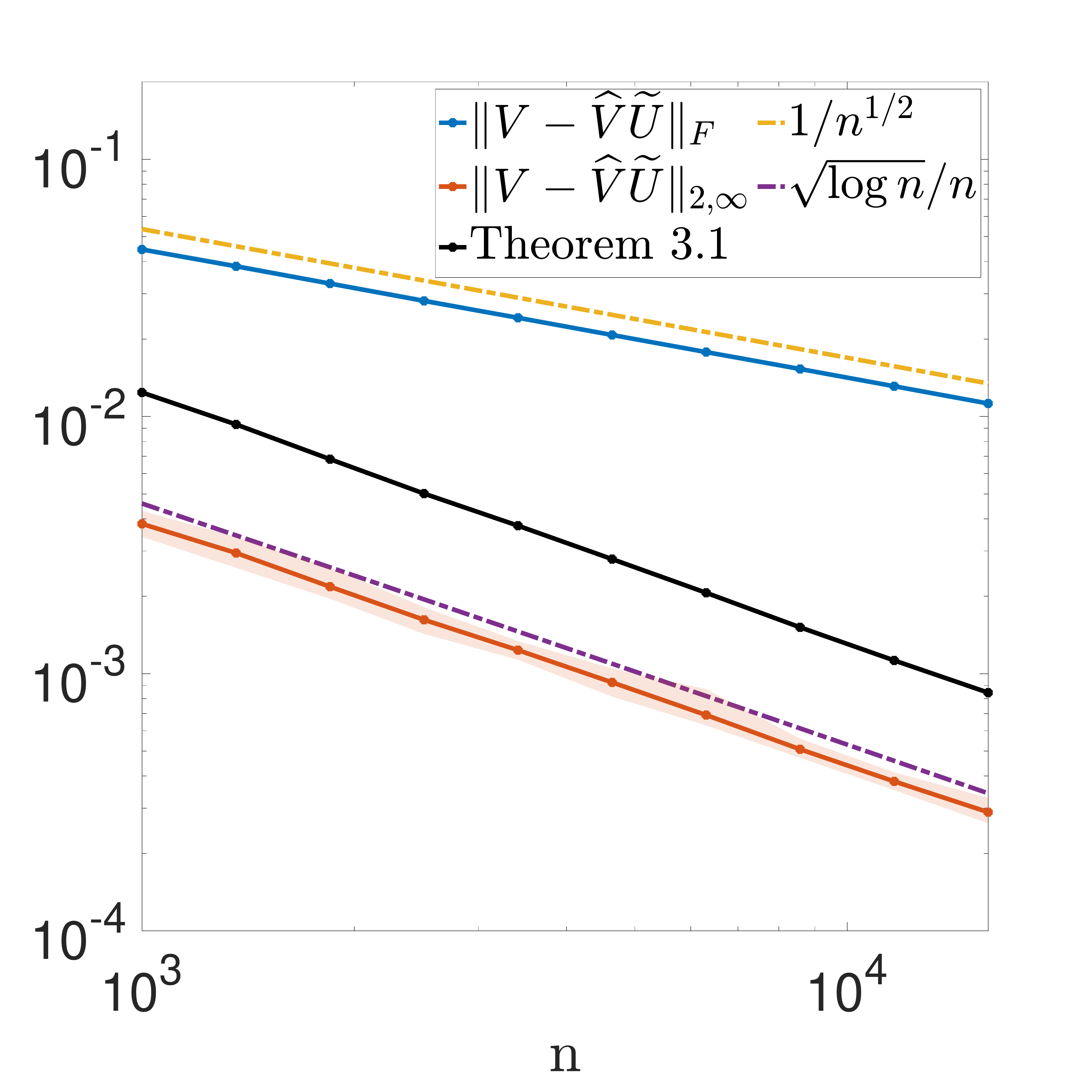}
	  \caption{$E = (1/n)Z$}
	  \label{fig:scale_n12}
  \end{subfigure}
  \begin{subfigure}[t]{0.49\columnwidth}
	  \includegraphics[width=1\columnwidth]{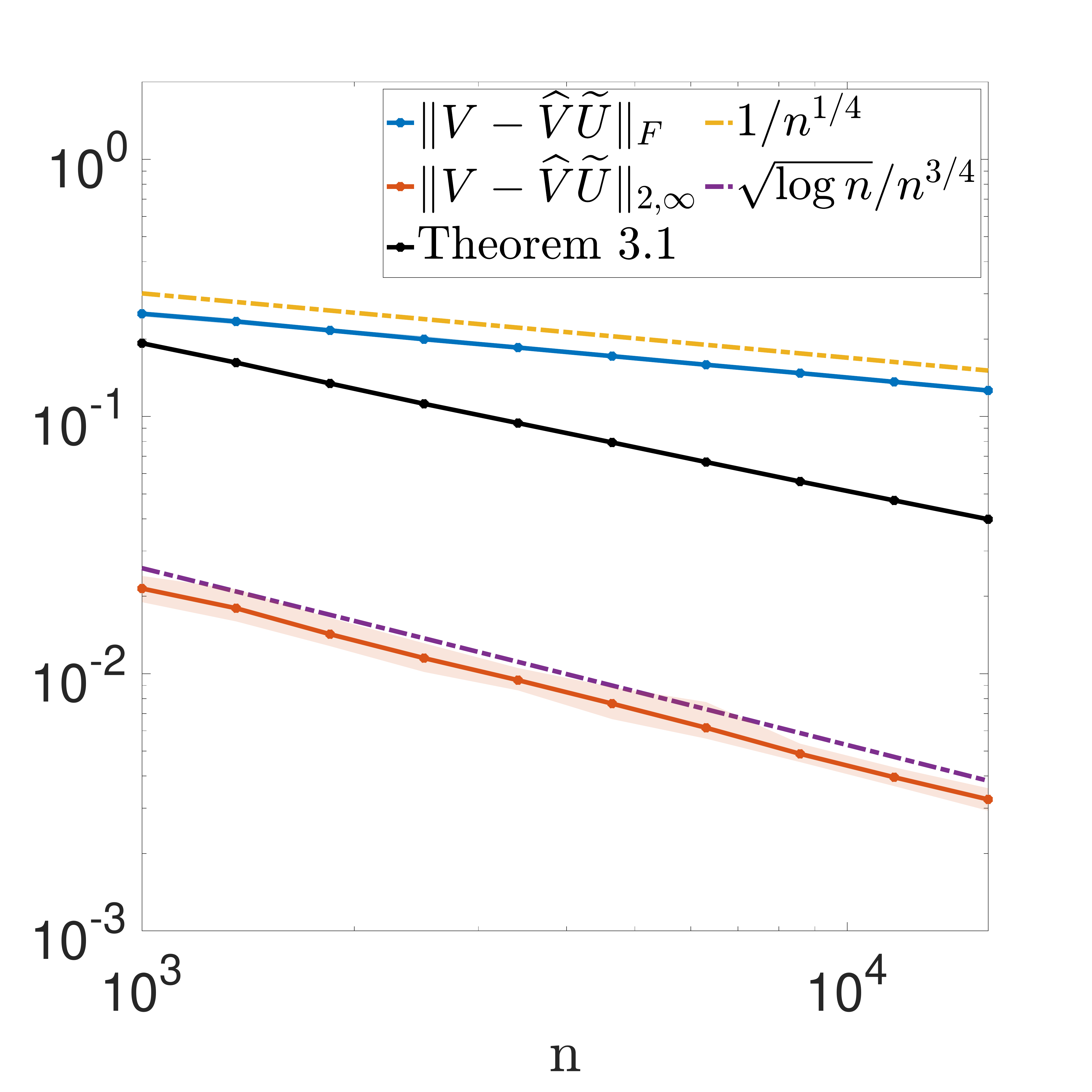}
	  \caption{$E = (1/n^{3/4})Z$}
	  \label{fig:scale_n14}
  \end{subfigure}
  \caption{Asymptotic behavior of $\normtwoinf{\hV_1\tU - V_1}$ where $A$ is low-rank and $E = \sigma Z$ for varying $\sigma$ (in this case $\mathbb{E}[\|E\|_2]\sim \sigma \sqrt{n}$), along with the upper bound given by Theorem~\ref{thm:main} for each instance of the problem. For each $n$ the experiment was repeated 30 times and we report the mean of the error and computed upper bounds. The shaded regions represent the area between the 0.05 and 0.95 quantiles of the respective lines. We see that the bounds provided by theorems~\ref{thm:main} and~\ref{thm:probG} are sharp (modulo a multiplicative constant). In the plot on the left, the slope of red line ($\normtwoinf{\hV_1\tU - V_1}$) matches the slope of the black line (bound provided by Theorem~\ref{thm:main}). In the plot on the right, the slope of the red line matches the slope of the purple line (rate provided by Theorem~\ref{thm:probG}). For reference we provide the more slowly converging quantity $\|\hV_1\tU - V_1\|_F,$ which is within constants of $\dist{(\hV_1,V_1)}$ and behaves as predicted by classical theory such as the Davis-Kahan Theorem~\cite{davis1970rotation}.}
\end{figure}

\subsection{A is not low-rank}
Next, we consider the case where $A$ itself is no longer low-rank. For even $n$ let 
\[
V_1 = \frac{1}{\sqrt{n}}\begin{bmatrix}\mathbf{1} & \mathbf{1}_{\pm}\end{bmatrix},
\]
$v_2 = e_1-e_2 \in\reals^{n\times 1},$ and
\[
A = 4V_1V_1^T + v_2v_2^T.
\]
Notably, $A$ is no longer low-rank and the component of $A$ orthogonal to $V_1$ is coherent, of significant relative magnitude, and does not decay with $n.$ Nevertheless, our results immediately imply that the asymptotic behavior of $\normtwoinf{\hV_1\tU - V_1}$ should match that of the low-rank case.\footnote{Here $\sep_{(2,\infty)}(\Lambda_1,V_2\Lambda_2V_2^T) \geq 2$ as a consequence of Lemma~\ref{lem:sepU_normlowerbound}} In contrast, this behavior is not accurately predicted by the upper bounds from~\cite{cape2017Twotoinfinity} reproduced in~\eqref{eqn:cape2017}. Using the same experimental set up as before, Figures~\ref{fig:coherent_n12} and~\ref{fig:coherent_n14} clearly illustrate the asymptotic behavior we expect\textemdash mirroring that of Figures~\ref{fig:scale_n12} and~\ref{fig:scale_n14} respectively. We also compute and plot our upper bound from Theorem~\ref{thm:main} for reference, observing that it is once again descriptive. Inclusion of the Davis-Kahan bound shows, once again, that there is a clear distinction between $\normtwoinf{\hV_1\tU - V_1}$ and $\|\hV_1\tU - V_1\|_F.$

\begin{figure}[ht]
  \centering 
  \begin{subfigure}[t]{0.49\columnwidth}
	  \includegraphics[width=1\columnwidth]{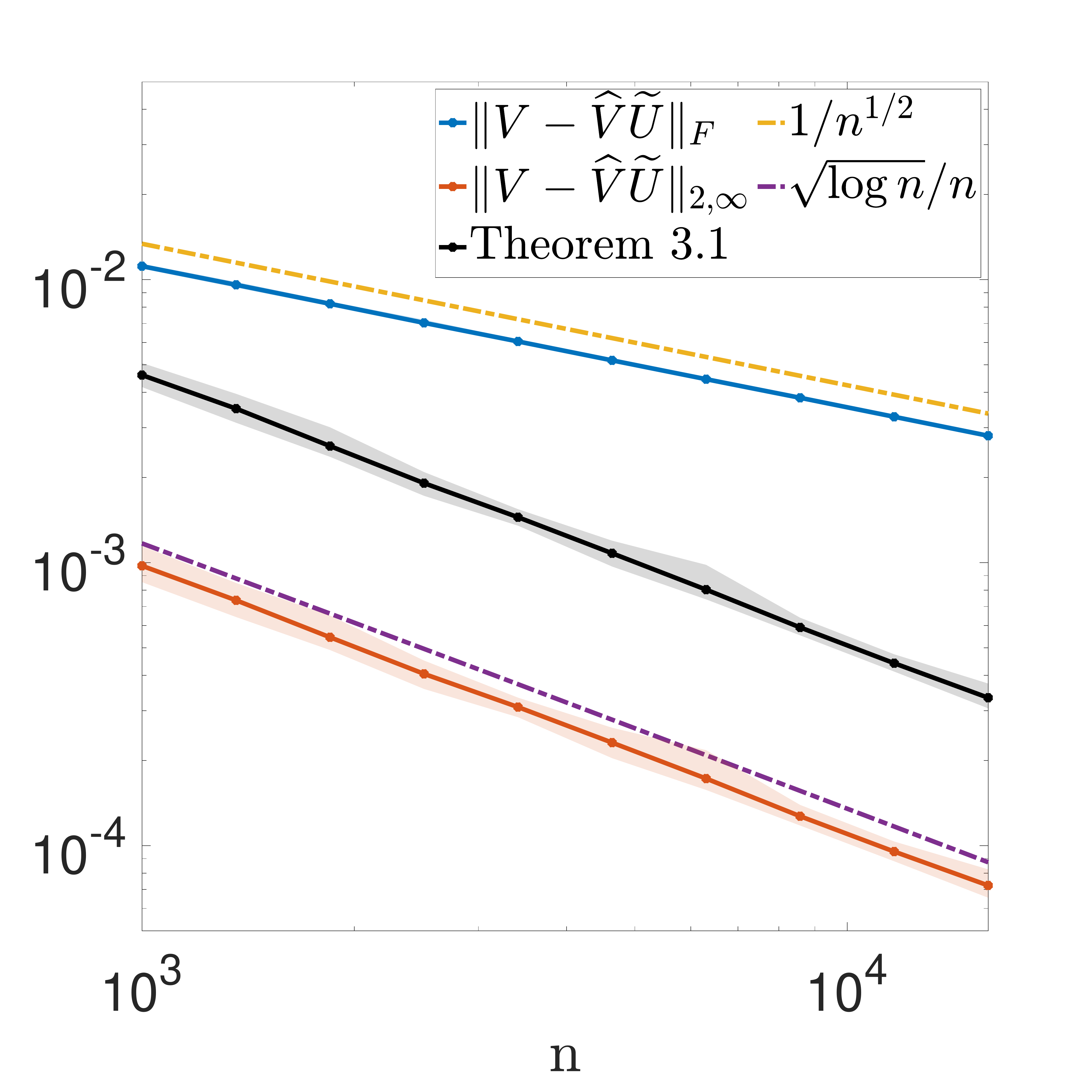}
	  \caption{$E = (1/n)Z$}
	  \label{fig:coherent_n12}
  \end{subfigure}
  \begin{subfigure}[t]{0.49\columnwidth}
	  \includegraphics[width=1\columnwidth]{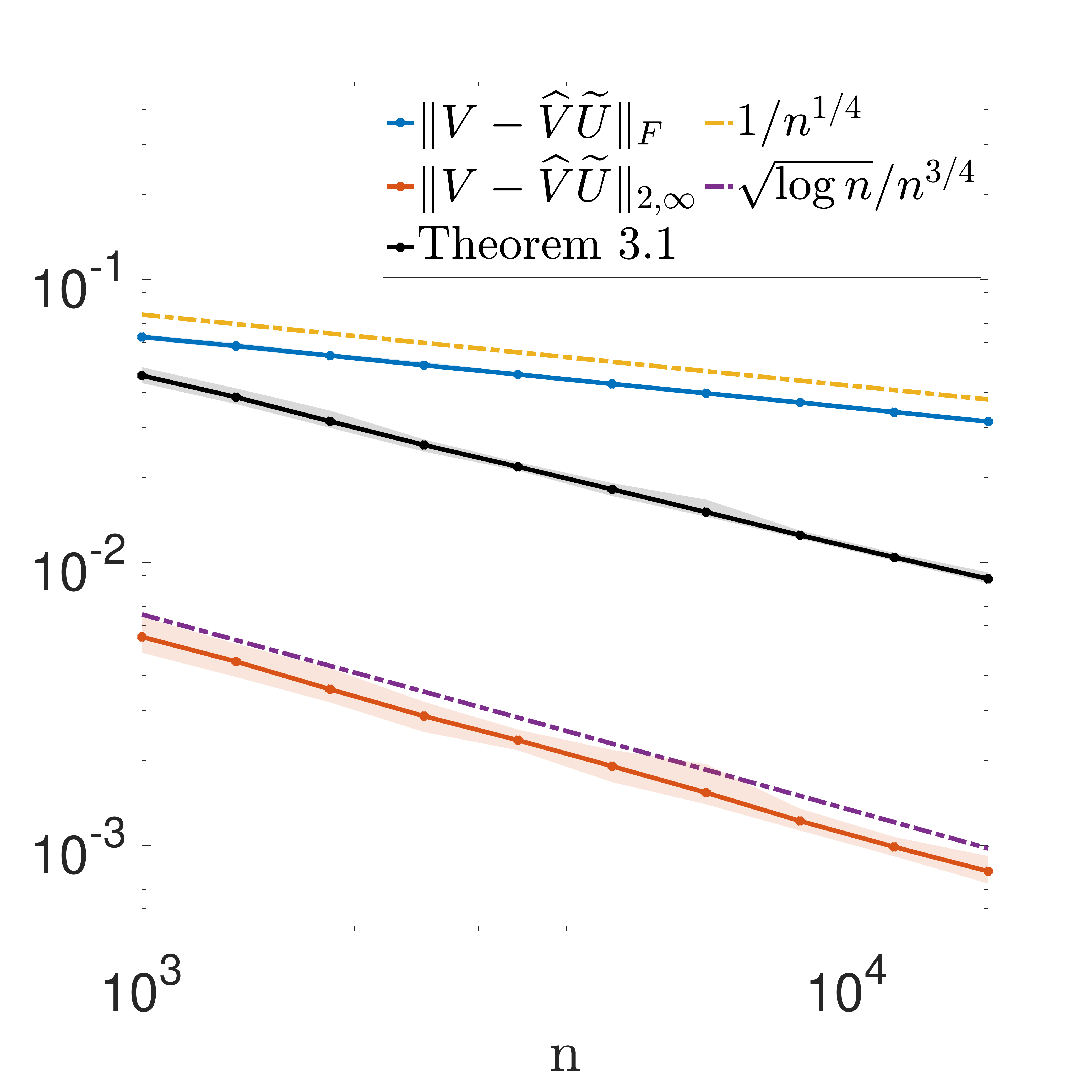}
	  \caption{$E = (1/n^{3/4})Z$}
	  \label{fig:coherent_n14}
  \end{subfigure}
  \caption{Asymptotic behavior of $\normtwoinf{\hV_1\tU - V_1}$ where $A$ is not low-rank and $E = \sigma Z$ for varying $\sigma$ (in this case $\mathbb{E}[\|E\|_2]\sim \sigma \sqrt{n}$), along with the upper bound given by Theorem~\ref{thm:main} for each instance of the problem. For each $n$ the experiment was repeated 30 times and we report the mean of the error and computed upper bounds. The shaded regions represent the area between the 0.05 and 0.95 quantiles of the respective lines. We see that the bounds provided by theorems~\ref{thm:main} and~\ref{thm:probG} are sharp (modulo a multiplicative constant). In the plot on the left, the slope of red line ($\normtwoinf{\hV_1\tU - V_1}$) matches the slope of the black line (bound provided by Theorem~\ref{thm:main}). In the plot on the right, the slope of the red line matches the slope of the purple line (rate provided by Theorem~\ref{thm:probG}). For reference we provide the more slowly converging quantity $\|\hV_1\tU - V_1\|_F,$ which is within constants of $\dist{(\hV_1,V_1)}$ and behaves as predicted by classical theory such as the Davis-Kahan Theorem~\cite{davis1970rotation}.}
\end{figure}

\section{Extensions of our bounds for non-normal matrices}
\label{sec:nonnormal}
We have constructed our bounds for symmetric matrices $A$ subject to arbitrary additive symmetric perturbations $E$. Nevertheless, they may be extended in several directions for non-normal $A$ and/or $E$ by considering more general invariant subspaces arising in the Schur forms of $A$ and $A+E.$ We briefly articulate how such extensions are readily obtained following the same proof strategy used for Theorem~\ref{thm:main}.

\subsection{Schur form subspaces}
Our results can be directly extended to the Schur form for non-normal matrices; notably, we also no longer require any assumptions on $E$. We now let
\begin{equation}
\label{eqn:SchurForm}
A = \begin{bmatrix}U_1 & U_2\end{bmatrix}\begin{bmatrix}T_{1,1} & T_{1,2} \\ 0 & T_{2,2}\end{bmatrix}\begin{bmatrix}U_1 & U_2\end{bmatrix}^*
\end{equation}
where $U_1\in\complex^{n\times r}$ and $U_2\in\complex^{n-r\times r}$ have orthonormal columns, and $T_{1,1}$ and $T_{2,2}$ are upper triangular. 

Adding one additional assumption about the norm of $T_{1,2}$ our results extend via Theorem~\ref{thm:schur} to the Schur form. For simplicity we have been loose with our assumptions on $\|E\|_2$ and $\|T_{1,2}\|_2$ and small improvements to the necessary constants are possible. Importantly, rather then $T_{1,2}$ showing up in the upper bounds it shows up in the assumptions\emthin thereby controlling the matrices for which this result is valid.
\begin{theorem}
\label{thm:schur}
Let $A\in\reals^{n\times n}$ have the Schur form
\[
A = \begin{bmatrix}U_1 & U_2\end{bmatrix}\begin{bmatrix}T_{1,1} & T_{1,2} \\ 0 & T_{2,2}\end{bmatrix}\begin{bmatrix}U_1 & U_2\end{bmatrix}^*
\]
following~\eqref{eqn:SchurForm} and let 
 $\gap = \min\{\sep_2(T_{1,1},T_{2,2}),\sep_{(2,\infty),U_2}(T_{1,1},U_2T_{2,2}U_2^*)\}.$ If \\$\|E\|_2 \le \frac{\gap}{10},$ and $\|T_{1,2}\|_2 \le \frac{\gap}{10}$ then there exists a matrix $\hY\in\complex^{n-r\times r}$ such that
\[
\hU_1 = (U_1 + U_2\hY)(I+\hY^*\hY)^{-1/2}
\]
forms an invariant subspace for $\hA$ satisfying
\begin{align*}
\min\{\normtwoinf{\hU_1Q - U_1}:Q\in\orth^r\} &\le 8\normtwoinf{U_1}\left(\frac{\|E\|_2}{\sep_2(T_{1,1},T_{2,2})}\right)^2\\&\phantom{\le}+ 2\frac{\normtwoinf{U_2U_2^*EU_1}}{\gap} + 4\frac{\normtwoinf{U_2U_2^*EU_2U_2^*}\|E_{2,1}\|_2}{\gap\times \sep_2(T_{1,1},T_{2,2})}. \nonumber
\end{align*}
\end{theorem}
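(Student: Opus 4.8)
The plan is to run the two-part argument behind Theorem~\ref{thm:main} essentially verbatim, tracking the single structural change caused by the nonzero Schur block $T_{1,2}$ and replacing transposes with conjugate transposes throughout. For the first part, parametrize candidate invariant subspaces of $\hA = A+E$ by
\[
\hU_1 = (U_1 + U_2\hY)(I + \hY^*\hY)^{-1/2}, \qquad \hU_2 = (U_2 - U_1\hY^*)(I + \hY\hY^*)^{-1/2},
\]
with $\hY\in\complex^{n-r\times r}$; as in the symmetric case these have orthonormal columns with complementary ranges (a short block computation using orthonormality of $[U_1\mid U_2]$ confirms this), and $\ran\hU_1$ is $\hA$-invariant if and only if $\hU_2^*\hA\hU_1 = 0$, i.e. $\hY$ is a root of $F(Y) = -\hA_{2,1} + Y\hA_{1,1} - \hA_{2,2}Y + Y\hA_{1,2}Y$, where $\hA_{i,j} = U_i^*\hA U_j$. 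The Schur form gives $U_2^*AU_1 = 0$, so $\hA_{2,1} = E_{2,1}$, but now $\hA_{1,2} = T_{1,2} + E_{1,2}$ rather than just $E_{1,2}$ — this is the only place the analysis differs from Theorem~\ref{thm:main} and the reason a smallness hypothesis on $\|T_{1,2}\|_2$ is imposed.

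Next I would prove a Schur-form analogue of Lemma~\ref{lem:NewtonKantorovichX}: starting the Newton iteration at $Y_0 = 0$ with linearization $S_{\hA}:Z\to Z\hA_{1,1} - \hA_{2,2}Z$, invoke the Newton--Kantorovich theorem (Theorem~\ref{thm:NewtonKantorovich}) with $\|S_{\hA}^{-1}\|_2 \le \sep_2(\hA_{1,1},\hA_{2,2})^{-1} \le (\sep_2(T_{1,1},T_{2,2}) - 2\|E\|_2)^{-1}$, Lipschitz constant $2\|\hA_{1,2}\|_2 \le 2(\|T_{1,2}\|_2 + \|E\|_2)$ for $F'$, and one-step residual $\|S_{\hA}^{-1}(E_{2,1})\|_2 \le \|E_{2,1}\|_2\,\sep_2(\hA_{1,1},\hA_{2,2})^{-1}$. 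Under $\|E\|_2 \le \gap/10$ and $\|T_{1,2}\|_2 \le \gap/10$ the Newton--Kantorovich hypothesis is met and yields a root $\hY$ with $\|\hY\|_2 \le 4\|E_{2,1}\|_2/\sep_2(T_{1,1},T_{2,2})$, so that $\ran\hU_1$ is an invariant subspace of $\hA$. Unlike Theorem~\ref{thm:main}, no Bauer--Fike step is needed here, since the conclusion only asserts that $\ran\hU_1$ is \emph{an} invariant subspace, not which one.

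For the second part, observe $\hU_1^*U_1 = (I + \hY^*\hY)^{-1/2}$ is Hermitian positive definite, so the unitary factor of its polar decomposition is the identity, the Frobenius Procrustes optimum is $Q = I$, and $\min_Q\normtwoinf{\hU_1 Q - U_1}\le\normtwoinf{\hU_1 - U_1}$. Split
\[
\hU_1 - U_1 = U_1\bigl((I + \hY^*\hY)^{-1/2} - I\bigr) + U_2\hY(I + \hY^*\hY)^{-1/2};
\]
the first term has $\normtwoinf{\cdot}$ at most $\tfrac12\normtwoinf{U_1}\|\hY\|_2^2 \le 8\normtwoinf{U_1}\bigl(\|E\|_2/\sep_2(T_{1,1},T_{2,2})\bigr)^2$, and the second is at most $\normtwoinf{U_2\hY}$. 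To bound $\normtwoinf{U_2\hY}$, set $\hZ = U_2\hY\in\ran U_2$, left-multiply the equation $F(\hY)=0$ by $U_2$ and use $U_2^*U_2 = I$ to get
\[
\hZ T_{1,1} - U_2T_{2,2}U_2^*\hZ = U_2E_{2,1} - \hZ E_{1,1} + U_2E_{2,2}U_2^*\hZ - \hZ(T_{1,2}+E_{1,2})U_2^*\hZ.
\]
Taking $\normtwoinf{\cdot}$, the left side is at least $\sep_{(2,\infty),U_2}(T_{1,1},U_2T_{2,2}U_2^*)\normtwoinf{\hZ}\ge\gap\normtwoinf{\hZ}$ because $\hZ\in\ran U_2$; bounding the right side term-by-term via the submultiplicative relations for $\normtwoinf{\cdot}$ (with $\|U_2^*\hZ\|_2 = \|\hY\|_2$ and the Newton--Kantorovich bound on $\|\hY\|_2$), the two terms carrying a factor $\normtwoinf{\hZ}$ on the right move to the left, where under the hypotheses their combined coefficient stays below $\gap/4$, leaving a coefficient at least $\tfrac34\gap$. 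Solving gives $\normtwoinf{\hZ}\le 2\normtwoinf{U_2U_2^*EU_1}/\gap + 4\normtwoinf{U_2U_2^*EU_2U_2^*}\|E_{2,1}\|_2/(\gap\,\sep_2(T_{1,1},T_{2,2}))$, which combined with the first-term bound yields the claim.

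\textbf{Main obstacle.} The conceptual content is the same as Theorem~\ref{thm:main}; the one genuinely new wrinkle is that the Schur off-diagonal block feeds into the Riccati quadratic term through $\hA_{1,2} = T_{1,2}+E_{1,2}$, which is precisely why $\|T_{1,2}\|_2$ must be assumed small and why the budget on $\|E\|_2$ tightens from $\gap/5$ to $\gap/10$. The delicate step is therefore the constant bookkeeping in the last paragraph: one must check that the extra $\|T_{1,2}\|_2$-dependent contributions both to the Newton--Kantorovich condition (paragraph two) and to the coefficient of $\normtwoinf{\hZ}$ are small enough to preserve the clean constants $8,2,4$. The remaining adaptation — verifying that the $\sep$-lemmas of Section~\ref{sec:prelim}, the Newton--Kantorovich theorem, and the polar/Procrustes identity all carry over from $\reals$ to $\complex$ — is routine and introduces nothing new.
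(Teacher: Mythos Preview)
Your proposal is correct and matches the paper's own proof, which is a terse one-paragraph sketch saying only that one reruns the argument of Theorem~\ref{thm:main} with $\hA_{1,2}=T_{1,2}+E_{1,2}$ in place of $E_{1,2}$ (so that $\|\hA_{1,2}\|_2\le\gap/5$ under the stated hypotheses) and with $T_{1,1},T_{2,2}$ in place of $\Lambda_1,\Lambda_2$. You have identified the same single structural change, the same reason the budget tightens to $\gap/10$, and you correctly note that the Bauer--Fike/dominance step is unnecessary here since the statement only claims \emph{an} invariant subspace.
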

\begin{proof}
Following the proof of the main result, in this setting the upper right block of~\eqref{eqn:AhatDiag} is no longer zero, but $\hV_1^T\hA\hV_2.$ However, our additional assumptions ensure that $\|\hA_{1,2}\|_2\leq \gap / 5.$ Therefore, the result follows from the same argument as Lemma~\ref{lem:NewtonKantorovichX} and Theorem~\ref{thm:main} where we simply use $\hA_{1,2}$ in place of $E_{1,2}$ and $T_{1,1}$ and $T_{2,2}$ in lieu of $\Lambda_1$ and $\Lambda_2.$ 
\end{proof}
\begin{remark}
While we have formulated Theorem~\ref{thm:schur} for general $A$ and $E,$ in the case where $A$ is normal the upper bound simplifies significantly (independent of any structural assumptions on $E$). In particular, $T_{1,1}$ and $T_{2,2}$ become diagonal and $T_{1,2}=0.$ This implies that when we consider invariant subspaces of $\hA$ for symmetric $A$ we can constructed qualitatively similar bounds regardless of whether or not $E$ is symmetric.
\end{remark}

\subsection{Singular vectors and subspaces}
More generally, and perhaps of more interest for non-normal matrices, analogous questions about subspace perturbations can be posed for singular subspaces. While omitted here, we believe the proof strategy employed in this work can be extended to develop similar bounds for pairs of singular subspaces. This assertion is based on the quadratic forms given in~\cite{stewart1973Error} for singular subspaces of $A+E,$ though we leave such developments for future work.

\section{Conclusions}
Throughout this manuscript we have developed bounds on $\normtwoinf{\hV_1-V_1\tU}$ that are characterized by easily interpretable quantities (such as $\normtwoinf{V_1}$) and rely on minimal assumptions. By additionally demonstrating that various aspects of our bounds are ``essential'' when allowing for arbitrary symmetric $A$ and $E$ we clearly show where the limits are for this problem absent additional assumptions. Nevertheless, this effort also provides a natural launching point for further analysis, as it points to the key assumptions that have to (or may) be made to further understand the behavior of $\normtwoinf{\hV_1-V_1\tU}$ in specific settings. One concrete example of this is the random setting explored in Section~\ref{sec:numerics}, where more refined control of $\normtwoinf{E\hY}$ is possible. Lastly, there are several ways in which our bounds show commonly made assumptions in prior work (such as incoherence of $A$ or certain assumptions on $V_2\Lambda_2V_2^T$) are unnecessary. The consequence of this is that our bounds are sharper in certain situations. Collectively, we believe that these qualities make our bounds useful and interpretable across a broad range of applications. 

\section*{Acknowledgments}
We would like to thank the anonymous referees for their many helpful suggestions that improved this manuscript.

\appendix
\section{Proofs on properties of separation}
\subsection{Proof of Lemma~\ref{lem:sep_diag}}
\label{sec:proofSepDiag}
First, we observe that because $\sep$ is shift invariant it suffices to prove the result for non-negative $D_1$ and $D_2.$ Therefore, we assume $D_1$ and $D_2$ have non-negative entries for the remainder of this proof. We now prove lower bounds for all three variants of $\sep.$

{\sc 2-norm.} For any $Z$ we let $U_Z\Sigma_ZV_Z^T$ denote its reduced SVD and note that since $\|Z\|_2=1$ we have that $\sigma_1 = 1.$ Now we observe that
\begin{align*}
\|ZD_1 - D_2Z\|_2 &\geq \|ZD_1\|_2 - \|D_2Z\|_2 \\
&\geq \lambda_{\min}(D_1) - \|D_2\|_2 \\
&\geq \lambda_{\min}(D_1) - \lambda_{\max}(D_2)
\end{align*}
where we have used that 
\begin{align*}
\|XD_1\|_2 &= \|\Sigma_Z V_Z^T D_1\|_2 \\
&\geq \|\Sigma_Z V_Z^T D_1 V_Ze_1\|_2 \\
&\geq \|\sigma_1 e_1^T V_Z^T D_1 V_Ze_1\|_2\\
&\geq \lambda_{\min}(D_1).
\end{align*}

{\sc Frobenius norm.}
For any $Z$ with unit Frobenius norm observe that 
\begin{align*}
\|ZD_1 - D_2Z\|_F &\geq \|ZD_1\|_F - \|D_2Z\|_F \\
&\geq \lambda_{\min}(D_1) - \|D_2Z\|_2 \\
&\geq \lambda_{\min}(D_1) - \lambda_{\max}(D_2).
\end{align*}

{\sc 2,$\infty$-norm.}
For any $Z$ with $\normtwoinf{Z} = 1$ there exists an index $k$ such that $\|e_k^TZ\|_2 = 1$ where $e_k\in\reals^{m}$ is a canonical basis vector. Now, observe that 
\begin{align*}
\normtwoinf{ZD_1 - D_2Z} &\geq \normtwoinf{ZD_1} - \normtwoinf{D_2Z}\\
&\geq \|e_k^TZD_1\|_2 - \normtwoinf{D_2Z}\\
&\geq \lambda_{\min}(D_1) - \lambda_{\max}(D_2),
\end{align*}
where the last inequality follows because $D_2$ represents a row scaling of $Z.$

Finally, let $j$ denote the column in which $\lambda_{\min}(D_1)$ arises and let $i$ denote the column in which $\lambda_{\max}(D_2)$ arises. Now, observe that 
\[
e_ie_j^TD_1 - D_2e_ie_j^T = \left(\lambda_{\min}(D_1) - \lambda_{\max}(D_1)\right)e_ie_j^T,
\]
where $e_i\in\reals^{m}$ and $e_j\in\reals^{\ell}$ are canonical basis vectors.
Since 
\[
\|e_ie_j^T\|_2 = \|e_ie_j^T\|_F = \normtwoinf{e_ie_j^T} = 1
\]
we achieve the aforementioned lower bounds in all cases and thereby conclude the proof.

\section{The Newton-Kantorovich theorem}

This is the version of the the Newton-Kantorovich theorem we appeal to. It appears in \cite[pp 536]{kantorovich1982Functional}.

\begin{theorem}
\label{thm:NewtonKantorovich}
Let $X,Y$ be Banach spaces and $F:X\to Y$ be twice-continuously (Frechet) differentiable in a neighborhood of $U$ of $x\in X$. Assume there is a linear map $J:X\to Y$ such that $S_A^{-1}$ is bounded and satisfies
\begin{enumerate}
\item $\|J^{-1}(F(x))\| \le \eta$,
\item $\|J^{-1}\circ \partial F(x) - I\| \le \delta$,
\item $\|J^{-1}\circ \partial^2F(y)\| \le K$ for all $y\in U$.
\end{enumerate}
If $\delta < 1$ and $h := \frac{\eta K}{(1 - \delta)^2} < \frac12$, then the sequence $(x_t)$ defined recursively as
\[
\begin{aligned}
x_0 &\gets x \\
x_{t+1} &\gets x_t - J^{-1}(F(x_t))
\end{aligned}
\]
converges to 
$\bar{x}\in X$ such that $F(\bar{x}) = 0$ and 
\[
\|\bar{x} - x\| \le \frac{2\eta}{(1-\delta)(1 + \sqrt{1 - 2h})}.
\]
\end{theorem}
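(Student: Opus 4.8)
The statement is the classical Newton--Kantorovich theorem for the modified (fixed-operator, or chord) Newton iteration $x_{n+1} = x_n - J^{-1}F(x_n)$, and my plan is to prove it by the standard \emph{majorant} argument: dominate the Banach-space iterates by a scalar sequence attached to a quadratic whose smaller root is exactly the asserted error bound. (I write $n$ for the iteration index to free up $t$ for the scalar variable.)

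\textbf{Step 1: the scalar majorant.} I would introduce $\phi(t) = \frac{K}{2}t^2 - (1-\delta)t + \eta$, whose discriminant is $(1-\delta)^2 - 2K\eta = (1-\delta)^2(1-2h)$; the hypotheses $\delta<1$ and $h<\frac12$ make this positive, so $\phi$ has roots $0 < t_- \le t_+$ with
\[
t_- = \frac{(1-\delta)\bigl(1-\sqrt{1-2h}\bigr)}{K} = \frac{2\eta}{(1-\delta)\bigl(1+\sqrt{1-2h}\bigr)},
\]
and $\phi>0$, $\phi'<0$ on $[0,t_-)$. Define $t_0 = 0$ and $t_{n+1} = t_n + \phi(t_n)$. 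Convexity of $\phi$ gives $0 = \phi(t_-) \ge \phi(t_n) + \phi'(t_n)(t_- - t_n)$, hence $\phi(t_n) \le (1-\delta)(t_- - t_n)$ for $0 \le t_n \le t_-$, from which a short induction shows $\{t_n\}$ is nondecreasing and bounded above by $t_-$; thus it converges to a root of $\phi$ that is $\le t_-$, i.e. to $t_-$.

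\textbf{Step 2: majorization.} Set $a_n = \|x_{n+1}-x_n\|$ and $s_n = \|x_n - x_0\|$; hypothesis (1) gives $a_0 = \|J^{-1}F(x_0)\| \le \eta$. Using $J(x_{n+1}-x_n) = -F(x_n)$, I would write $F(x_{n+1}) = \bigl[F(x_{n+1}) - F(x_n) - F'(x_n)(x_{n+1}-x_n)\bigr] + \bigl[F'(x_n) - J\bigr](x_{n+1}-x_n)$, apply $J^{-1}$, bound the Taylor remainder by $\frac{K}{2}a_n^2$ via hypothesis (3), and bound the second term by $\|J^{-1}F'(x_n) - I\|\,a_n \le (\delta + K s_n)\,a_n$, using $\|J^{-1}F'(x_0) - I\|\le\delta$ (hypothesis (2)) together with $\|J^{-1}(F'(x_n)-F'(x_0))\| \le K s_n$ (hypothesis (3)). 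This yields $a_{n+1} \le \frac{K}{2}a_n^2 + (\delta + Ks_n)a_n$. The crucial observation is that the scalar differences $b_n := t_{n+1}-t_n$ obey $b_n = \phi(t_n)$ and, after expanding $\phi(t_n + b_n)$, $b_{n+1} = \frac{K}{2}b_n^2 + (\delta + Kt_n)b_n$ with $b_0 = \eta$ --- precisely the scalar analogue of the bound on $a_{n+1}$ --- so a one-line induction gives $a_n \le b_n$ and $s_n \le t_n$ simultaneously.

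\textbf{Step 3: conclusion.} Since $a_n \le b_n$ and $\sum_n b_n = \lim_n t_n = t_-$, the $x_n$ form a Cauchy sequence with limit $\bar x$ satisfying $\|\bar x - x\| \le t_- = \frac{2\eta}{(1-\delta)(1+\sqrt{1-2h})}$; and since $J^{-1}F(x_n) = -(x_{n+1}-x_n)\to 0$ with $J^{-1}F$ continuous, $J^{-1}F(\bar x) = 0$, so $F(\bar x) = 0$ by injectivity of $J^{-1}$. I expect the main obstacle to be the bookkeeping in Step 2: making the scalar recursion mirror the operator one exactly (the identity $b_n = \phi(t_n)$ is what lets the induction close with no slack), and ensuring that the segments on which hypothesis (3) is invoked lie in $U$ --- i.e. that $U$ contains $\bar B(x, t_-)$, which is where the smallness conditions $\delta < 1$ and $h < \frac12$ are actually consumed. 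The remaining work is routine Taylor estimation.
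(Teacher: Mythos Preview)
Your majorant argument is correct and is the standard proof of this version of the Newton--Kantorovich theorem; the scalar quadratic $\phi(t)=\tfrac{K}{2}t^2-(1-\delta)t+\eta$, the fixed-point iteration $t_{n+1}=t_n+\phi(t_n)$, and the simultaneous induction $a_n\le b_n$, $s_n\le t_n$ all close exactly as you indicate. One small remark: you also need $J$ (not only $J^{-1}$) bounded so that $J^{-1}F(\bar x)=0$ implies $F(\bar x)=0$, but in the paper's application $J$ is a bounded Sylvester operator, so this is harmless.

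As for comparison: the paper does not supply its own proof of this theorem. It is stated in an appendix as a quoted result, with the line ``It appears in \cite[pp 536]{kantorovich1982Functional},'' and is then invoked as a black box in the proof of Lemma~\ref{lem:NewtonKantorovichX}. So there is no ``paper's proof'' to compare against; your write-up is a complete proof where the paper offers only a citation.
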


\subsection{Proof of Lemma \ref{lem:NewtonKantorovichX}}
\label{secA:NKX}
We start by evaluating the derivatives of $F$:
\[
\begin{aligned}
\partial F(0):X \to X\hA_{1,1} - \hA_{2,2}X, \\
\partial^2F(X):X_1,X_2 \to X_1\hA_{1,2}X_2.
\end{aligned}
\]
Recognizing $\|S_{\hA}^{-1}\|_2 = \frac{1}{\sep_2(\hA_{1,1},\hA_{2,2})}$ and $F(0) = E_{2,1}$, we have
\[
\begin{aligned}
\|S_{\hA}^{-1}(F(0))\|_2 &\le \frac{\|E_{2,1}\|_2}{\sep_2(\hA_{1,1},\hA_{2,2})},
\end{aligned}
\]
so the first condition of Theorem \ref{thm:NewtonKantorovich} is satisfied by $\eta = \frac{\|E_{2,1}\|_2}{\sep_2(\hA_{1,1},\hA_{2,2})}$. We recognize $S_{\hA} = \partial F(0)$, so the second condition of Theorem \ref{thm:NewtonKantorovich} is satisfied by $\delta = 0$. Finally, we have
\[
\begin{aligned}
\|S_{\hA}^{-1}(\partial^2F(0)(X_1,X_2))\|_2 &\le \frac{\|X_1\hA_{1,2}X_2\|_2}{\sep_2(\hA_{1,1},\hA_{2,2})} \le \frac{\|X_1\|_2\|E_{1,2}\|_2\|X_2\|_2}{\sep_2(\hA_{1,1},\hA_{2,2})},
\end{aligned}
\]
so the third condition of Theorem \ref{thm:NewtonKantorovich} is satisfied by $K = \frac{\|E_{1,2}\|_2}{\sep_2(\hA_{1,1},\hA_{2,2})}$. From Proposition 2.1 of \cite{karow2014Perturbation} we then have that
\[
\begin{aligned}
\sep_2(\hA_{1,1},\hA_{2,2}) &\ge \sep_2(\Lambda_1 + E_{1,1},\Lambda_2 + E_{2,2}) \\
&= \sep_2(\Lambda_1,\Lambda_2) - \|E_{1,1}\|_2 - \|E_{2,2}\|_2 \\
&\ge \sep_2(\Lambda_1,\Lambda_2)- 2\|E\|_2.
\end{aligned}
\]
We combine this bound on $\sep_2(\hA_{1,1},\hA_{2,2})$ with the condition $\|E\|_2 \le \frac{\sep_2(\Lambda_1,\Lambda_2)}{4}$ to obtain
\[
\begin{aligned}
\eta = \frac{\|E_{2,1}\|_2}{\sep_2(\hA_{1,1},\hA_{2,2})} \le \frac{\|E_{2,1}\|_2}{\sep_2(\Lambda_1,\Lambda_2) - 2\|E\|_2} \le \frac{2\|E_{2,1}\|_2}{\sep_2(\Lambda_1,\Lambda_2)}, \\
h = \frac{\eta K}{(1 - \delta)^2} = \frac{\|E_{2,1}\|_2\|E_{1,2}\|_2}{\sep_2(\hA_{1,1},\hA_{2,2})^2} \le \frac{\|E\|_2^2}{(\sep_2(\Lambda_1,\Lambda_2) - 2\|E\|_2)^2} \le \frac14 < \frac12,
\end{aligned}
\]
so the NK theorem implies $F$ has a root $\hX$ such that
\[
\|\hX\|_2 \le \frac{2\eta}{(1-\delta)(1 + \sqrt{1 - 2h})} < 2\eta \le \frac{4\|E_{2,1}\|_2}{\sep_2(\Lambda_1,\Lambda_2)}
\]
as claimed.

\section{Multiple perturbations}
Here, we characterize how changes in the invariant subspaces may be controlled when we have multiple perturbations of the matrix $A$. Our primary use of this result is in developing Theorem~\ref{thm:probG}. Specifically, when $E$ is drawn from a random model satisfying certain independence assumptions we can use multiple draws of $E$ to develop tighter bounds on the changes in $V_1$ generated by any single instance of $E.$

\begin{lemma}
\label{lem:prob}
Let $A\in\reals^{n\times n}$ be symmetric with eigen-decomposition 
\[
A = V_1\Lambda_1V_1^T +V_2\Lambda_2V_2^T
\] 
following our conventions in~\eqref{eqn:eigen-decomp} and  
\[
\gap = \min\{\sep_2(\Lambda_1,\Lambda_2),\sep_{(2,\infty),V_2}(\Lambda_1,V_2\Lambda_2V_2^T)\}.
\] 
Consider two symmetric perturbations $E$ and $\tE$ satisfying $E_{1,2} = \tE_{1,2},$ $E_{1,1} = \tE_{1,1},$ $\|E\|_2\leq\frac{\gap}{5},$ and $\|\tE\|_2\leq\frac{\gap}{5}.$ In this setting
\begin{equation*}
\begin{aligned}
\normtwoinf{\hV_1\tU - V_1} &\le 8\normtwoinf{V_1}\left(\frac{\|E_{2,1}\|_2}{\sep_2(\Lambda_1,\Lambda_2)}\right)^2\\&\phantom{\le}+ 2\frac{\normtwoinf{V_2E_{2,1}}}{\gap} + 4\frac{\normtwoinf{V_2E_{2,2}V_2^T\tY}}{\gap} \\&\phantom{=} + 5\frac{(\|E_{2,2}\|_2+\|\tE_{2,2}\|_2)\normtwoinf{V_2E_{2,2}V_2^T}\|E_{2,1}\|_2}{\gap\times\sep_{2}(\Lambda_1,\Lambda_2)^2}\\&\phantom{=} + 10\frac{\normtwoinf{V_2E_{2,2}V_2^T}\|E_{2,1}\|_2^3}{\gap\times\sep_{2}(\Lambda_1,\Lambda_2)^3},
\end{aligned}
\end{equation*}
where $\hV_1$ is any matrix with orthonormal columns whose range is the dominant $r$-dimensional invariant subspace of $A+E,$ $\tU$ solves the orthogonal Procrustes problem
\begin{equation*}
\min\{\|\hV_1U - V_1\|_F:U\in\orth^r\}.
\end{equation*}
In addition, $\tY\in\reals^{n\times r}$ is in the range of $V_2,$ is a root of
\[
G:Y\to -V_2\tA_{2,1} + Y\tA_{1,1} - V_2\tA_{2,2}V_2^TY + Y\tA_{1,2}V_2^TY
\]
where $\tA = A+\tE,$ and satisfies $\|\tY\|_2\leq \frac{4 \|E_{2,1}\|_2}{\sep_2(\Lambda_1,\Lambda_2)}.$
\end{lemma}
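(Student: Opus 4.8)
The plan is to run the argument behind Theorem~\ref{thm:main} for the perturbation $E$ and then transfer the single data-dependent term that couples to $\hY$ over to the auxiliary quantity $\tY$ attached to $\tE$, paying only higher-order corrections. Applying Lemma~\ref{lem:NewtonKantorovichX} and Lemma~\ref{lem:Y} to $\hA = A + E$ gives, with $\hY = V_2\hX$ for the Newton-iteration root $\hX$ of $F$ and $\|\hY\|_2 \le 4\|E_{2,1}\|_2/\sep_2(\Lambda_1,\Lambda_2)$,
\[
\normtwoinf{\hV_1\tU - V_1} \le 8\normtwoinf{V_1}\left(\frac{\|E_{2,1}\|_2}{\sep_2(\Lambda_1,\Lambda_2)}\right)^2 + 2\frac{\normtwoinf{V_2E_{2,1}} + \normtwoinf{V_2E_{2,2}V_2^T\hY}}{\gap}.
\]
In parallel, Lemma~\ref{lem:NewtonKantorovichX} applied to $\tA = A + \tE$ yields the root $\tY\in\ran V_2$ of $G$ built from $\tA$; since $\tE$ is symmetric and $\tE_{1,2} = E_{1,2}$ we have $\tE_{2,1} = \tE_{1,2}^T = E_{1,2}^T = E_{2,1}$, so the hypotheses of Lemma~\ref{lem:NewtonKantorovichX} hold for $\tA$ and $\|\tY\|_2 \le 4\|E_{2,1}\|_2/\sep_2(\Lambda_1,\Lambda_2)$, which establishes the stated properties of $\tY$. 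Everything then reduces to comparing $\normtwoinf{V_2E_{2,2}V_2^T\hY}$ with $\normtwoinf{V_2E_{2,2}V_2^T\tY}$.

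For the comparison I would set $\Delta = \hY - \tY$ and subtract the two quadratic equations defining $\hY$ and $\tY$. Using the shared data $\hA_{1,1} = \Lambda_1 + E_{1,1} = \Lambda_1 + \tE_{1,1} = \tA_{1,1}$, $E_{1,2} = \tE_{1,2}$, and $E_{2,1} = \tE_{2,1}$, all the common terms cancel and $\Delta$ satisfies the Sylvester-type identity
\[
\Delta\hA_{1,1} - V_2\Lambda_2V_2^T\Delta = \Delta E_{1,2}V_2^T\hY + \tY E_{1,2}V_2^T\Delta + V_2E_{2,2}V_2^T\Delta + V_2(E_{2,2} - \tE_{2,2})V_2^T\tY,
\]
whose only term not proportional to $\Delta$ is the last. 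Taking spectral norms, lower-bounding the left side by $(\sep_{2,V_2}(\Lambda_1,V_2\Lambda_2V_2^T) - \|E_{1,1}\|_2)\|\Delta\|_2 = (\sep_2(\Lambda_1,\Lambda_2) - \|E_{1,1}\|_2)\|\Delta\|_2$ via Lemma~\ref{lem:sepU_orth}, upper-bounding the right side with the submultiplicative relations together with $\|\hY\|_2,\|\tY\|_2 \le 4\|E_{2,1}\|_2/\sep_2(\Lambda_1,\Lambda_2)$, and absorbing the $\Delta$-proportional contributions using $\|E\|_2 \le \gap/5 \le \sep_2(\Lambda_1,\Lambda_2)/5$, gives a control on $\|\Delta\|_2$ of size essentially $(\|E_{2,2}\|_2 + \|\tE_{2,2}\|_2)\|E_{2,1}\|_2/\sep_2(\Lambda_1,\Lambda_2)^2$, together with residual higher-order pieces generated by the quadratic terms $\Delta E_{1,2}V_2^T\hY + \tY E_{1,2}V_2^T\Delta$. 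Substituting $\normtwoinf{V_2E_{2,2}V_2^T\hY} \le \normtwoinf{V_2E_{2,2}V_2^T\tY} + \normtwoinf{V_2E_{2,2}V_2^T}\|\Delta\|_2$ back into the Lemma~\ref{lem:Y} bound and collecting terms produces the claimed inequality.

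I expect the main obstacle to be the bookkeeping in the $\|\Delta\|_2$ estimate and in the final assembly of constants. One has to ensure that no factor of $E_{2,2}$ leaks into what should become the ``clean'' term $\normtwoinf{V_2E_{2,2}V_2^T\tY}$---this separation is the whole point of the coupling, since in the probabilistic setting of Theorem~\ref{thm:probG} the matrix $\tY$ is independent of $E_{2,2}$---and the higher-order contributions must be split precisely so that the linear-in-$\|E_{2,1}\|_2$ part of the source $V_2(E_{2,2} - \tE_{2,2})V_2^T\tY$ is peeled off from the cubic residue arising from the $\Delta$-coupled quadratic terms, so that the explicit constants in the statement (the $5$, the $10$, and the promotion of the coefficient on $\normtwoinf{V_2E_{2,2}V_2^T\tY}$ from $2$ to $4$) emerge as written rather than merely up to absolute constants.
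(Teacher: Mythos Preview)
Your proposal is correct and follows essentially the same route as the paper: invoke Lemma~\ref{lem:Y} for $E$, construct $\tY$ via Lemma~\ref{lem:NewtonKantorovichX} for $\tE$, split $\normtwoinf{V_2E_{2,2}V_2^T\hY}$ into the $\tY$-piece plus $\normtwoinf{V_2E_{2,2}V_2^T}\|\hY-\tY\|_2$, and control $\|\hY-\tY\|_2$ by subtracting the two quadratic equations.

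The one place you diverge slightly is in handling the subtracted equation. The paper does \emph{not} regroup the right-hand side into $\Delta$-proportional pieces and absorb them; it leaves the right side as $V_2E_{2,2}V_2^T\hY - V_2\tE_{2,2}V_2^T\tY - \hY E_{1,2}V_2^T\hY + \tY E_{1,2}V_2^T\tY$ and bounds each term separately using $\|\hY\|_2,\|\tY\|_2\le 4\|E_{2,1}\|_2/\sep_2(\Lambda_1,\Lambda_2)$. This is what produces the two-term bound $4(\|E_{2,2}\|_2+\|\tE_{2,2}\|_2)\|E_{2,1}\|_2/\sep_2 + 8\|E_{2,1}\|_2^3/\sep_2^2$ and hence both the linear and the cubic contributions in the lemma. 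Your absorption strategy is in fact sharper\emthin carried through, it would give $\|\Delta\|_2 \lesssim (\|E_{2,2}\|_2+\|\tE_{2,2}\|_2)\|E_{2,1}\|_2/\sep_2^2$ with no separate cubic term\emthin but then the ``residual higher-order pieces'' you mention would not appear at all, and you would prove a slightly stronger inequality than the one stated (at the cost of possibly different absolute constants). If your goal is to reproduce the statement exactly, simply follow the paper's cruder bound on the right-hand side rather than absorbing; if you absorb, be aware the cubic term becomes redundant.
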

\begin{proof}
First, the assumption $\|E\|_2 \leq \frac{\gap}{5}$ allows us to invoke Lemma~\ref{lem:Y} and deduce the first two terms in the bound directly. To construct the last three terms we bound
\[
4\frac{\normtwoinf{V_2E_{2,2}V_2^T\hY}}{\gap}
\]
in a more nuanced manner than the na\"ive sub-multiplicative bound used in Section~\ref{sec:main}. 

Letting $\tA = A+\tE$ we may invoke Lemma~\ref{lem:NewtonKantorovichX} to assert that there exists $\tX\in\reals^{(n-r)\times r}$ such that
\[
\tV_1 = (V_1+V_2\tX)(I_r+\tX^T\tX)^{-\frac12}
\]
is an orthonormal basis for the dominant $r$-dimensional invariant subspace of $\tA$ and $\|\tX\|_2\leq \frac{4 \|E_{2,1}\|_2}{\sep_2(\Lambda_1,\Lambda_2)}.$ As before, $\tX$ is a root of 
\[
\begin{aligned}
F:X\to -\tA_{2,1} + X\tA_{1,1} - \tA_{2,2}X + X\tA_{1,2}X
\end{aligned}
\]
and therefore $\tY = V_2\tX$ is a root of
\[
G:Y\to -V_2\tA_{2,1} + Y\tA_{1,1} - V_2\tA_{2,2}V_2^TY + Y\tA_{1,2}V_2^TY.
\]

We now proceed by observing that
\[
\begin{aligned}
4\frac{\normtwoinf{V_2E_{2,2}V_2^T\hY}}{\gap} &\leq 4\frac{\normtwoinf{V_2E_{2,2}V_2^T\tY}}{\gap} + 4\frac{\normtwoinf{V_2E_{2,2}V_2^T(\hY-\tY)}}{\gap}\\
&\leq 4\frac{\normtwoinf{V_2E_{2,2}V_2^T\tY}}{\gap} + 4\frac{\normtwoinf{V_2E_{2,2}V_2^T}\|\hY-\tY\|_2}{\gap}.
\end{aligned}
\]

Control over $\|\hY-\tY\|_2$ is achieved by observing that $\tY$ and $\hY$ satisfy
\[
0 = -V_2\hA_{2,1} + \hY\hA_{1,1} - V_2\hA_{2,2}V_2^T\hY + \hY\hA_{1,2}V_2^T\hY
\]
and
\[
0 = -V_2\tA_{2,1} + \tY\tA_{1,1} - V_2\tA_{2,2}V_2^T\tY + \tY\tA_{1,2}V_2^T\tY.
\]
In particular, since $\tA_{i,j} = \hA_{i,j}$ for all $i,j$ pairs except $i=j=2$ subtracting the equations yields 
\[
0 = (\hY-\tY)\hA_{1,1} - V_2\hA_{2,2}V_2^T\hY + V_2\tA_{2,2}V_2^T\tY + \hY\hA_{1,2}V_2^T\hY - \tY\tA_{1,2}V_2^T\tY,
\]
and via further rearrangement
\[
\begin{aligned}
(\hY-\tY)\hA_{1,1} - V_2A_{2,2}V_2^T(\hY-\tY) &= V_2E_{2,2}V_2^T\hY - V_2\tE_{2,2}V_2^T\tY \\
&\phantom{=} - \hY\hA_{1,2}V_2^T\hY + \tY\tA_{1,2}V_2^T\tY.
\end{aligned}
\]
Taking norms we observe that the left hand side is at least
\begin{equation}
\frac{4}{5}\sep_{2}(\Lambda_1,\Lambda_2)\|(\hY-\tY)\|_2 \leq \|(\hY-\tY)\hA_{1,1} - V_2A_{2,2}V_2^T(\hY-\tY)\|_2
\label{eqn:probLHS}
\end{equation}
where we have used that $\|E_{1,1}\|_2\leq \sep_{2}(\Lambda_1,\Lambda_2)/5.$ We may bound the right hand side via the triangle inequality and repeated used of Lemma~\ref{lem:NewtonKantorovichX} as
\begin{equation}
\begin{aligned}
\|V_2E_{2,2}V_2^T\hY - V_2\tE_{2,2}V_2^T\tY &- \hY\hA_{1,2}V_2^T\hY + \tY\tA_{1,2}V_2^T\tY\|_2 \leq \\&4\frac{(\|E_{2,2}\|_2+\|\tE_{2,2}\|_2)\|E_{2,1}\|}{\sep_{2}(\Lambda_1,\Lambda_2)} + 8\frac{\|E_{2,1}\|_2^3}{\sep_{2}(\Lambda_1,\Lambda_2)^2},
\end{aligned}
\label{eqn:probRHS}
\end{equation}
where we have also used that $A_{1,2} = 0.$ Combining~\eqref{eqn:probLHS} and~\ref{eqn:probRHS} yields the desired upper bound.
\end{proof}

\bibliographystyle{siam}
\bibliography{YK}

\end{document}